% !!!IMPORTANT NOTE: Please read carefully all information including those preceded by % sign
%Before you compile the tex file please download the class file AIMS.cls from the following URL link to the
%local folder where your tex file resides. http://aimsciences.org/journals/tex-sample/AIMS.cls.
\documentclass{nhm}
\usepackage{amsmath}
\usepackage{amsthm}
\usepackage{amssymb}  % assumes amsmath package installed
\usepackage{amsfonts}
\usepackage{paralist}
\usepackage{graphics} %% add this and next lines if pictures should be in esp format
\usepackage{epsfig} %For pictures: screened artwork should be set up with an 85 or 100 line screen
\usepackage{epstopdf} %This is to transfer .eps figure to .pdf figure; please compile your paper using PDFLeTex or PDFTeXify.
\usepackage[colorlinks=true]{hyperref}
   % Warning: when you first run your tex file, some errors might occur,
   % please just press enter key to end the compilation process, then it will be fine if you run your tex file again.
   % Note that it is highly recommended by AIMS to use this package.
\hypersetup{urlcolor=blue, citecolor=red}
\usepackage{graphicx, tikz}
\usepackage[english]{babel}  % force American English hyphenation patterns
\usepackage{wrapfig}
\DeclareGraphicsExtensions{.pdf,.png,.jpg}
\usepackage{epsfig} % for postscript graphics files
\usepackage{float}
\usepackage{arydshln}
\usepackage{enumitem}
\usepackage[pagewise]{lineno}

 % Please minimize the usage of "newtheorem", "newcommand", and use
 % equation numbers only situation when they provide essential convenience
 % Try to avoid defining your own macros

\newtheorem{theorem}{Theorem}[section]

\newtheorem{lemma}[theorem]{Lemma}
\newtheorem{proposition}{Proposition}

\theoremstyle{definition}
\newtheorem{example}{Example}[section]

\newtheorem{remark}{Remark}

\newtheorem{algorithm}{Algorithm}
\newsavebox{\mycases}

\def\R{\mathbb{R}}
\DeclareMathOperator{\diag}{diag}
\DeclareMathOperator{\rank}{rank}

%%%%%%%%%%%%%%%%%%%%%%  TIKZ  %%%%%%%%%%%%%%%%%%%%%%%%%%%%%%%%%%%%%%%%%%%%%%%%%%
\tikzstyle{metab} = [circle, text centered, draw=black, fill=gray!20]
\tikzstyle{arrow} = [thick,->,>=stealth]
\tikzstyle{vecArrow} = [thin]
\usetikzlibrary{shapes.geometric, arrows, decorations.markings}
\tikzstyle{metab1} = [circle, minimum width=2cm, minimum height=2cm,text centered, draw=black, fill=blue!30]
\tikzstyle{arrow} = [thick,->,>=stealth]
\tikzstyle{vecArrow} = [thin]
  \textheight=8.2 true in
   \textwidth=5.0 true in
    \topmargin 30pt
     \setcounter{page}{1}

% The next 5 line will be entered by an editorial staff.

%% Place the running title of the paper with 40 letters or less in []
%% and the full title of the paper in { }.
\title[Stability via Linear-In-Flux-Expression]
      {Stability of metabolic networks via Linear-in-Flux-Expressions}

% Place all authors' names in [ ] shown as running head;
% No more than 40 letters. Leave { } empty
% Please use `and' to connect the last two names if applicable
%\author[Nathaniel J. Merrill, Zheming An, Sean T. McQuade, Federica Garin, Karim Azer, Ruth E. Abrams and Benedetto Piccoli]{}
%\renewcommand{\shortauthors}{N. J. Merrill, Z. An, S. T. McQuade, F. Garin, K. Azer, R. E. Abrams and B. Piccoli}
\author[Merrill, An, McQuade, Garin, Azer, Abrams and Piccoli]{}
% It is required to enter 2010 MSC.
\subjclass{}
% Please provide minimum 5 keywords.
 \keywords{Systems biology, flows in graphs, systems theory, control, ordinary differential equations}

% Email address of each of all authors is required.
% You may list email addresses of all other authors, separately.
 \email{nathaniel.j.merrill@rutgers.edu}
 \email{zheming.an@rutgers.edu}
 \email{sean.mcquade@rutgers.edu}
 \email{federica.garin@inria.fr}
 %\email{azerkarim@gmail.com}
 \email{karim.azer@gatesmri.org}
 %\email{karim.azer@sanofi.com}
 \email{ruth.abrams@sanofi.com}
 \email{piccoli@camden.rutgers.edu}

% Put your short thanks below. For long thanks/acknowlegements,
%please go to the last acknowlegments section.
\thanks{$^*$ Corresponding author: Benedetto Piccoli.}

\begin{document}
\maketitle

% Enter the first author's name and address:

\centerline{\scshape Nathaniel J. Merrill, Zheming An and Sean T. McQuade}
\medskip
{\footnotesize
 % please put the address of the second  and third author
 \centerline{ Center for Computational and Integrative Biology}
   \centerline{Rutgers Camden. Camden NJ, USA}
   %\centerline{Springfield, MO 65810, USA}
}

\medskip

\centerline{\scshape Federica Garin}
\medskip
{\footnotesize
% please put the address of the first author
 \centerline{Researcher}
   \centerline{Univ.\ Grenoble Alpes, Inria, CNRS, Grenoble INP}
   \centerline{GIPSA-lab. Grenoble, France}
} % Do not forget to end the {\footnotesize by the sign }

\medskip

\centerline{\scshape Karim Azer}
\medskip
{\footnotesize
 % please put the address of the second  and third author
 %\centerline{ Head of Quantitative Systems Pharmacology (QSP) and DMPK Modeling}
  \centerline{ Bill \& Melinda Gates Medical Research Institute} 
  \centerline{245 Main Street Kendall Square. Cambridege, MA 02142}
 %\centerline{ Senior Director Systems Pharmacology, Translational Informatics}
   %\centerline{ Sanofi. Bridgewater NJ, USA}
   %\centerline{Springfield, MO 65810, USA}
}

\medskip

\centerline{\scshape Ruth E. Abrams}
\medskip
{\footnotesize
 % please put the address of the second  and third author
 \centerline{ Senior scientist, Translational Informatics}
   \centerline{ Sanofi. Bridgewater NJ, USA}
   %\centerline{Springfield, MO 65810, USA}
}

\medskip

\centerline{\scshape Benedetto Piccoli$^*$}
\medskip
{\footnotesize
% please put the address of the first author
 \centerline{Joseph and Loretta Lopez chair professor of Mathematics}
   \centerline{Center for Computational and Integrative Biology}
   \centerline{Rutgers Camden. Camden NJ, USA}
} % Do not forget to end the {\footnotesize by the sign }

%\bigskip

% The name of the associate editor will be entered by an editorial staff
% "Communicated by the associate editor name" is not needed for special issue.
% \centerline{(Communicated by the associate editor name)}

%The abstract of your paper
\begin{abstract}
The methodology named LIFE (Linear-in-Flux-Expressions) was developed with the purpose of simulating and analyzing large metabolic systems. With LIFE, the number of model parameters is reduced by accounting for correlations among the parameters of the system.  Perturbation analysis on LIFE systems results in less overall variability of the system, leading to results that more closely resemble empirical data.  These systems can be associated to graphs, and characteristics of the graph give insight into the dynamics of the system.

This work addresses two main problems: 1. for fixed metabolite levels, find all fluxes for which the metabolite levels are an equilibrium, and 2. for fixed fluxes, find all metabolite levels which are equilibria for the system. We characterize the set of solutions for both problems, and show general results relating stability of systems to the structure of the
associated graph. We show that there is a structure of the graph necessary for stable dynamics.
Along with these general results, we show how stability analysis from the fields of network flows, compartmental systems, control theory and Markov chains apply to LIFE systems.
%Limited in \textbf{200} words.
\end{abstract}

%\linenumbers
\section{Introduction}
Quantitative Systems Pharmacology (QSP) aims to gain more information about a potential drug treatment on a human patient before the more expensive stages of development begin \cite{perez-nueno}. QSP models allow us to perform in silico experiments on a simulated metabolic system that predicts the response of perturbing a flux. A drug may be metabolized differently by various patients, and modelers working in pharmacology must anticipate these differences.  Building a profile of how the drug affects different classes of simulated patients will help the developers of new drugs understand the viability of a treatment and acquire insight into the mechanisms by which the drug acts.

A recent advancement in QSP modeling called Linear-in-Flux-Expressions (LIFE) is a method of analyzing systems of Ordinary Differential Equations (ODEs) \cite{LIFE,ACC}.  Originally, LIFE was designed to analyze metabolic systems, which are composed of Fluxes and Metabolites.  Fluxes in the metabolic system are the rates of chemical reactions in the human body, and they determine the dynamics on the metabolites, which are the various chemical compounds involved in metabolism. Modeling these systems depends on choosing fluxes, which are difficult to measure directly, so that the system effectively simulates human metabolism.

To implement LIFE on a metabolic network, the network must be written as a directed graph \cite{ACC}.   The edges of the graph represent the reaction rates (\emph{fluxes}), and the vertices represent quantities of chemical compounds (\emph{metabolites}). From the graph we construct the stoichiometric matrix of the system.  This stoichiometric matrix is not the classical one mentioned by \cite{palsson,van}.  The LIFE method is also different from QSP models whose dynamics traditionally depend on a matrix containing information about the flux of the system.  In these classical QSP models the dynamics of the metabolites are linear with respect to the metabolites.  By contrast, systems using the LIFE method are linear in fluxes and have a stoichiometric matrix that is dependent on the metabolites.

Initially, the LIFE method was developed using the human cholesterol metabolism network \cite{LIFE}. LIFE enables us to simply describe the correlations among the fluxes of the model at steady state.  There are generally many correlations among fluxes, and maintaining these correlations leads to a more consistent response to perturbing the fluxes in the system.  This was advantageous to QSP modelers, who previously analyzed flux perturbations with little to no consideration to relationships among fluxes \cite{allen}. Now, we expand our study of these systems, showing that with few assumptions, systems that are linear in the flux is stable.

The LIFE method evolved from methods in systems biology \cite{palsson}. Systems biology, in conjunction with network flows \cite{heineman, ford}, Markov chains \cite{MarkovChains}, laplacian dynamics \cite{Laplacian}, control theory \cite{piccoli}, and compartmental systems \cite{Bullo,compartmental} allow us to better understand biological networks on which pharmacology models are based. The field of compartmental systems focuses on models based on directed graphs.  Vertices of the graph represent quantities whose dynamics are determined by the edges of the graph, which represent fluxes among compartments.  Markov chains study dynamics on directed graphs as well, but by contrast, this field focuses on stochastic processes. Control theory studies the way an external agent can alter the natural evolution of a system, given a set of admissible controls.  In pharmacology, metabolism follows its natural evolution, and drugs serve as our controls.  These fields have much to contribute to systems pharmacology, and we summarize useful results. We identify assumptions which are usually satisfied by real metabolic networks, guaranteeing stability of the metabolic system at a unique equilibrium.

The paper is organized as follows. In section \ref{sec:sysmodel},
we describe the model system for the LIFE approach in the form of a system of ODEs associated to a metabolic networks, then
show existence of positive solutions and provide results of equilibria under general assumptions.
Also special classes of LIFE systems are introduced.
Section \ref{sec:fix-x} investigates the flow vectors for which a given metabolite vector $x$ is an equilibrium of the network,
including the extreme pathways approach.
On the other side, Section \ref{sec:fix-f} studies the opposite problem: find the metabolite vectors which are
equilibria of the network for a fixed flow vector. This is done first investigating the relationships
between linear LIFE systems and Markov chains, Laplacian dynamics and linear compartmental systems.
Then we deal with special classes of nonlinear LIFE systems.
Finally, a comparison between zero-deficiency theory is discussed.
The paper ends with conclusions in Section \ref{sec:conclusion} and an Appendix containing examples.

\section{System model}\label{sec:sysmodel}
\subsection{Notation and preliminaries}
We indicate by $\R_+ = [0, +\infty)$ the set of positive real numbers,
by $\R^n$ the Euclidean real space of dimension $n$
and by $M_{n\times m}$ the set of $n\times m$ matrices with real entries.
Given a matrix $S$, we indicate by $S^T$ its transpose. Given $d_1,\ldots,d_n\in\R$,
$\diag(d_1,\ldots,d_n)$ is the diagonal matrix with entries $d_i$ on the diagonal.
We denote by $\mathbf 1$ a column vector with all entries equal to $1$, of size clear from the context.

We introduce some terminology commonly used in graph theory.
A directed graph is a couple $G=(V,E)$, with $V=\{v_1,\ldots,v_n\}$ the set of vertices
and $E\subset V\times V$ the set of edges.
For a graph with $n$ vertices and $m$ edges, ordering the edges lexicographically, the \emph{incidence matrix} is a matrix, $\Gamma \in M_{n\times m}$ such that $\Gamma_{ij} = 1$ if the $j$th edge is $(v_k,v_i)$ for some vertex $v_k$, $\Gamma_{ij} = -1$ if the $j$th edge is $(v_i,v_k)$ for some vertex $v_k$, and  $\Gamma_{ij} = 0$ otherwise.
A path is a sequence of distinct vertices $v_{i_1}\cdots v_{i_k}$, with $(v_{i_j},v_{i_{j+1}})\in E$ for $j=1,\ldots,k-1$.
A graph is \emph{strongly connected} if there exists a path between every pair of vertices.
A \emph{strongly connected component} of a directed graph is a maximal strongly connected subgraph.

A \emph{terminal component} of a directed graph $G=(V,E)$ is a strongly connected component $G'=(V',E')$, with $V'\subset V$, $E'\subset E$, such that there exists no edge $e=(v',v)$, with  $v'\in V'$ and $v\in V\setminus V'$.
An undirected path is a sequence of distinct vertices $v_{i_1},\cdots, v_{i_k}$, with either $(v_{i_j},v_{i_{j+1}})\in E$ or
$(v_{i_{j+1}},v_{i_j})\in E$ for $j=1,\ldots,k-1$. A directed graph is \emph{weakly connected}
if there exists an undirected path between every pair of vertices.
A \emph{weakly connected component} of a directed graph is a maximal weakly connected subgraph.
A directed graph $G=(V,E)$ is \emph{weakly reversible} if every weakly connected component is also strongly connected.

\subsection{LIFE model}
We indicate by $x\in\R^n$ the metabolite variables and by $f\in\R^m$ the flux variables.
A general system of ODEs which governs the quantities of $x$ and $f$ is written as
\begin{align}
\frac{\mathrm dx}{\mathrm dt} = F(x,f),\label{dynamics-metabolites}\\
\frac{\mathrm df}{\mathrm dt} = G(x,f)\label{dynamics-fluxes},
\end{align}
where $F:\R^n\times\R^m\to\R^n$ and $G:\R^n\times\R^m\to\R^m$.
In \cite{time-scale,klinke}, the authors show that the dynamics described by \eqref{dynamics-metabolites} evolve over a much smaller time-scale than \eqref{dynamics-fluxes}. This is referred to as ``time-scale separation''. Based on time-scale separation arguments of metabolic systems, we approximate the dynamics of the fluxes with $G \approx 0$, and our work focuses on the dynamics of the metabolites (\ref{dynamics-metabolites}), with the fluxes playing the role of constant  parameters.

The dynamics \eqref{dynamics-metabolites} is very general and we restrict to special system,
which are linear in the fluxes, thus can be written as:
\begin{equation}\label{met-dyn-LIFE}
\dot{x}=S(x)\cdot f
\end{equation}
where $S:\R^n\to M_{n\times m}$ is called the \emph{stoichiometric matrix}.
One constructs the stoichiometric matrix from the metabolites and the reactions that comprise a biochemical system.
Each reaction corresponds to a flux $f$ that connects two distinct metabolites or represents an intake or an excretion from the network.
Each row of $S$ corresponds to a metabolite and each column of $S$ corresponds to a flux.

We now illustrate how to construct a directed graph from the metabolic network
for the system \eqref{met-dyn-LIFE}.
We represent metabolites with vertices $V=\{v_1,\ldots,v_n\}$.
We construct a set of edges $E \subset V \times V$ to represent reactions; each edge is associated to a flux from one metabolite to another, notice that we do not have loops.
To represent intakes and excretions, we introduce two virtual vertices, $v_0$ and $v_{n+1}$, not associated with any metabolite but rather representing the external environment.
We denote by $I,X$ the set of vertices attached to $v_0, v_{n+1}$,
the vertices in $I$ and $X$ are called \emph{intake vertices} and \emph{excretion vertices}, respectively.
We also introduce edges $(v_0,w)$ with $w\in I \subset V$ representing intakes, and $(w,v_{n+1})$, $w\in X \subset V$ representing excretions.
We use the extended graph $\tilde G = (\tilde V, \tilde E)$ defined by $\tilde V = V \cup \{v_0, v_{n+1}\} = \{v_0, v_1, \dots, v_n, v_{n+1}\}$ and $\tilde E$ collecting edges in $E$ together with intake and excretion edges.
The rows of the matrix $S$ can be indexed by vertices in $V$ and the columns by edges in $\tilde E$,
thus we write $S_{ve}$ for the entry corresponding to vertex $v$ and edge $e$.
Moreover we denote by $x_v$ the metabolite corresponding to vertex $v$
and by $f_e$ the flux corresponding to edge $e$.

\begin{example}
To illustrate the concepts of graph with virtual vertices and stoichiometric matrix
related to a metabolic network, we provide a toy example with linear dynamics.
Consider the system given by the following stoichiometric matrix and fluxes vector:
\begin{equation*}
\newcommand\scalemath[2]{\scalebox{#1}{\mbox{\ensuremath{\displaystyle #2}}}}
S(x)=
\scalemath{1}{
\begin{pmatrix}
 1 & -x_1 & -x_1 & 0   & 0    & 0   &  x_4 \\
 0 & x_1  & 0    &-x_2 & 0    & 0   & 0\\
 0 & 0    & x_1  & x_2 & -x_3 & -x_3& 0   \\
 0 & 0    & 0    & 0   &  x_3 &0    & -x_4\\
\end{pmatrix},
} \,
f = \begin{pmatrix}
f_{(v_0,v_1)} \\ f_{(v_1,v_2)} \\ f_{(v_1,v_3)} \\ f_{(v_2,v_3)} \\ f_{(v_3,v_4)} \\
f_{(v_3,v_5)} \\ f_{(v_4,v_1)}
\end{pmatrix}.
\end{equation*}
Then the corresponding graph is represented in Figure \ref{fig:example_network}.
\end{example}

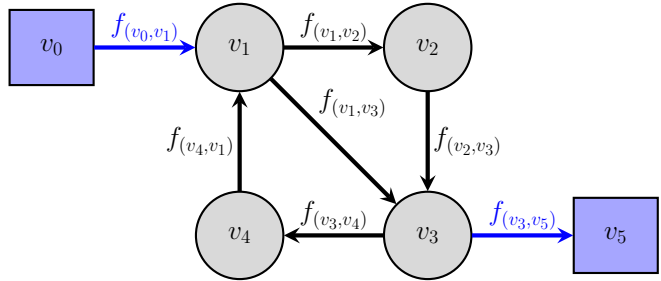
\begin{figure}[h]
\begin{center}
\usetikzlibrary{shapes.geometric, arrows, decorations.markings}

\tikzstyle{metab} = [circle, minimum width=2cm, minimum height=2cm,text centered, draw=black, fill=gray!30]
\tikzstyle{arrow} = [ultra thick,->,>=stealth]
\tikzstyle{vecArrow} = [thin]

\begin{tikzpicture}[node distance=0.6cm,thick,scale=0.5, every node/.style={transform shape}]

\node (source) [metab, rectangle, fill=blue!35, xshift=-5cm, yshift=5cm, text width=2cm] {\Huge $v_0$};
\node (x1) [metab, xshift=0cm, yshift=5cm, text width=2cm] {\Huge $v_1$};
\node (x2) [metab, xshift=5cm, yshift=5cm, text width=2cm] {\Huge $v_2$};
\node (x3) [metab, xshift=5cm,yshift=0cm, text width=2cm] {\Huge $v_3$};
\node (x4) [metab, xshift=0cm,yshift=0cm, text width=2cm] {\Huge $v_4$};
\node (sink) [metab, rectangle, fill=blue!35, xshift=10cm, yshift=0cm, text width=2cm] {\Huge $v_{5}$};

\draw[arrow, blue] (source) -- (x1) node[midway,above,rotate=0] {\Huge $f_{(v_0, v_1)}$};
\draw[arrow] (x1) -- (x2) node[midway,above,rotate=0] {\Huge $f_{(v_1, v_2)}$};
\draw[arrow] (x2) -- (x3) node[midway,right,rotate=0] {\Huge $f_{(v_2, v_3)}$};
\draw[arrow] (x3) -- (x4) node[midway,above,rotate=0] {\Huge $f_{(v_3, v_4)}$};
\draw[arrow] (x4) -- (x1) node[midway,left,rotate=0] {\Huge $f_{(v_4, v_1)}$};
\draw[arrow] (x1) -- (x3) node[midway, above,xshift=0.5cm,yshift=0.5cm] {\Huge $f_{(v_1, v_3)}$};
\draw[arrow, blue] (x3) -- (sink) node[midway,above,rotate=0] {\Huge $f_{(v_3, v_5)}$};
%\draw[arrow] (x2) -- (x4) node[near start,sloped, below] {\huge $f_6$};
\end{tikzpicture}
\caption{A directed graph $\tilde G = (\tilde{V},\tilde{E})$ representing a biochemical system. The rectangles indicate virtual vertices and the subgraph of circular vertices and edges connecting them is $G = (V,E)$.}
\label{fig:example_network}
\end{center}
\end{figure}

\begin{remark}
It is worth a reminder that our stoichiometric matrix is different from the traditional one defined by \cite{palsson,van}, in which entries are
 stoichiometric coefficients, i.e. do not depend on metabolites.
\end{remark}

To correctly represent the reactions corresponding to fluxes (which take always strictly positive values), we assume:
\begin{itemize}
\item[(A)] For $x\in (\R_+)^n$, it holds
{\footnotesize\begin{equation*}
%%\label{eq:pos-neg-cond}
%%replaced references to \eqref{eq:pos-neg-cond} with (A)
S_{v e}(x) = \begin{cases}
H_e(x)>0  & \mbox{if}\ e=(w,v),\ w \in V\ \mbox{and}\ x_v>0\ \mbox{or}\ e=(v_0,v),\ v\in I \\
- H_e(x)<0 & \mbox{if}\ e=(v,w),\ w \in V\ \mbox{and}\ x_v>0\ \mbox{or}\ e=(v,v_{n+1}),\ v\in X\ \mbox{and}\ x_v>0 \\
0 & \mbox{otherwise},
\end{cases}
\end{equation*}}where $H_e:\R^n\to \R$ is a positive continuous function.
\end{itemize}
Notice that Assumption (A) implies that, for each $v \in V$,
\begin{equation} \label{eq:mass-cons}
\sum_{v \in V} S_{v e} (x) = \begin{cases}
H_e(x)  & e=(v_0,\bar{v}),\ \bar{v}\in I,\\
-H_e(x) & e=(\bar{v},v_{n+1}),\ \bar{v}\in X,\\
0 & \mbox{otherwise},
\end{cases}
\end{equation}
namely all columns of $S$ have zero sum, except those corresponding to intakes and excretions, which have positive and negative sum, respectively.
Under Assumption (A), the dynamics \eqref{met-dyn-LIFE} can be interpreted as mass conservation law. Indeed, re-writing \eqref{met-dyn-LIFE} entrywise and using (A), we have
\begin{equation} \label{eq:mass-cons-Kirch}
\dot x_v
=  \sum_{e \in \tilde E} S_{ve}(x) f_e
=  \sum_{w: (w,v) \in \tilde E} H_{(w,v)}(x) f_{(w,v)}
  - \sum_{w: (v,w) \in \tilde E} H_{(v,w)}(x) f_{(v,w)} \,,
\end{equation}
which is the mass balance for metabolite $x_v$: its variation is given by the sum of the incoming flows, minus the sum of the outgoing flows.
This is the analogous Kirchhoff's current law for electrical networks, with the  difference that currents are allowed to take negative values as well, while here metabolite variables are non-negative.

The total mass in the system is $m = \sum_{v\in V} x_v$.
From~\eqref{eq:mass-cons-Kirch} we have
\[ \dot m =
\sum_{v \in I} H_{(v_0,v)}(x) f_{(v_0,v)}
- \sum_{v \in X} H_{(v,v_{n+1})}(x) f_{(v,v_{n+1})} \,.
\]
Clearly, in the case without intakes nor excretions, $\dot m = 0$, i.e. the total mass of a closed system is constant in time.

Another remark which is useful is that, under Assumption~(A), $S(x)$ can be re-written as $S(x) = \Gamma D(x)$, where $D(x)$ is a diagonal matrix of size $m \times m$, with diagonal entries given by $H_e(x)$'s, and $\Gamma$ is  obtained from the incidence matrix of $\tilde G$ by removing the first and last rows (corrseponding to $v_0$ and $v_{n+1}$).  In the particular case without intakes nor excretions, $\Gamma$ is the incidence matrix of $G$.

In the remainder of this section we study the dynamics \eqref{met-dyn-LIFE} under
the very general Assumption (A), while later in the paper we add other assumptions, restricting our attention to systems for which stronger statements can be obtained.
A first important general property is that positivity of solution is guaranteed:
\begin{proposition}\label{prop:positivity}
Consider a system \eqref{met-dyn-LIFE} satisfying (A) and the Cauchy problem with initial datum $x_v(0)=x^v_0$.
Assume that $S$ is locally Lipschitz.
If $f_e>0$ for every $e\in E$ and $x^v_0\geq 0$ for every $v\in V$, then
there exists a local solution $x_v(\cdot)$ defined on $[0,T]$, $T>0$,
and $x_v(t)\geq 0$ for every $t\in [0,T]$.
\end{proposition}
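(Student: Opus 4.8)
The plan is to combine a standard local existence theorem with an invariance argument for the nonnegative orthant $(\R_+)^n$, the latter resting on a one-sided differential inequality extracted from Assumption (A). For local existence, note that since $f$ is a fixed vector, the map $x\mapsto S(x)f$ is a linear combination, with constant coefficients $f_e$, of the locally Lipschitz entries $S_{ve}$, hence it is itself locally Lipschitz on $\R^n$. The Cauchy--Lipschitz (Picard--Lindel\"of) theorem then gives a unique solution $x(\cdot)$ of \eqref{met-dyn-LIFE} with $x(0)=x_0$ on some interval $[0,T]$, $T>0$. On this interval the trajectory is continuous, so its image lies in a compact set $K\subset\R^n$ on which $S$ admits a single Lipschitz constant $L$.

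The key step is a structural estimate on the outgoing entries. Fix $v\in V$ and an edge $e=(v,w)\in\tilde E$ leaving $v$ (internal or excretion). By Assumption (A) one has $S_{ve}(x)=0$ whenever $x_v=0$. Thus, for $x\in K$ with $x\ge 0$, writing $\hat x$ for the point obtained from $x$ by resetting its $v$-th coordinate to $0$, we have $\hat x\ge 0$, $S_{ve}(\hat x)=0$, and Lipschitz continuity yields
\[
|S_{ve}(x)| = |S_{ve}(x)-S_{ve}(\hat x)| \le L\,\|x-\hat x\| = L\,x_v .
\]
On the other hand, Assumption (A) forces $S_{ve}(x)\ge 0$ for every edge $e$ entering $v$ (internal incoming edges and the intake edge $(v_0,v)$), and the fluxes are positive. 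Combining these facts entrywise gives, at every time with $x(t)\ge 0$,
\[
\dot x_v = \sum_{e} S_{ve}(x)f_e \ \ge\ -\!\!\sum_{e=(v,w)} |S_{ve}(x)|\,f_e \ \ge\ -C\,x_v,
\qquad C := L\,\max_{v}\sum_{e\text{ out of }v} f_e .
\]

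To conclude invariance, set $t^\star=\inf\{t\in[0,T]:x_v(t)<0\text{ for some }v\}$. On $[0,t^\star)$ the trajectory is nonnegative, so the bound $\dot x_v\ge -C x_v$ holds for each $v$, and Gr\"onwall's inequality gives $x_v(t)\ge x_v(0)e^{-Ct}\ge 0$. By continuity $x_v(t^\star)\ge 0$ for every $v$, contradicting the definition of $t^\star$ unless $t^\star\ge T$. Hence $x_v(t)\ge 0$ for all $v$ and all $t\in[0,T]$, as claimed.

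The main obstacle is the boundary analysis: the vector field is only tangent, not strictly inward, to $\partial(\R_+)^n$, since $\dot x_v$ can vanish when $x_v=0$ at a non-intake vertex, so a naive inward-pointing argument fails. The device that resolves this is the one-sided bound $|S_{ve}(x)|\le L x_v$, which upgrades the mere vanishing of outgoing entries on $\{x_v=0\}$ into the quantitative inequality $\dot x_v\ge -Cx_v$ needed for Gr\"onwall; introducing the first-exit time $t^\star$ then removes the circularity of having to assume $x\ge 0$ in order to invoke (A). Alternatively, one could verify the subtangentiality condition $S(x)f\in T_{(\R_+)^n}(x)$ on the boundary and appeal directly to Nagumo's invariance theorem.
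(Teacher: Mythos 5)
Your existence step and your structural estimates are sound: under Assumption (A) the outgoing entries $S_{ve}$ vanish on the face $\{x_v=0\}$, the incoming entries are nonnegative, and the Lipschitz bound $|S_{ve}(x)|\le L\,x_v$ gives $\dot x_v\ge -Cx_v$ \emph{as long as} $x(t)\in(\R_+)^n$. The genuine gap is in the final contradiction. The bad set $N=\{t\in[0,T]:\ x_v(t)<0\ \text{for some}\ v\}$ is relatively open in $[0,T]$ and does not contain $0$, so its infimum $t^\star$ is never an element of $N$; hence $x(t^\star)\ge 0$ holds automatically and contradicts nothing. Your argument implicitly treats $t^\star$ as a \emph{minimum} of $N$, i.e.\ assumes some coordinate is strictly negative at $t^\star$, which is false. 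The real question is what happens immediately after $t^\star$: if $x_v(t^\star)=0$ for some $v$ (unavoidable when the initial datum has a zero coordinate, a case the proposition explicitly allows), nothing you have proved prevents $x_v$ from dipping negative right after $t^\star$. You cannot run Gr\"onwall past $t^\star$, because the inequality $\dot x_v\ge -Cx_v$ was derived from Assumption (A), which constrains $S$ only on $(\R_+)^n$ and says nothing outside the orthant — so the circularity you claim the first-exit time removes is in fact still present. (Note that your argument \emph{does} work when $x_v(0)>0$ for every $v$: then Gr\"onwall gives $x_v(t^\star)\ge x_v(0)e^{-Ct^\star}>0$ for all $v$, and continuity keeps the trajectory in the open orthant slightly beyond $t^\star$, which genuinely contradicts $t^\star=\inf N$. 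The gap is exactly at boundary initial data.)

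The repair is one of the standard invariance devices, and your closing sentence already names the right one: the sign structure you established is precisely the subtangentiality (quasi-positivity) condition $S(x)f\in T_{(\R_+)^n}(x)$ on $\partial(\R_+)^n$, so Nagumo's theorem for locally Lipschitz fields yields forward invariance of the orthant directly; this is in substance the paper's own (very terse) proof, which asserts existence from the Lipschitz condition and positivity from invariance of $\{x:x_v\ge 0\}$. Alternatively, two elementary fixes of your own scheme: (i) run your argument for the strictly positive data $x_0+\ep\mathbf 1$, where it is valid, and let $\ep\to 0$ using continuous dependence on initial data to conclude $x(t)=\lim_{\ep\to 0}x^\ep(t)\ge 0$; or (ii) extend the field by $\tilde F(x)=S([x]_+)f$, with $[x]_+$ the coordinatewise positive part, note that $\tilde F_v(x)\ge 0$ whenever $x_v\le 0$ (the outgoing entries are then evaluated at a point whose $v$-th coordinate is $0$, so they vanish), deduce that solutions of the extended system never leave the orthant, and use uniqueness to identify them with solutions of the original system.
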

\begin{proof}
Existence follows from Lipschitz condition, while positivity of solution follows from the invariance of the set $\{x:x_v\geq 0\}$.
\end{proof}

In the next Proposition we show that existence of nontrivial equilibria implies
some structure on the network: every vertex $v$ for which there is a directed path from some $w\in I$ to $v$, must also have a directed path from $v$ to some $y\in X$.  This result refines the space of networks with which we are concerned.
More precisely:
\begin{proposition}\label{prop:A_neccessary}
Consider a system \eqref{met-dyn-LIFE} satisfying (A).
% and let $\tilde G = (\tilde V, \tilde E)$ be the associated graph.
Assume there exists an equilibrium $\bar{x}\in (\R_+)^n$
for a flux vector $f$ such that $f_e>0$ for every $e\in\tilde  E$.
Then for every vertex $v \in V$ for which there exists a path from $I$ to $v$,
there exists a path from $v$ to $X$.
\end{proposition}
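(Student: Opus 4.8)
The plan is to run a mass-balance (conservation) argument localized to the set of \emph{non-excreting} vertices. Define $W\subseteq V$ to be the set of vertices $v$ from which there is no directed path to $X$ in $\tilde G$ (equivalently, no directed path to $v_{n+1}$). The entire content of the statement is that $W$ cannot be reached from $I$, so I will prove exactly this: every $v$ admitting a path from $I$ then lies outside $W$ and hence admits a path to $X$.

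First I would establish a purely graph-theoretic \emph{trap property} of $W$: no edge leaves $W$, and no excretion edge originates in $W$. Indeed, if $(u,u')\in E$ with $u\in W$ and $u'\notin W$, then concatenating $(u,u')$ with a path from $u'$ to $X$ would furnish a path from $u$ to $X$, contradicting $u\in W$; hence every out-neighbour of a vertex of $W$ again lies in $W$. The same reasoning gives $W\cap X=\emptyset$, since a vertex of $X$ reaches $v_{n+1}$ through its excretion edge.

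Next I would sum the equilibrium relations over $W$. Writing the dynamics in the Kirchhoff form \eqref{eq:mass-cons-Kirch} and using $\dot{\bar x}_v=0$ for all $v$, I obtain
\begin{equation*}
0=\sum_{v\in W}\dot{\bar x}_v
=\sum_{v\in W}\Big(\sum_{w:(w,v)\in\tilde E}H_{(w,v)}(\bar x)\,f_{(w,v)}-\sum_{w:(v,w)\in\tilde E}H_{(v,w)}(\bar x)\,f_{(v,w)}\Big).
\end{equation*}
Every edge with both endpoints in $W$ contributes once with a plus sign (as an inflow to its head) and once with a minus sign (as an outflow from its tail), so the internal terms cancel; by the trap property there are no outflow edges from $W$ at all. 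After cancellation only the edges \emph{entering} $W$ from $V\setminus W$ together with the intake edges into $W$ survive, and the identity reduces to $\sum_{e\ \mathrm{into}\ W}H_e(\bar x)\,f_e=0$.

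Finally I would invoke strict positivity. Since $f_e>0$ for every $e\in\tilde E$ and each $H_e$ is a positive function, every surviving summand is strictly positive, so this finite sum can vanish only if it is empty. Hence $W$ has no incoming edge, neither a cross-boundary edge from $V\setminus W$ nor an intake edge from $v_0$ (the latter forcing $I\cap W=\emptyset$). Consequently no directed path from $I$ can ever cross into $W$, so $W$ is unreachable from $I$, which is the claim. I expect the main obstacle to be the sign bookkeeping in the cancellation step together with the role of Assumption~(A): one must rely on the form \eqref{eq:mass-cons-Kirch}, in which each inflow term appears as $H_e(\bar x)\,f_e$ with $H_e>0$, and check that this strict positivity is genuine — i.e.\ that a surviving boundary inflow term is not secretly annihilated by a vanishing component of $\bar x$ allowed by the ``$x_v>0$'' clauses of (A). It is precisely the strict positivity of \emph{all} fluxes (intakes and excretions included) and of the $H_e$ that upgrades ``zero net inflow into $W$'' to ``no inflow edges into $W$''.
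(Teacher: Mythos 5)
Your mass-balance over the trap set $W$ is sound up to the identity $\sum_{e\ \mathrm{into}\ W} S_{v,e}(\bar x)\, f_e = 0$ (internal edges cancel, and by the trap property no edge leaves $W$), and it does correctly yield $I \cap W = \emptyset$, because intake entries of $S$ are unconditionally positive under (A). The gap is the very last step, which you flagged and then dismissed with an incorrect justification: for a cross-boundary edge $e = (u,v)$ with $u \in V \setminus W$, $v \in W$, Assumption (A) sets $S_{ve}(\bar x) = 0$ whenever $\bar x_u = 0$, no matter how positive $f_e$ and the function $H_e$ are; writing the balance in the informal form \eqref{eq:mass-cons-Kirch} hides exactly this clause. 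So positivity of all fluxes does \emph{not} upgrade ``zero net inflow into $W$'' to ``no inflow edges into $W$''; what the balance actually gives is the dichotomy that every edge entering $W$ is either an intake (excluded) or has a tail with $\bar x_u = 0$. Indeed your intermediate claim ``$W$ has no incoming edge'' is outright false: in the system of Figure \ref{fig:example3}, $W = \{v_3\}$, the edge $(v_4,v_3)$ enters $W$, all fluxes are strictly positive, and an equilibrium exists with $\bar x_{v_4} = 0$ --- the inflow term is annihilated precisely by the zero-tail clause of (A). The proposition survives there only because $v_3$ is not reachable from $I$; your argument, proving too much, cannot see that distinction.

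To close the gap you need the lemma that is the heart of the paper's proof: at an equilibrium with all fluxes positive, every vertex reachable from $I$ has strictly positive concentration. This is an induction along a path $v_0, v_1, \dots$: if $\bar x_{v_1} = 0$, then by (A) all outflow entries of $S$ at $v_1$ vanish, while the intake term $H_{(v_0,v_1)}(\bar x) f_{(v_0,v_1)} > 0$ is unconditional, so $\dot x_{v_1} > 0$, contradicting equilibrium; the inductive step is identical, with $\bar x_{v_i} > 0$ playing the role of the intake. With this lemma your argument finishes cleanly: a path from $I$ into $W$ must cross at some edge $(u,v)$ with $u \notin W$ reachable from $I$, hence $\bar x_u > 0$, contradicting the conclusion $\bar x_u = 0$ of your mass balance. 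The resulting proof is then essentially the paper's, the only cosmetic difference being that the paper localizes the balance to a terminal component contained in $W$ rather than to all of $W$; both sets work because neither has outgoing edges.
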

\begin{proof}
Assume there exists an equilibrium $\bar{x}$  as in the statement
and, by contradiction, a vertex $v$
for which there exists a path from $w \in I$ to $v$,
but there exists no $y\in X$ to which $v$ is connected.
Since there is no path from $v$ to $X$, either $v$ belongs to a terminal component with no excretion, or there is a path from $v$ to a terminal component with no excretion.
Denote by $G_T = (V_T, E_T)$ such a terminal component.
Since there are a path from $w \in I$ to $v$ and a (possibly trivial) path from $v$ to $V_T$, then there  is also a path from $v_0$ to $V_T$. Denote by $v_0, v_1 = w, \dots, v_{\ell-1},  v_{\ell} \in V_T$ one such a path, such that $v_{\ell-1} \notin V_T$ (possibly the path is a single edge, in case $w \in V_T$).

It is easy to show that $x_{v_i} >0$ for all $i=1, \dots, \ell$, as follows.
Considering $e=(v_0, v_1)$, by (A) we have $S_{v_0, e}(\bar{x})=H_e(\bar{x})>0$.
Similarly, for every $e'=(v_1,w')$,  $x_{v_1}=0$ implies $S_{v_1,e'}(x)=0$. This means we must have $\bar{x}_{v_1}>0$, otherwise we would have
$\dot{x}_{v_1} \ge f_e H_{e}(\bar{x})>0$ (where $e=(v_0, v_1)$), contradicting $\bar{x}$ being an equilibrium.
Having proved that $\bar x_{v_1}>0$, we can proceed by induction: for $i=1, \dots, \ell-1$,  $\bar x_{v_i}>0$ implies $\bar x_{v_{i+1}}>0$. The argument is the same as above, with a slight modification: looking at $e=(v_{i}, v_{i+1})$, $S_{v_i, e}(\bar x) = H_e(\bar{x})>0$
thanks to Assumption (A) together with $\bar x_i>0$, while above we were in the case of an intake.

Finally we have a terminal component $G_T =(V_T, E_T)$ with no excretion, and an edge $(v_{\ell-1}, v_\ell)$ with $v_\ell \in V_T$ and $v_{\ell-1}\notin V_T$, such that either $v_{\ell-1} = v_0$ or $\bar x_{v_{\ell-1}}>0$. In either case, considering $e=(v_{\ell-1}, v_\ell)$, by (A) we have $S_{v_{\ell-1},e}(\bar x) = H_e(\bar x)>0$.
Now consider the variation of mass in the component $G_T$:
since there is no outgoing edge from $G_T$, and there is at least the incoming edge $e$, we have
$ \frac{\mathrm d}{\mathrm d t} \sum_{v\in V_T} x_v
=  \sum_{v\in V_T} \dot x_v
\ge  H_{e} f_e >0$, contradicting the fact that $\bar{x}$
is an equilibrium.
\end{proof}
\begin{figure}[h]
\begin{center}
\usetikzlibrary{shapes.geometric, arrows, decorations.markings}

\tikzstyle{metab} = [circle, minimum width=2cm, minimum height=2cm,text centered, draw=black, fill=gray!30]
\tikzstyle{arrow} = [ultra thick,->,>=stealth]
\tikzstyle{vecArrow} = [thin]

\begin{tikzpicture}[node distance=0.6cm,thick,scale=0.5, every node/.style={transform shape}]

\node (source) [metab, rectangle, fill=blue!35, xshift=-5cm, yshift=5cm, text width=2cm] {\Huge $v_0$};
\node (x1) [metab, xshift=0cm, yshift=5cm, text width=2cm] {\Huge $v_1$};
\node (x2) [metab, xshift=5cm, yshift=5cm, text width=2cm] {\Huge $v_2$};
\node (x3) [metab, xshift=5cm,yshift=0cm, text width=2cm] {\Huge $v_3$};
\node (x4) [metab, xshift=0cm,yshift=0cm, text width=2cm] {\Huge $v_4$};
\node (sink) [metab, rectangle, fill=blue!35, xshift=10cm, yshift=0cm, text width=2cm] {\Huge $v_{5}$};

\draw[arrow, blue] (source) -- (x1) node[midway,above,rotate=0] {\Huge $f_{(v_0, v_1)}$};
\draw[arrow] (x1) -- (x2) node[midway,above,rotate=0] {\Huge $f_{(v_1, v_2)}$};
\draw[arrow, blue] (x2) -- (sink) node[midway,right,xshift=0.5cm, yshift=0.5cm] {\Huge $f_{(v_2, v_5)}$};
%\draw[arrow] (x3) -- (x4) node[bend right,midway,above,rotate=0] {\huge $3,(3,4)$};
\draw[arrow] (x3) to[bend right] (x4);
\draw[arrow] (x4) to[bend right] (x3);
%\draw[arrow] (x4) -- (x1) node[midway,left,rotate=0] {\huge $4,(4,1)$};
%\draw[arrow] (x1) -- (x3) node[midway,right, above, xshift=0.5cm,yshift=0.5cm] {\huge $1,(1,3)$};
%\draw[arrow, blue] (x3) -- (sink) node[midway,above,rotate=0] {\huge $6,(3,5)$};
\draw[arrow] (x2) -- (x4) node[midway,sloped, above] {\Huge $f_{(v_2, v_4)}$};
\node(label1) [draw=none, xshift=2.6cm, yshift=0.5cm] {\Huge $f_{(v_3, v_4)}$};
\node(label2) [draw=none, xshift=2.6cm, yshift=-.5cm] {\Huge $f_{(v_4, v_3)}$};
\end{tikzpicture}
\caption{A directed graph $\tilde G = (\tilde{V},\tilde{E})$ illustrating Proposition \ref{prop:A_neccessary}. Vertices $v_3$ and $v_4$ form a terminal component.  There exists a path from $v_0$ to $v_4$ yet there is no path from $v_4$ to $v_5$}\label{fig:example_network}
\end{center}\vspace*{-10pt}
\end{figure}
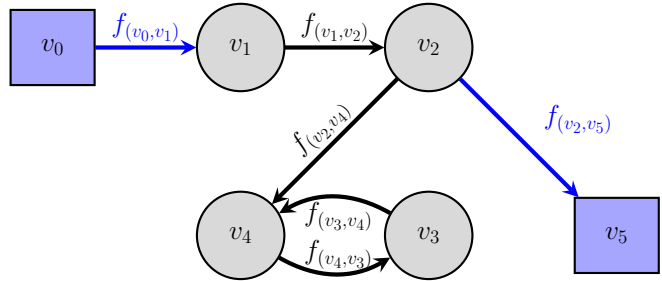
The system associated to the graph in Figure \ref{fig:example_network} provides an explicit example
of the contradiction argument of Proposition \ref{prop:A_neccessary}.
The vertices $v_3$ and $v_4$ violate the condition of Proposition \ref{prop:A_neccessary}, since there is a path from the intake vertex $v_1$ to $v_3$ and $v_4$, and there is no path from $v_3$ and $v_4$ to the excretion vertex $v_2$; hence, the system does not admit any equilibrium.
The proof argument, specialized to this example, is to notice that $v_3$ and $v_4$ form a terminal component with no excretion, and to look at the path $v_0, v_1, v_2, v_4$. Assuming that there is an equilibrium $\bar x$, one shows first that $\bar x_{v_1}>0$ due to the intake, and then that $\bar x_{v_1}>0$ implies also $\bar x_{v_2}>0$.
Notice that  $\dot x_{v_3} + \dot x_{v_4} \ge  S_{v_2,(v_2,v_4)} f_{v_2, v_4} $ (in this particular example, equality is actually true), and the latter is $>0$ since $\bar x_{v_2}>0$,
thus contradicting the fact that $\bar x$ is an equilibrium: the mass $x_{v_3} + x_{v_4}$ grows unbounded.

The same system of Figure \ref{fig:example_network} shows the necessity of the assumption that
$H_e(x)=0$ when $x_v=0$, $e=(v,w)$ for
Proposition \ref{prop:A_neccessary} to hold.
%The function $H_e(x)$ determines the mass flowing along the edge $f_e$.
Indeed  assume that $H_{(v_2,v_5)}(x)\break= 1$ for every $x$ such that $x_{v_2}=0$
and that the other functions $H_e$, $e=(v,w)$, satisfy $H_e(x)=x_v$.
Let $f$ be a flux vector with strictly positive components such that
$f_{(v_1,v_2)}=f_{(v_2,v_5)}$.  Then $\bar{x}=(\frac{f_{(v_0,v_1)}}{f_{(v_1,v_2)}},
0, \frac{f_{(v_4,v_3)}}{f_{(v_3,v_4)}}, 1 )$ is an equilibrium.

Another example not satisfying the assumptions of Proposition \ref{prop:A_neccessary} is shown in Figure \ref{fig:example3}. This system admits an equilibrium under Assumption (A) even though the vertex $v_3$ does not have a path to $v_5$. This is because there does not exist a path from $v_0$ to $v_3$.
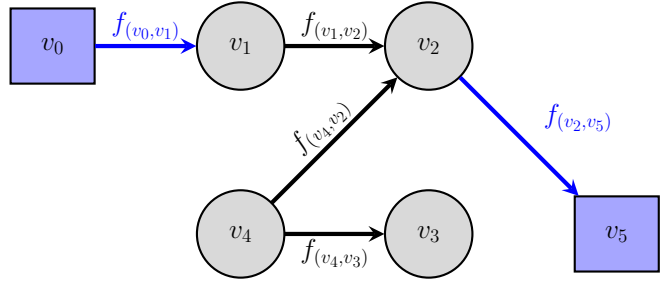
\begin{figure}[h]
\begin{center}
\tikzstyle{metab} = [circle, minimum width=2cm, minimum height=2cm,text centered, draw=black, fill=gray!30]
\tikzstyle{arrow} = [ultra thick,->,>=stealth]
\tikzstyle{vecArrow} = [thin]

\begin{tikzpicture}[node distance=0.6cm,thick,scale=0.5, every node/.style={transform shape}]

\node (source) [metab, rectangle, fill=blue!35, xshift=-5cm, yshift=5cm, text width=2cm] {\Huge $v_0$};
\node (x1) [metab, xshift=0cm, yshift=5cm, text width=2cm] {\Huge $v_1$};
\node (x2) [metab, xshift=5cm, yshift=5cm, text width=2cm] {\Huge $v_2$};
\node (x3) [metab, xshift=5cm,yshift=0cm, text width=2cm] {\Huge $v_3$};
\node (x4) [metab, xshift=0cm,yshift=0cm, text width=2cm] {\Huge $v_4$};
\node (sink) [metab, rectangle, fill=blue!35, xshift=10cm, yshift=0cm, text width=2cm] {\Huge $v_{5}$};

\draw[arrow, blue] (source) -- (x1) node[midway,above,rotate=0] {\Huge $f_{(v_0, v_1)}$};
\draw[arrow] (x1) -- (x2) node[midway,above,rotate=0] {\Huge $f_{(v_1, v_2)}$};
\draw[arrow, blue] (x2) -- (sink) node[midway,right,xshift=0.5cm, yshift=0.5cm] {\Huge $f_{(v_2, v_5)}$};
\draw[arrow] (x4) -- (x3) node[midway,below,rotate=0] {\Huge $f_{(v_4, v_3)}$};
\draw[arrow] (x4) -- (x2) node[midway,sloped, above] {\Huge $f_{(v_4, v_2)}$};
\end{tikzpicture}
\caption{A directed graph where vertices $v_3$ and $v_4$ do not have a path from $v_0$ and also have no path to $v_5$.  For an equilibrium, $\bar{x}$, of this system under Assumption (A), $\bar{x}_{v_4} = 0$ and $\bar{x}_{v_3} \geq x_{v_3}(0)$.}\label{fig:example3}
\end{center}\vspace*{-10pt}
\end{figure}

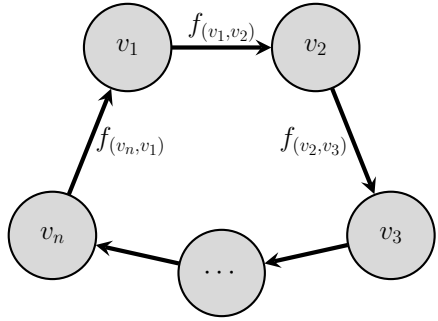
\begin{figure}[h]
\begin{center}
\usetikzlibrary{shapes.geometric, arrows, decorations.markings}

\tikzstyle{metab} = [circle, minimum width=2cm, minimum height=2cm,text centered, draw=black, fill=gray!30]
\tikzstyle{arrow} = [ultra thick,->,>=stealth]
\tikzstyle{vecArrow} = [thin]

\begin{tikzpicture}[node distance=0.6cm,thick,scale=0.5, every node/.style={transform shape}]

%\node (source) [metab, rectangle, fill=blue!35, xshift=-5cm, yshift=5cm, text width=2cm] {\Huge $v_0$};
\node (x1) [metab, xshift=0cm, yshift=5cm, text width=2cm] {\Huge $v_1$};
\node (x2) [metab, xshift=5cm, yshift=5cm, text width=2cm] {\Huge $v_2$};
\node (x3) [metab, xshift=7cm,yshift=0cm, text width=2cm] {\Huge $v_3$};
\node (xn) [metab, xshift=-2cm,yshift=0cm, text width=2cm] {\Huge $v_n$};
\node (x4) [metab, xshift=2.5cm,yshift=-1cm, text width=2cm] {\Huge $\cdots$};

\draw[arrow] (x1) -- (x2) node[midway,above,rotate=0] {\Huge $f_{(v_1, v_2)}$};
\draw[arrow] (x2) -- (x3) node[midway,left,rotate=0] {\Huge $f_{(v_2, v_3)}$};
\draw[arrow] (x3) -- (x4);
\draw[arrow] (x4) -- (xn);
\draw[arrow] (xn) -- (x1) node[midway,right,rotate=0] {\Huge $f_{(v_n, v_1)}$};

\end{tikzpicture}
\caption{A directed cycle graph $G = (V,E)$ with $n$ vertices and no intakes nor excretions.  On such a LIFE system one can prescribe any desired dynamics.\label{fig:example2}}
\end{center}
\end{figure}

For general systems \eqref{met-dyn-LIFE} with Assumption (A) it is not possible to prove
other general conclusions about equilibria, beside Propositions \ref{prop:positivity} and \ref{prop:A_neccessary}.
Indeed consider the simple metabolic network given in Figure \ref{fig:example2}.
Under Assumption (A), the dynamics is written as:
\begin{equation}\label{eq:cycle_system}
\begin{cases}
\dot{x}_{v_1} &= -f_{({v_1},{v_2})}\cdot H_{(v_1,v_2)}(x) + f_{({v_n},{v_1})}\cdot H_{(v_n,v_1)}(x)\\
\dot{x}_{v_2} &= -f_{({v_2},{v_3})}\cdot H_{(v_2,v_3)}(x)+f_{({v_1},{v_2})}\cdot H_{(v_1,v_2)}(x)\\
\dot{x}_{v_3} &= -f_{({v_3},{v_4})}\cdot H_{(v_3,v_4)}(x)+f_{({v_2},{v_3})}\cdot H_{(v_2,v_3)}(x)\\
& \vdots \\
\dot{x}_{v_n} &= -f_{({v_n},{v_1})}\cdot H_{(v_n,v_1)}(x)+f_{({v_{n-1}},{v_n})}\cdot H_{(v_{n-1},v_n)}(x)
\end{cases}.
\end{equation}
We want to show that for any dynamical system in $\R^n$ on a compact set
there exists an equivalent dynamics defined on the cycle graph of Figure \ref{fig:example2}.  In other words, Asumption (A) is so general that we give Proposition \ref{prop:gen-dyn-Rn} to show arbitrary dynamics can be defined, and we focus on more specialized cases (Assumption (B), and (C)).
More precisely for every general dynamics
\begin{equation}\label{eq:sys-Rn}
\begin{cases}
\dot{x}_{v_1}&=F_1(x_{v_1},x_{v_2},\ldots, x_{v_n})\\
\dot{x}_{v_2}&= F_2(x_{v_1},x_{v_2},\ldots, x_{v_n})\\
&\vdots \\
\dot{x}_{v_n} &= F_n(x_{v_1},x_{v_2},\ldots, x_{v_n})\\
\end{cases}.
\end{equation}
we look for a choice of the functions $H_e$ and the fluxes $f_e$ realizing such equivalence.
Note that from \eqref{eq:mass-cons}, we have $x_{v_n} = C - x_1 - \cdots - x_{n-1}$ which implies that
\begin{equation}\label{eq:xvn}
\dot{x}_{v_n} = -\dot{x}_{v_1}-\cdots - \dot{x}_{v_{n-1}} = \sum_{i=1}^{n-1} - F_i.
\end{equation}
Define the set $T = \cup_{i=1}^n \{x:  x_j \geq 0 \text{ for all } j,\ x_i=0 \text{ and } x_1+\cdots+x_n \leq 1 \}$ then we have the following:
\begin{proposition}\label{prop:gen-dyn-Rn}
Consider system \eqref{eq:sys-Rn} and assume $F_1=\cdots=F_n=0$ on the set $T$.
Then there exist functions $H_e$ and fluxes $f_e$ such that the dynamics \eqref{eq:cycle_system}
is equivalent to \eqref{eq:sys-Rn} on the bounded set delimited by $T$.
\end{proposition}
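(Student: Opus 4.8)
The plan is to pass from the two entries $(\pm H_e)$ of each column to a single scalar \emph{edge-flow}, solve the resulting telescoping system, and then spend the one functional degree of freedom that remains to enforce the sign and boundary constraints coming from Assumption (A). Concretely, for $i=1,\dots,n-1$ set $\phi_i(x)=f_{(v_i,v_{i+1})}H_{(v_i,v_{i+1})}(x)$ and $\phi_n(x)=f_{(v_n,v_1)}H_{(v_n,v_1)}(x)$, with the cyclic convention $\phi_0:=\phi_n$. Then \eqref{eq:cycle_system} reads $\dot x_{v_i}=\phi_{i-1}-\phi_i$, so realizing \eqref{eq:sys-Rn} is equivalent to solving the linear system $\phi_{i-1}-\phi_i=F_i$, $i=1,\dots,n$, for the unknown functions $\phi_i$.

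First I would record the solvability condition. Summing the $n$ equations gives $\sum_{i=1}^n F_i=0$, which is exactly the mass-conservation identity \eqref{eq:xvn} (it forces $F_n=-\sum_{i=1}^{n-1}F_i$); thus the system is consistent. Writing $P_i:=\sum_{k=1}^i F_k$ (so $P_0=P_n=0$), the general solution is $\phi_i=\psi-P_i$, where $\psi:=\phi_n$ is a single free continuous function of $x$. Hence the whole problem collapses to choosing one scalar function $\psi$ so that every $\phi_i=\psi-P_i$ has the form $f_e H_e$ of a genuine LIFE system: nonnegative on the admissible region, vanishing on the appropriate boundary, and positive inside.

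This is where the hypothesis enters. Work on the compact simplex $\Delta=\{x:\ x_{v_j}\ge0,\ \sum_j x_{v_j}\le1\}$ delimited by $T$. Because $F_1=\dots=F_n=0$ on $T$, each partial sum $P_i$ vanishes on all of $T$; consequently the boundary requirement that $\phi_i$ vanish on the faces where the corresponding metabolite is absent collapses to the single condition $\psi=0$ on $T$. I would then take $\psi=\bar P+\eta$, where $\bar P:=\max\{0,P_1,\dots,P_{n-1}\}$ is continuous, vanishes on $T$, and dominates every $P_i$, and where $\eta(x):=\prod_{j}x_{v_j}$ is continuous, strictly positive on the interior of $\Delta$, and vanishes exactly on $T$. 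With this choice $\phi_i=(\bar P-P_i)+\eta\ge\eta\ge0$ on $\Delta$, with $\phi_i>0$ in the interior and $\phi_i=0$ on $T$. Setting every flux $f_e=1$ and $H_{(v_i,v_{i+1})}:=\phi_i$ then produces a LIFE system whose dynamics \eqref{eq:cycle_system} satisfies $\dot x_{v_i}=\phi_{i-1}-\phi_i=F_i$, i.e. coincides with \eqref{eq:sys-Rn} on $\Delta$; mass conservation guarantees the two flows agree slice-by-slice on each level set $\sum_j x_{v_j}=C\le1$.

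The main obstacle is the simultaneous control of the signs of all $n$ flows with the single scalar degree of freedom $\psi$: the telescoping fixes the $\phi_i$ up to one common offset, and one must keep them all nonnegative while still forcing them to vanish on the boundary faces dictated by \eqref{eq:mass-cons}. The hypothesis $F_i|_T=0$ is precisely the compatibility condition that resolves this tension, since it makes all $P_i$ vanish on $T$ and thereby allows the common offset $\psi$ to vanish there as well. The one delicate point I would flag is that the $H_e$ built above vanish on the whole of $T$ rather than only on the source/target face of their own edge, so strict positivity of $H_e$ on the lower-dimensional strata of $\partial\Delta$ must be read in the interior sense; this does not affect the claimed equivalence of the dynamics on the region delimited by $T$, which is an interior statement.
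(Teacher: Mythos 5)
Your proof is correct, and at its core it is the same construction as the paper's, just derived more systematically. The paper directly exhibits $H_{(v_i,v_{i+1})} = \sum_{k\leq i}[F_k]_- + \sum_{\ell> i}[F_\ell]_+$ for $i<n$ and $H_{(v_n,v_1)} = \sum_{k=1}^n [F_k]_+$ (with all fluxes equal to $1$) and verifies the telescoping identities by hand; writing $[F_k]_- = [F_k]_+ - F_k$ shows that this is exactly your parametrization $\phi_i = \psi - P_i$ with the particular choice of offset $\psi = \sum_{k=1}^n [F_k]_+$. What your route buys is twofold: first, it exhibits the full solution set of the telescoping system, making transparent that the only freedom is the single common offset $\psi$; second, your offset $\bar P + \eta$ with $\eta = \prod_j x_{v_j}$ is strictly positive in the interior of the simplex, whereas the paper's offset $\sum_k [F_k]_+$ vanishes wherever all the $F_k$ vanish --- in particular at any interior equilibrium of \eqref{eq:sys-Rn} --- so the paper's $H_e$ fail the strict-positivity requirement of Assumption (A) at such points, while yours do not. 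The boundary caveat you flag (your $H_e$ vanish on all of $T$, not only on the face where their own tail metabolite vanishes) applies equally to the paper's construction, and is in fact unavoidable: since every $F_i$ vanishes on $T$, the telescoping relations force all edge flows to coincide on each face of $T$, and the vanishing of the flow leaving the zero metabolite then forces that common value to be zero there, contradicting strict positivity for the other edges. So the paper's closing claim that the constructed $H_e$ ``satisfy (A)'' must be read in the same weakened, interior sense you spell out; your version is the more careful of the two.
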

\begin{proof}
Assign the functions $H_{(v_i,v_j)}(x)$ according to the following rule:
\begin{equation}
\begin{split}
H_{(v_i,v_{i+1})}(x) &=  \sum_{k\leq i}[F_k]_- + \sum_{\ell> i}[F_\ell]_+  \quad \text{if }\ i<n,\\
H_{(v_n,v_1)}(x) &= \sum_{k=1}^n [F_k]_+,
\end{split}
\end{equation}
where $[F]_+=\max\{F,0\}$ and $[F]_-=\max\{-F,0\}$. It is easy to verify that $\dot{x}_{v_i}= F_i(x)$ for $i<n$
and $\dot{x}_{v_n}= - \sum_{i=1}^n F_i=F_n(x)$ using  \eqref{eq:xvn}.
Moreover, because of the assumption on $F_i$'s,
the functions $H_e$ satisfy (A), thus we are done.
\end{proof}

\subsection{Special LIFE systems}

Here we introduce a special class of systems of type \eqref{met-dyn-LIFE}
with simplified dynamics.
We consider Assumption (A), and we impose further restriction on the functions $H_e(x)$: for an edge $e = (v,w)$, we assume $H_e$ to depend on $x_v$ only, and moreover we impose the scalar function $H_e(x_v)$ to be strictly increasing. More precisely,
Assumption (B) is the following.
\begin{itemize}
\item[(B)] It holds
\begin{equation} \label{eq:S_construction}
S_{v e}(x) = \begin{cases}
 -H_e(x_v)  & e=(v,w),\ v \in V, w \in V\cup \{v_{n+1}\}\\
 H_e(x_w) & e=(w,v),\ w \in V \\
 1 & e=(v_0,v)\ v\in I \\
0 & \mbox{otherwise},
\end{cases}
\end{equation}
\noindent
and each $H_e$ is a positive, differentiable, and strictly increasing function $H_e:\R \to \R,$ with $H_e(0)=0$.
\end{itemize}

A typical example of a system verifying (B) is given by metabolic networks
with Hill functions representing reactions, i.e. $H_e(x_v)= \frac{x_v^{p_e}}{K + x_v^{p_e}}$ with $p_e \in \mathbb{N}$, where $K>0$ is the dissociation constant.

A further simplification occurs in the case where $H_e(x_v)$ is the same function $H_v(x_v)$ for all edges $e$ having $v$ as an initial vertex, i.e. such that $e=(v,w)$ for some $w$.
This gives Assumption (C), as follows.
\begin{itemize}
\item[(C)] It holds
\begin{equation*} %\label{eq:S_construction}
S_{v e}(x) = \begin{cases}
 -H_v(x_v)  & e=(v,w),\ v \in V, w \in V\cup \{v_{n+1}\}\\
 H_w(x_w) & e=(w,v),\ w \in V \\
 1 & e=(v_0,v)\ v\in I \\
0 & \mbox{otherwise},
\end{cases}
\end{equation*}
and each $H_v$ is a positive, differentiable, and strictly increasing function $H_v:\R \to \R,$ with $H_v(0)=0$.
\end{itemize}

Under Assumption~(C), the system $\dot x = S(x) f$ can be equivalently re-written as
\begin{equation} \label{eq:dyn-J}
\dot x = J(f) h(x) + \phi \,,
\end{equation}
where $J(f) \in M_{n\times n}$ is defined by
\[
J_{ij}(f)= \begin{cases}
f_{(v_j,v_i)} & \text{if $(v_j,v_i) \in E$} \\
- \sum_{w: (v_i, w) \in \tilde E} f_{(v_i,w)} & \text{if} j=i \\
0 & \text{otherwise},
\end{cases}
\]
where $h(x)$ is a vector of size $n$ given by
$h_i(x) = H_{v_i}(x_{v_i})$ and $\phi$ is a vector of size $n$ given by $\phi_{i} = f_{(v_0,v_i)}$ if $(v_0,v_i) \in \tilde E$, and $\phi_i = 0 $ otherwise.

Finally, the simplest class of LIFE models we consider are \emph{linear} systems, namely systems satisfying Assumption~(C), where each $H_v(x_v)$ is the identity function $H_v(x_v) = x_v$.

\vspace*{4pt}\noindent\textbf{Example 2.1 (continued).}
This example is a linear LIFE system. We can equivalently re-write its dynamics $\dot x = S(x) f$ as $\dot x = J(f) x + \phi$, with
{\small\[ J(f) =
\left( \begin{matrix}
-\! f_{(v_1,v_2)} \!-\! f_{(v_1,v_3)} & 0& 0& f_{(v_4,v_1)}\\
f_{(v_1,v_2)} & -\! f_{(v_2,v_3)}&0&0\\
f_{(v_1,v_3)} & f_{(v_2,v_3)} & -\! f_{(v_3,v_4)} \!-\! f_{(v_3,v_5)} &0\\
0 & 0 & f_{(v_3,v_4)} & -\!f_{(v_4,v_1)}\\
\end{matrix} \right), \;
\phi =
\left(\begin{matrix}
f_{(v_0,v_1)} \\ 0 \\ 0 \\ 0
\end{matrix} \right).
\]}
\section{Equilibria for fixed metabolites} \label{sec:fix-x}
\newcommand\scalemath[2]{\scalebox{#1}{\mbox{\ensuremath{\displaystyle #2}}}}

In this section we consider equilibrium solutions of the system \eqref{met-dyn-LIFE} satisfying Assumption (A).
In general one is interested in conditions guaranteeing existence of an equilibrium and also in conditions necessary
for uniqueness and stability of such an equilibrium.
There are two problems: for fixed metabolite concentrations $x$ find all flux vectors $f$ for which $x$ is an equilibrium,
i.e.  $\dot{x}=S(x)\cdot f = 0$,
and, vice versa, for a fixed flux vector $f$, find all $x$ that are equilibria.
In this section we focus on the first, while the latter is investigated in Section~\ref{sec:fix-f}.

The set of flux vectors for which $x$ is an equilibrium formed by all vectors $f$ that solve the equation $S(x)\cdot f = 0$, i.e.
the nullspace $\mathcal{N}(S(x))$ of $S(x)$.
First, we discuss the dimension of $\mathcal{N}(S(x))$. Then, since fluxes must be positive (to have a correct biological meaning),
we focus on describing the cone $\mathcal{N}(S(x)) \cap (\mathbb{R}_+)^{m}$.
Recall that, under Assumption~(A), we can rewrite $S(x)$ as  $S(x) = \Gamma D(x)$ where $D(x) \in M_{m\times m}$ is a diagonal matrix with
$H_e(x)$'s as entries and $\Gamma$ is obtained from the incidence matrix of $\tilde G$ by removing the first and last rows; in case there are no intakes nor excretions, then $\Gamma$ is the incidence matrix of $G$.
Assuming that $x$ has strictly positive entries, $(A)$ implies that all diagonal elements of $D(x)$ are strictly positive, so that $D(x)$ is invertible. Hence,
for $\Gamma \cdot (D(x) \cdot f) = 0$ to have non trivial solutions, the nullspace of $\Gamma$ must have dimension greater than zero.

We extend one of the results of \cite{ACC} to get:
\begin{proposition}\label{prop:rankS}
Consider the system \eqref{met-dyn-LIFE} with no intakes nor excretions satisfying Assumption (A) and let $G$ be the the associated graph. Let $x \in \R^n$ have strictly positive entries and $S(x) \in M_{n\times m}$ be the stoichiometric matrix, then
$$\rank(S(x)) = n-\ell,$$ where $\ell$ is the number of weakly connected components of $G$.
\end{proposition}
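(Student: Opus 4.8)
The plan is to reduce the computation of $\rank(S(x))$ to a purely combinatorial statement about the incidence matrix of $G$, and then to compute the rank of that incidence matrix via its left null space.

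First I would invoke the factorization $S(x) = \Gamma D(x)$ recalled just before the statement. Since there are no intakes nor excretions, $\Gamma$ is exactly the incidence matrix of $G$, and $D(x)$ is the diagonal matrix $\diag(H_{e_1}(x), \ldots, H_{e_m}(x))$. Because $x$ has strictly positive entries, Assumption (A) guarantees that every diagonal entry $H_e(x)$ is strictly positive, so $D(x) \in M_{m\times m}$ is invertible. Multiplication on the right by an invertible matrix preserves rank, hence $\rank(S(x)) = \rank(\Gamma D(x)) = \rank(\Gamma)$. This removes $x$ and the functions $H_e$ from the problem entirely, leaving us to show $\rank(\Gamma) = n - \ell$.

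Second, I would compute $\rank(\Gamma)$ through its left null space $\{y \in \R^n : y^T \Gamma = 0\}$, whose dimension equals $n - \rank(\Gamma)$ by rank-nullity applied to $\Gamma^T$. By the definition of the incidence matrix, each column of $\Gamma$ corresponds to an edge $e=(v_a,v_b)$ and has exactly one entry $-1$ (in row $a$) and one entry $+1$ (in row $b$), all others zero. Thus the condition $y^T \Gamma = 0$ reads, column by column, $y_b - y_a = 0$, i.e. $y_a = y_b$, for every edge $(v_a,v_b) \in E$. The relation ``$y$ takes equal values at the two endpoints of every edge'' forces $y$ to be constant along any edge irrespective of its orientation, hence constant along every undirected path, and therefore constant on each weakly connected component of $G$. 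Conversely, any vector that is constant on each weakly connected component clearly satisfies all these equations. Assigning one free value per component produces a basis, so the left null space has dimension exactly $\ell$, and consequently $\rank(\Gamma) = n - \ell$. Combining with the first step gives $\rank(S(x)) = n - \ell$.

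The only delicate point, and the step I would be most careful about, is the characterization in the second part: one must justify that \emph{weak} rather than strong connectivity governs the left null space, precisely because the defining equations $y_a = y_b$ are symmetric in the two endpoints and thus insensitive to edge direction. Everything else is routine linear algebra once the factorization and the invertibility of $D(x)$ are in place; in particular, the extension from \cite{ACC} consists essentially in observing that the positivity of $H_e(x)$ on the positive orthant is exactly what makes $D(x)$ invertible and decouples the rank of $S(x)$ from the nonlinear data.
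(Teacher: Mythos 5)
Your proof is correct, and its first half is exactly the paper's argument: factor $S(x) = \Gamma D(x)$, observe that Assumption (A) together with the strict positivity of $x$ makes $D(x)$ invertible, and conclude $\rank(S(x)) = \rank(\Gamma)$. Where you diverge is in the second half: the paper simply cites Proposition 4.3 of \cite{Biggs} for the fact that an incidence matrix of a graph with $\ell$ weakly connected components has rank $n-\ell$, whereas you prove this fact from scratch by computing the left null space $\{y \in \R^n : y^T\Gamma = 0\}$. Your computation is sound: each column of $\Gamma$ imposes $y_a = y_b$ across an edge (the paper's graphs have no loops, so each column genuinely has one $+1$ and one $-1$), these symmetric constraints propagate along undirected paths, so the left null space is spanned by the indicator vectors of the weakly connected components and has dimension $\ell$, giving $\rank(\Gamma) = n - \ell$ by rank--nullity. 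The paper's route is shorter and defers to standard algebraic graph theory; yours is self-contained and has the pedagogical advantage of making explicit \emph{why} weak rather than strong connectivity is the relevant notion here (the constraints $y_a = y_b$ are insensitive to edge orientation), a point the paper only remarks on informally after Proposition \ref{prop:rankS2}. Either version is a complete proof.
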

\begin{proof}
We have $S(x) = \Gamma D(x)$. As shown in Proposition 4.3 of \cite{Biggs} the rank of an incidence matrix $\Gamma$ is $\rank(\Gamma) =n-\ell$.  Since $D(x)$ is full rank we have $\rank(S(x)) = n-\ell$.
\end{proof}

Next we consider systems that contain both intakes and excretions. It was shown in \cite{ACC} that if intakes and excretions are added to a graph satisfying the conditions of Proposition \ref{prop:rankS} then $\rank(S(x)) = n$. We extend this result to get:

\begin{proposition}\label{prop:rankS2}
Consider the system \eqref{met-dyn-LIFE} satisfying Assumption (A) and let $G$ be the the associated graph. Let $x \in \R^n$ have strictly positive entries and $S(x) \in M_{n\times m}$ be the stoichiometric matrix, then
$\rank(S(x)) = n - k$, where $k$ is the number of weakly connected components containing neither intake nor excretion vertices.
\end{proposition}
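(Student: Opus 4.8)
The plan is to reduce the computation to the rank of the reduced incidence matrix $\Gamma$ and then analyze that matrix one weakly connected component at a time. Since $x$ has strictly positive entries, Assumption (A) guarantees $H_e(x)>0$ for every edge $e$, so the diagonal matrix $D(x)$ is invertible; hence $\rank(S(x))=\rank(\Gamma D(x))=\rank(\Gamma)$. It therefore suffices to compute $\rank(\Gamma)$, where $\Gamma$ is obtained from the incidence matrix of $\tilde G$ by deleting the two rows associated with $v_0$ and $v_{n+1}$.

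First I would identify the columns of $\Gamma$ explicitly. A column associated to an internal edge $(v_i,v_j)\in E$ already has zeros in the $v_0$ and $v_{n+1}$ rows, so it is unchanged by the deletion and coincides with the corresponding column of the incidence matrix $\Gamma_G$ of $G$. A column associated to an intake edge $(v_0,w)$, $w\in I$, loses its only $-1$ entry (in the $v_0$ row) and becomes the standard basis vector $e_w$; similarly a column associated to an excretion edge $(w,v_{n+1})$, $w\in X$, becomes $-e_w$. Thus $\Gamma=[\,\Gamma_G \mid B\,]$, where $B$ collects one signed unit vector $\pm e_w$ for each intake or excretion vertex $w$.

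Next I would exploit the block structure coming from the weakly connected components $C_1,\dots,C_\ell$ of $G$. After permuting rows and columns, $\Gamma_G$ is block diagonal with one block per component, and each added column $\pm e_w$ is supported on the single vertex $w$, hence lies entirely within the block of the component containing $w$. Consequently $\Gamma$ itself is block diagonal by components, and $\rank(\Gamma)=\sum_i \rank(\Gamma_{C_i})$. For a component $C$ on $n_C$ vertices, Proposition \ref{prop:rankS} applied to the single connected subgraph gives that its incidence block has rank $n_C-1$, with column space exactly $\{y\in\R^{n_C}:\sum_{v\in C}y_v=0\}$ and left null space spanned by the indicator $\mathbf 1_C$.

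Finally I would determine, for each component, whether the extra columns raise the rank. If $C$ contains no intake and no excretion vertex, no column of $B$ touches $C$ and the block rank stays $n_C-1$. If $C$ contains at least one intake or excretion vertex $w$, the added vector $\pm e_w$ satisfies $\sum_{v\in C}(\pm e_w)_v=\pm 1\neq 0$, so it lies outside the zero-sum column space of the incidence block and raises the block rank to $n_C$, while no further column can exceed $n_C$. Summing over components, each of the $k$ components with neither intake nor excretion contributes $n_C-1$ and every other contributes $n_C$, giving $\rank(\Gamma)=\sum_C n_C - k = n-k$, as claimed. The only delicate point is the rank bookkeeping in the mixed case, namely verifying that a single signed unit vector already saturates the block rank and that the zero-sum characterization of the incidence-block column space is correct; everything else is routine block algebra.
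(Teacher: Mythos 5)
Your proof is correct, and its skeleton matches the paper's: factor $S(x)=\Gamma D(x)$ with $D(x)$ invertible (valid since $x$ has strictly positive entries, so all $H_e(x)>0$), split $\Gamma$ block-diagonally over the weakly connected components of $G$, observe that a component with an intake or excretion vertex contributes a block of full rank $n_C$ while every other component contributes $n_C-1$, and sum to get $n-k$. Where you genuinely depart is in the key lemma for the full-rank case. The paper does not argue that step at all: it cites a result from \cite{ACC} (weak connectedness plus an intake vertex implies $\rank(S(x))=n$, with ``the same argument'' asserted for excretion vertices) and builds the block bookkeeping around that citation. You instead prove the step directly: after deleting the $v_0$ and $v_{n+1}$ rows, an intake edge becomes the column $e_w$ and an excretion edge becomes $-e_w$; the column space of a connected component's incidence block is exactly the zero-sum hyperplane $\{y\in\R^{n_C}:\mathbf 1^T y=0\}$ (containment because every incidence column sums to zero, equality by the dimension count $n_C-1$); and $\pm e_w$ has entry-sum $\pm 1\neq 0$, so appending it lifts the block rank to $n_C$, which no further column can exceed. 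This buys a fully self-contained argument that exposes the mechanism---intake and excretion columns escape the zero-sum column space---and it re-derives the lemma quoted from \cite{ACC} as a byproduct; the cost is slightly more linear algebra than the paper's appeal to prior work. One small bookkeeping note: the exact zero-sum characterization and the left-null-space claim are not literally part of Proposition \ref{prop:rankS}, which only supplies the rank, but as you indicate they follow in one line from that rank together with the fact that each incidence column has one $+1$ and one $-1$.
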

\begin{proof}

It was shown in \cite{ACC} that if $G$ is weakly connected and contains an intake vertex then $\rank(S(x)) = n$. The same argument from \cite{ACC} can be also used for an excretion vertex and so if $G$ is weakly connected and contains an excretion vertex we also have $\rank(S(x)) = n$. In the case where $G$ is not weakly connected $S(x)$ can be rewritten as a block diagonal matrix $$S(x) = \begin{pmatrix}
S_1(x) & & 0\\
& \ddots & \\
0& & S_\ell(x)
\end{pmatrix}$$ where each diagonal element $S_i(x)$ is the stoichiometric matrix of a weakly connected component. If $S_i(x)$ contains an intake or excretion vertex then $rank(S_i(x))\break = n_i$, where $n_i$ is the number of metabolites in $S_i(x)$. If $S_i(x)$ contains neither intake nor excretion vertices then $rank(S_i(x)) = n_i - 1$. This implies that $S(x)$ has $n_i$ linearly independent rows for each $S_i(x)$ with intake or excretion vertices and $n_i - 1$ linearly independent rows for each $S_i(x)$ with neither intake nor excretion vertices. Let $k$ represents the number of $S_i(x)$ with neither intake nor excretion vertices, the total number of linearly independent rows in $S$ is now expressed $(\sum_{i=1}^\ell (n_i)) - k  = n - k$. Thus $\rank(S(x)) = n-k$.
\end{proof}

Notice that the rank of $S(x)$ depends on the number of weakly connected components of the graph, which is the same irrespective of the orientation of edges. However, when we focus on the existence of non-trivial positive flows admitting an equilibrium with strictly positive entries, the orientation of edges does matter, as we can see e.g.\ from Proposition~\ref{prop:A_neccessary}, where a necessary condition is given for existence of equilibria, in terms of existence of suitable paths.

\subsection{Network flows}

The problem of finding positive flows admitting an equilibrium with strictly positive entries has been extensively studied in the operations research literature under the name of `network flow' problems \cite{heineman}. In network flow problems one considers a directed graph where edges represent flows between the vertices. The maximum flow which an edge can support is called the capacity of the edge. In addition to capacity each edge may also have a cost associated to it. Network flow problems assume that the system is at equilibrium with respect to the vertices i.e. that the flow entering and leaving a vertex must be the same. A flow that satisfies this assumption is called a feasible flow. Finding feasible flows is the same as finding flows in the cone $\mathcal{N}(S(x)) \cap (\mathbb{R}_+)^{m}$ with the further constraint that each flux must be less than its capacity.

A flow is a mapping from $f:E \to (\mathbb{R}_+)^m$ that satisfies $0 \leq f(v_i,v_j) \leq c(v_i,v_j)$ (where $c(v_i,v_j)$ is the capacity of the edge $(v_i,v_j)$) and $\sum_{(v_i,v_j) \in E} f(v_i,v_j) - \sum_{(v_j,v_k) \in E} f(v_j,v_k) = 0$ \cite{malhotra}.
One of the most common network flow problems is to find the maximum flow of the network, i.e. the largest amount of total flow from a source to a sink. In \cite{ford}
the authors consider a network which contains exactly one source ($v_0$) and one sink ($v_{n+1}$). They then prove the following result, known as max-flow min-cut theorem, which characterizes the maximum flow as the minimum cut cacapity, where a cut set is a set of edges whose removal disconnects the source from the sink, and the capacity of a cut is the sum of the capacities of its edges.

\begin{proposition}[Max-Flow Min-Cut Theorem]
The maximum flow value obtainable in a network is the minimum capacity of all cut sets which disconnect $v_0$ and $v_{n+1}$.
\end{proposition}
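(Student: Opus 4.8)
The plan is to prove the two inequalities that together pin the maximum flow value to the minimum cut capacity. Writing $\mathrm{val}(f)$ for the value of a feasible flow $f$ (the net flow leaving the source $v_0$), and for a partition $\tilde V = A \cup B$, $A\cap B = \emptyset$, with $v_0 \in A$ and $v_{n+1}\in B$ writing $c(A,B) = \sum_{u\in A,\, v\in B,\, (u,v)\in \tilde E} c(u,v)$ for the capacity of the associated cut, I would first establish weak duality, $\mathrm{val}(f) \le c(A,B)$ for every feasible flow $f$ and every such cut, and then exhibit a single flow and a single cut for which equality holds. Since every edge cut set separating $v_0$ from $v_{n+1}$ contains the forward edges of some vertex partition $(A,B)$, it suffices to work with partition cuts.

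For weak duality, the key identity is that the flow value can be measured across any cut: summing the conservation equation $\sum_{w:(w,v)\in\tilde E} f(w,v) - \sum_{w:(v,w)\in\tilde E} f(v,w) = 0$ over all $v \in A \setminus \{v_0\}$ and adding the source term telescopes to
\[
\mathrm{val}(f) = \sum_{u\in A,\, v\in B} f(u,v) - \sum_{u\in B,\, v\in A} f(u,v).
\]
Since $0 \le f(u,v) \le c(u,v)$ edgewise, the first sum is at most $c(A,B)$ and the second is nonnegative, giving $\mathrm{val}(f) \le c(A,B)$. In particular $\max_f \mathrm{val}(f) \le \min_{(A,B)} c(A,B)$.

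For the reverse inequality I would use the residual-graph and augmenting-path argument of \cite{ford}. Given a feasible flow $f$, build the residual graph on $\tilde V$ containing a forward arc $(u,v)$ of residual capacity $c(u,v)-f(u,v)$ whenever this is positive, and a backward arc $(v,u)$ of residual capacity $f(u,v)$ whenever $f(u,v)>0$. A directed $v_0$--$v_{n+1}$ path in the residual graph (an augmenting path) allows one to push the minimum residual capacity along it, strictly increasing $\mathrm{val}(f)$; hence a flow of maximum value admits no augmenting path. For such a flow, let $A$ be the set of vertices reachable from $v_0$ in the residual graph and $B$ its complement. Then $v_{n+1}\in B$, every original edge from $A$ to $B$ is saturated, i.e. $f(u,v)=c(u,v)$, and every original edge from $B$ to $A$ carries zero flow; plugging these into the cut identity above yields $\mathrm{val}(f) = c(A,B)$. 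Combined with weak duality, this $f$ is a maximum flow, this $(A,B)$ is a minimum cut, and their common value proves the theorem.

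The main obstacle is guaranteeing that a maximum flow exists, so that the no-augmenting-path characterization can be applied to an actual maximizer. For integer or rational capacities each augmentation raises $\mathrm{val}(f)$ by a fixed positive quantity, so the augmenting-path process terminates and produces the maximizer directly; for general real capacities one instead observes that the set of feasible flows is a compact subset of $(\R_+)^m$ (closed and bounded by the capacities) and $\mathrm{val}$ is continuous, so a maximizer exists by the Weierstrass theorem, after which the residual-cut construction applies verbatim. Since this termination and existence step is precisely the content established in \cite{ford}, I would cite it rather than reprove it.
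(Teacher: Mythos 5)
Your proof is correct. The paper itself offers no proof of this proposition---it is quoted as a classical result from \cite{ford}---so there is no internal argument to compare against; what you have written is precisely the standard argument of that reference: weak duality via the cut identity obtained by summing the conservation constraints over one side of the partition, then the residual-graph/augmenting-path construction showing that a maximum flow saturates the cut determined by the set of residual-reachable vertices, together with the observation that edge cut sets and vertex-partition cuts have the same minimum capacity. Your extra care with the existence of a maximizer (termination of augmentation for rational capacities, compactness of the feasible set plus continuity of the value functional for real capacities) is warranted and is the one point a casual reading of the augmenting-path argument glosses over, since that iteration need not terminate when capacities are irrational.
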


An implication of the max-flow min-cut theorem is that a feasible flow exists if there is a path from $v_0$ and $v_{n+1}$. The following proposition extends this relationship:

\begin{proposition}\label{prop:feasibleflows}
Given a LIFE system with graph $\tilde G$ and stoichiometric matrix $S(x)$ satisfying (A), fix an $x \in (\R_+)^n$ with strictly positive entries and for every $v \in I$ fix $\bar f_{v_0,v}>0$, i.e. fix arbitrary values for the intake flows.
There exists  $f \in \mathcal{N}(S(x)) \cap (\mathbb{R}_+)^{m}$
such that $f_{v_0,v} = \bar f_{v_0,v}$ for all $v \in I$ if and only if for each $v \in I$ there exists a path to $X$.
\end{proposition}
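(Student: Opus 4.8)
The plan is to exploit the factorization $S(x) = \Gamma D(x)$ to convert the LIFE feasibility question into a classical conservation-flow problem on $\tilde G$, and then to settle the two implications separately: necessity by a cut argument and sufficiency by an explicit path-routing construction. First I would reduce to an equivalent problem. Since $x$ has strictly positive entries, Assumption (A) makes every diagonal entry $H_e(x)$ of $D(x)$ strictly positive, so $D(x)$ is invertible and maps $(\mathbb{R}_+)^m$ bijectively onto itself. Setting $g = D(x) f$, the condition $S(x) f = \Gamma D(x) f = 0$ becomes $\Gamma g = 0$, the sign condition $f \ge 0$ becomes $g \ge 0$, and the intake constraint $f_{(v_0,v)} = \bar f_{(v_0,v)}$ becomes $g_{(v_0,v)} = H_{(v_0,v)}(x)\,\bar f_{(v_0,v)} > 0$. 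Because $\Gamma$ is the incidence matrix of $\tilde G$ with the rows of $v_0$ and $v_{n+1}$ removed, the equation $(\Gamma g)_w = 0$ is exactly Kirchhoff conservation at each internal vertex $w \in V$, counting intake edges as inflow and excretion edges as outflow, with no constraint imposed at $v_0$ or $v_{n+1}$. Thus it suffices to produce, or to rule out, a nonnegative $g$ conserved at every $w \in V$ with the prescribed positive values on the intake edges.

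For necessity I would argue by contradiction using a reachable set as a zero-capacity cut. Suppose such a $g$ exists but some $v^{\ast} \in I$ has no directed path to $X$, and let $W \subseteq V$ be the set of vertices reachable from $v^{\ast}$ (with $v^{\ast} \in W$). By maximality $W$ has no outgoing edge: an internal edge $(a,b)$ with $a \in W$ forces $b \in W$, and an excretion edge $(a,v_{n+1})$ with $a \in W$ would give $a \in W \cap X = \emptyset$. Summing the conservation identities over $W$, every edge internal to $W$ cancels and every outgoing edge is absent, so $\sum_{w \in W}(\Gamma g)_w$ equals the total inflow across the boundary of $W$, which is at least $g_{(v_0,v^{\ast})} > 0$; this contradicts $\Gamma g = 0$. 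This is the same mechanism as in Proposition \ref{prop:A_neccessary}, now phrased as a cut with empty outflow separating $v^{\ast}$ from the sink.

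For sufficiency I would build $g$ by superposition of single-path flows. Assuming each $v \in I$ admits a simple directed path $P_v$ from $v$ to some excretion vertex $y_v \in X$, I would add the amount $c_v := H_{(v_0,v)}(x)\,\bar f_{(v_0,v)}$ to the intake edge $(v_0,v)$, to every edge of $P_v$, and to the excretion edge $(y_v,v_{n+1})$, and then sum over $v \in I$. Each such path flow contributes equal in- and out-flow at every internal vertex it traverses, so conservation holds at all $w \in V$ by additivity; $g \ge 0$ since all contributions are nonnegative; and because only the route for $v$ uses the intake edge $(v_0,v)$, we obtain $g_{(v_0,v)} = c_v$ exactly. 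Finally I recover $f_e = g_e / H_e(x)$, which lies in $\mathcal{N}(S(x)) \cap (\mathbb{R}_+)^m$ and matches the prescribed intakes.

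The routine parts are the $D(x)$-rescaling and the additive bookkeeping of the path flows; the point requiring care is the necessity direction, where the crux is to recognize that, in the absence of capacity bounds, the only obstruction to routing flow to the sink is reachability, and to encode this as the empty-outflow cut $W$. Once $W$ is identified the contradiction is immediate.
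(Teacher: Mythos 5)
Your proof is correct, and its opening move coincides with the paper's: since $x$ has strictly positive entries, $D(x)$ is an invertible positive diagonal matrix, so the substitution $g = D(x)f$ (equivalently $f_e = \varphi_e / H_e(x)$ in the paper's notation) turns the question into a pure conservation-flow problem on $\tilde G$. From there the two arguments genuinely diverge. The paper stays inside network-flow theory: it assigns capacity $\bar f_{v_0,v}$ to each intake edge and infinite capacity to every other edge, and invokes the Max-Flow Min-Cut Theorem stated just before the proposition; under the path hypothesis every finite-capacity cut consists only of intake edges and must contain all of them, so the minimum cut is the full set of intake edges, and the maximum flow therefore saturates each intake edge, producing the desired equilibrium flow. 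For necessity the paper simply asserts the standard fact that an intake vertex with no path to the sink can carry no flow. You avoid capacities and duality altogether: sufficiency is an explicit superposition of single-path flows, one per intake vertex (each path lies entirely in $V$ since $v_0$ has no incoming and $v_{n+1}$ no outgoing edges, so the bookkeeping at intake edges works exactly as you claim), and necessity is proved by summing the Kirchhoff identities over the reachable set $W$, whose outflow is empty --- the same mass-accumulation mechanism as in Proposition \ref{prop:A_neccessary}. What each approach buys: yours is elementary and self-contained, and it actually supplies the justification for the zero-flow claim that the paper leaves implicit; the paper's is shorter once the min-cut theorem is available, and it deliberately showcases the connection between LIFE systems and the network-flows literature, which is the point of that subsection. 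A small side benefit of the paper's formulation is that its correspondence also describes all equilibrium flows with $f_{v_0,v} \le \bar f_{v_0,v}$, whereas your construction targets exactly the prescribed intake values, which is all the proposition requires.
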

\begin{proof}
Consider the maximum flow problem on the graph $\tilde G$, where edges $(v_0,v)$ have capacity $\bar f_{v_0,v}$, and all other edges have infinite capacity.
The feasible flows $\varphi$ for this network are in one-to-one correspondence with the equilibrium flows $f \in \mathcal{N}(S(x)) \cap (\mathbb{R}_+)^{m}$ such that $f_{v_0,v} \le \bar f_{v_0,v}$ for all $v \in I$; the correspondence is simply given by $f_{v_0,v} = \varphi_{v_0,v} / H_{v_0,v}(x)$ for all $v\in I$ and $f_{v,w} = \varphi_{v,w} / H_{v,w}(x)$ for all $w \in V$.
If for all $v \in I$ there is a path from $v$ to $X$ (and hence a path from $v$ to $v_{n+1}$),
the minimum cut is the removal of all edges $(v_0,v)$.
The maximum flow $\varphi^*$ then has $\varphi^*_{v_0, v} = \bar f_{v_0,v}$, thus also ensuring the existence of an equilibrium flow $f^*$ satisfying the same.

If for some $v_i \in I$ there is no path from $v$ to $X$, then all feasible flows $\varphi$ satisfy $\varphi_{v_0, v}  = 0$, and hence all equilibrium flows $f$ satisfy $f_{v_0, v}  = 0$ which contradicts the assumption.
\end{proof}

\subsection{Extreme pathway algorithm for calculating the positive basis}\label{sec:algorithm}

There are many standard methods for computing a basis of the nullspace of a matrix   \cite{Meyer}, and hence to describe $\mathcal{N}(S(x))$.
Since we are interested in positive fluxes, we are rather interested in the cone $\mathcal{N}(S(x)) \cap (\mathbb{R}_+)^{m}$. One method to describe this cone is to look for a positive basis, which
is a minimal set of vectors generating the whole cone via linear combinations with positive scalars, called positive combinations.
The use of positive combinations ensures that all generated vectors belong to the cone.
Minimality is equivalent to ask for the vectors to be positively linearly independent, i.e. no vector can be expressed as a positive combination of the others. Since we allow only positive combinations (and not every linear combination),
typically a positive basis has more vectors than a traditional basis.
In \cite{Schilling} the authors prove that a flux cone has a positive basis, and that the vectors of the positive basis are unique up to multiplication by a positive scalar.

Finding a positive basis for a cone is not as easy as finding a traditional basis. In \cite{palsson}, Palsson describes a method to find a positive basis by using extreme pathways of a metabolic network. %In rough words, the latter form a minimal set of paths encompassing all biologically feasible flux vectors.
The extreme pathways are a unique set of positively independent flux vectors that represent the edges of the cone $\mathcal{N}(S(x)) \cap (\mathbb{R}_+)^{m}$ \cite{palsson,Schilling}.

Here we summarize a method to build extreme pathways described by \cite{Schilling}.
We start considering the equation $V_0 \cdot S(x)^T = C_0$,
and the solution given by $V_0=I_{m}$, the $m\times m$ identity matrix, $S(x)$ the stoichiometric matrix
%(we drop the $x$ dependence for simplicity)
and $C_0=S(x)^T$. At each iteration new matrices
$V_i\in M_{n_i\times m}$,  $C_i\in M_{n_i\times n}$ are defined which satisfy $V_i \cdot S(x)^T = C_i$.

\begin{algorithm}\label{alg:Nate+Palsson}
The algorithm consists of the following steps:

%\begin{enumerate}
\vspace*{4pt}\noindent\textbf{Step 1.} Select from $C_i$ the first non-zero column (lexicographically), whose corresponding metabolite
is neither an intake vertex nor an excretion vertex, say $j_i$. If no such column exists jump to Step 5.

\vspace*{4pt}\noindent\textbf{Step 2.} Add to $C_i$ all possible new rows, obtained using positive linear combinations of two other rows, which have a zero on the column $j_i$. Then define $C_{i+1}$ by removing the old rows used to generate the new ones.
Notice that $C_{i+1}$ has all zeros in column $j_i$.

\noindent\textbf{Step 3.} Define $V_{i+1}$ by adding to $V_i$ the rows generated by the same combination as those of Step 2
(to keep the validity of the equation $V \cdot S(x)^T = C$) and removing the old rows.

\noindent\textbf{Step 4.} Remove positively linearly dependent rows from $V_{i+1}$. Remove the corresponding rows from $C_{i+1}$.

\noindent\textbf{Step 5.} Repeat the steps 1-4 now considering first excretion vertices and then intake vertices until all columns are $0$.
%\end{enumerate}
\end{algorithm}
If the algorithm stops at step $m$, then $C_{m}=0$ (the matrix with all zero entries).
The rows of $V_{m}$ are the vectors of a positive basis.
Notice also that the number of rows $n_i$ may increase or decrease during the various steps.

Originally in \cite{palsson} the extreme pathways method was proposed for stoichiometric matrices not dependent on $x$. We apply it here to the most general case of systems \eqref{met-dyn-LIFE} with Assumption~(A), by fixing a desired metabolite variables vector $x \in (\R_+)^n$ and using the extreme pathways method to characterize all fluxes vectors with positive entries such that the corresponding LIFE system \eqref{met-dyn-LIFE} admits $x$ as an equilibrium.

To illustrate the process we report an example using the Reverse Cholesterol Transport Network (RCT) from \cite{LIFE}. The RCT network is shown in Figure \ref{fig:RCT Network}.

\begin{figure}[h]
\begin{center}
\usetikzlibrary{shapes.geometric, arrows, decorations.markings}

\usetikzlibrary{shapes.geometric, arrows, decorations.markings}
\tikzstyle{automated} = [rectangle, rounded corners, minimum width=3cm, minimum height=1cm,text centered, draw=black,  fill=cyan!60]
\tikzstyle{metab} = [rectangle, rounded corners, minimum width=3cm, minimum height=1cm,text centered, draw=black,  fill=cyan!40]
\tikzstyle{metab1} = [rectangle, rounded corners, minimum width=1cm, minimum height=0.5cm,text centered, draw=black,  fill=cyan!40]
\tikzstyle{io} = [ellipse , rounded corners, minimum width=3cm, minimum height=1cm,text centered, draw=black,  fill=red!40]
\tikzstyle{io2} = [ellipse , rounded corners, minimum width=0cm, minimum height=0cm,text centered, draw=gray!20,  fill=gray!20]
\tikzstyle{human} = [trapezium, trapezium left angle=70, trapezium right angle=110, minimum width=3cm, minimum height=1cm, text centered, draw=black, fill=orange!60]
\tikzstyle{label} = [rectangle, rounded corners, minimum width=3cm, minimum height=1cm,text centered, draw=black, fill=gray!20]
\tikzstyle{label1} = [rectangle, rounded corners, minimum width=20cm, minimum height=12cm, draw=black, fill=gray!10]
\tikzstyle{label2} = [rectangle, rounded corners, minimum width=30cm, minimum height=8cm, draw=black, fill=gray!50]
\tikzstyle{caption} = [rectangle, minimum width=3cm, minimum height=1cm,text centered, draw=black, fill=gray!10]
\tikzstyle{process1} = [rectangle, minimum width=3cm, minimum height=1cm, text centered, draw=black, fill=orange!20]
\tikzstyle{process} = [rectangle, minimum width=3cm, minimum height=1cm, text centered, draw=black, fill=orange!40]
\tikzstyle{decision} = [rectangle, minimum width=3cm, minimum height=1cm, text centered, draw=black, fill=green!20]
\tikzstyle{flow} = [circle, minimum width=0.5cm, text centered, draw=black, fill=green!30]
\tikzstyle{flow1} = [rectangle, minimum width=1cm, text centered, draw=black, fill=green!30]
\tikzstyle{arrow} = [thick, dashed,->,>=stealth]
\tikzstyle{vecArrow} = [thick, decoration={markings,mark=at position
   1 with {\arrow[semithick]{open triangle 60}}},
   double distance=1.4pt, shorten >= 5.5pt,
   preaction = {decorate},
   postaction = {draw,line width=1.4pt, white,shorten >= 4.5pt}]
\tikzstyle{innerWhite} = [semithick, white,line width=1.4pt, shorten >= 4.5pt]

\tikzstyle{automated} = [rectangle, rounded corners, minimum width=3cm, minimum height=1cm,text centered, draw=black,  fill=cyan!60]

\tikzstyle{results} = [ellipse, text centered, draw=black,  fill=cyan!60]

\tikzstyle{human} = [trapezium, trapezium left angle=70, trapezium right angle=110, minimum width=3cm, minimum height=1cm, text centered, draw=black, fill=orange!60]
\tikzstyle{label} = [rectangle, rounded corners, minimum width=3cm, minimum height=1cm,text centered, draw=black, fill=gray!40]

\tikzstyle{caption} = [rectangle, minimum width=3cm, minimum height=1cm,text centered, draw=black, fill=gray!10]

\tikzstyle{process} = [rectangle, minimum width=3cm, minimum height=1cm, text centered, draw=black, fill=orange!30]

\tikzstyle{decision} = [diamond, minimum width=3cm, minimum height=1cm, text centered, draw=black, fill=green!20]

\tikzstyle{answer} = [circle, minimum width=1.5cm, text centered, draw=black, fill=orange!40]

\tikzstyle{arrow} = [ultra thick,->,>=stealth]

\tikzstyle{vecArrow} = [thick, decoration={markings,mark=at position
   1 with {\arrow[semithick]{open triangle 60}}},
   double distance=1.4pt, shorten >= 5.5pt,
   preaction = {decorate},
   postaction = {draw,line width=1.4pt, white,shorten >= 4.5pt}]
\tikzstyle{innerWhite} = [semithick, white,line width=1.4pt, shorten >= 4.5pt]

\begin{tikzpicture}[node distance=2cm,thick,scale=0.5, every node/.style={transform shape}]

\node (label1) [label1, text width=12cm] {\Large};
\node (source) [io2, left of= label1,xshift=-4cm, yshift=-4cm, text width=0cm] { };
\node (source2) [io2, right of= source,xshift=10cm, yshift=0cm, text width=0cm] {};
\node (source1) [io, right of= source,xshift=4cm, yshift=0cm, text width=9.5cm] {\huge $v_0$};

\node (x1) [metab, above of= source, xshift=0cm, yshift=1cm, text width=4cm] {\huge $v_1$};
\node (x2) [metab, above of= source1, xshift=0cm, yshift=1cm, text width=4cm] {\huge $v_2$};
\node (x3) [metab, above of= source2, xshift=0cm,yshift=1cm, text width=4cm] {\huge $v_3$};
\node (x4) [metab, above of= x2, xshift=-2cm,yshift=1cm, text width=6cm] {\huge $v_4$};
\node (x5) [metab, above of= x4, xshift=-4cm,yshift=1cm, text width=4cm] {\huge $v_5$};
\node (x6) [metab, above of= x4, xshift=4cm,yshift=1cm, text width=4cm] {\huge $v_6$};

\node (f1) [flow1, above of= source, xshift=-.8cm, yshift=-.6cm] {\huge $f_{(v_0,v_1)}$};
\node (f2) [flow1, above of= source1, xshift=-.8cm, yshift=-.6cm] {\huge $f_{(v_0,v_2)}$};
\node (f3) [flow1, above of= source2, xshift=.8cm, yshift=-.6cm] {\huge $f_{(v_0,v_3)}$};
\node (f4) [flow, above of= x1, xshift=.8cm, yshift=-.5cm] {\huge $f_{(v_1,v_4)}$};
\node (f5) [flow, above of= x2, xshift=-1.7cm, yshift=-.8cm] {\huge $f_{(v_2,v_4)}$};
\node (f6) [flow, above of= x3, xshift=-3.5cm, yshift=0cm] {\huge $f_{(v_3,v_4)}$};
\node (f7) [flow, above of= x4, xshift=-3cm, yshift=-.6cm] {\huge $f_{(v_4,v_5)}$};
\node (f8) [flow, above of= x4, xshift=3cm, yshift=-.6cm] {\huge $f_{(v_4,v_6)}$};
\node (f9) [flow, right of= x5, xshift=2cm, yshift=-.7cm] {\huge $f_{(v_5,v_6)}$};
\node (f10) [flow, right of= x6, xshift=1.8cm, yshift=-0.8cm] {\Large $f_{(v_6,v_{n+1})}$};

\node (sink) [io, right of= x6, xshift=2cm, yshift=-2cm, text width=3cm] {\huge $v_{n+1}$};

\node (label3) [label, right of= x4, xshift=7cm, yshift=-1cm, text width=4cm] {\huge metabolic network};

\draw [vecArrow] (source) -- (x1);
\draw [vecArrow] (x1) -- (x4);
\draw [vecArrow] (x2) -- (x4);
\draw [vecArrow] (x3) -- (x4);
\draw [vecArrow] (source) -- (x1);
\draw [vecArrow] (source1) -- (x2);
\draw [vecArrow] (source2) -- (x3);
\draw [vecArrow] (x4) -- (x5);
\draw [vecArrow] (x4) -- (x6);
\draw [vecArrow] (x5) -- (x6);
\draw [vecArrow] (x6) -- (sink);

\end{tikzpicture}

\caption{Reverse Cholesterol Transport Network from \cite{LIFE}. This network contains 6 vertices which represent metabolites, 10 edges which represent fluxes and 2 virtual vertices $v_0,v_{n+1}$. There are three intake vertices $v_1,v_2,v_3$ and 1 excretion vertex $v_{6}$.}
\label{fig:RCT Network}
\end{center}
\end{figure}
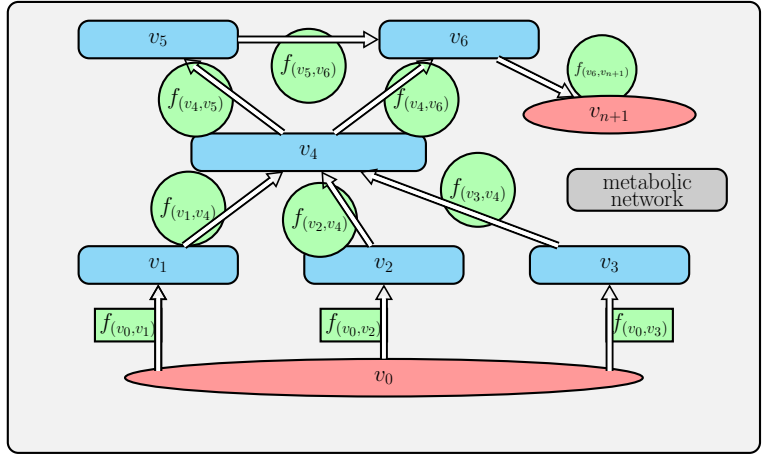

\begin{example}{Finding Extreme Pathways of RCT network}\label{ex:extremes}

We use Palsson's algorithm for finding extreme pathways on the stoichiometric matrix from the RCT network. The stoichiometric matrix for RCT is given by:
\begin{equation*}\label{eq:stoichiometric_matrix}
\scalemath{.93}{
S(x)=\left(\begin{array}{cccccccccc}
 1 & 0 & 0 & -x1 & 0 &0 &0&0&0& 0\\
  0 & 1 & 0 & 0 & -x2 &0 &0&0&0&0\\
 0 & 0 & 1 & 0 & 0 &-x3 &0&0&0&0\\
 0 & 0 & 0 & x1 & x2 &x3 &-x4&-x4&0&0\\
 0 & 0 & 0 & 0 & 0 &0 &x4&0&-x5&0\\
 0 & 0 & 0 & 0 & 0 &0 &0&x4&x5&-x6\\
\end{array}\right)
}
\end{equation*}

For space, the complete calculations are continued in the Appendix.
\end{example}

\subsection{Extreme pathways provide a positive basis}

Let us first recall Farkas' Lemma \cite{Gale}. Here we use the notation $x\geq 0$ to indicate that every entry in the vector $x$ is positive.
\begin{lemma}[Farkas' Lemma]\label{lem:Farka}
Let $A \in M_{n \times m}$ and $b \in R^n$. For the equation $Ax = b$ exactly one is true:\\
1. There exists $x \in \mathbb{R}^m$ such that $Ax=b$ and $x\geq 0$. \\
2. There exists $y \in \mathbb{R}^n$ such that $A^Ty\geq 0$ and $b^T y<0$.\\
\end{lemma}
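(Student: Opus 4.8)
The plan is to prove the ``exactly one'' statement by splitting it into two parts: first, that conditions 1 and 2 cannot hold simultaneously, and second, that if condition 1 fails then condition 2 must hold. The first part is a direct computation. Suppose both held, so that $Ax = b$ with $x \geq 0$ and $A^T y \geq 0$ with $b^T y < 0$. Then I would write $b^T y = (Ax)^T y = x^T (A^T y)$, and since $x \geq 0$ and $A^T y \geq 0$ entrywise, this is a sum of nonnegative products, hence nonnegative, contradicting $b^T y < 0$. This rules out simultaneity and leaves only the ``at least one'' direction to establish.

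For the substantive part I would introduce the set $C = \{ Ax : x \in \R^m,\ x \geq 0 \}$, which is precisely the set of nonnegative combinations of the columns $a_1,\ldots,a_m$ of $A$, i.e.\ the convex cone finitely generated by those columns. Condition 1 holds exactly when $b \in C$, so I assume condition 1 fails, meaning $b \notin C$, and aim to produce the certificate $y$ via the separating hyperplane theorem. This needs $C$ to be closed and convex: convexity is immediate because $C$ is a cone, and I would invoke the fact that a finitely generated cone is closed. Given closedness and $b \notin C$, strict separation supplies a vector $y$ with $y^T b < \inf_{z \in C} y^T z$.

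The last step extracts condition 2 from this separation. Since $0 \in C$, the infimum is at most $0$; and since $C$ is a cone, if $y^T z < 0$ for some $z \in C$ then scaling $z$ by arbitrarily large positive factors would drive $\inf_{z \in C} y^T z$ to $-\infty$, contradicting the finite lower bound $y^T b$. Hence $y^T z \geq 0$ for all $z \in C$, the infimum equals $0$, and therefore $b^T y < 0$. Applying $y^T z \geq 0$ to $z = A e_j$ (the $j$th column of $A$) for each $j$ yields $(A^T y)_j \geq 0$, i.e.\ $A^T y \geq 0$. Thus $y$ satisfies condition 2, as required.

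The main obstacle is the closedness of the finitely generated cone $C$, which the separating hyperplane theorem requires and which is \emph{not} automatic for general convex cones. I would establish it either through the Minkowski--Weyl structure theorem or by induction on the number of generators, showing that adjoining one ray to a closed cone preserves closedness. An alternative route that avoids the closedness issue altogether is Fourier--Motzkin elimination: one proves the lemma by induction on $m$, eliminating one variable of the system $Ax = b,\ x \geq 0$ at a time while tracking the nonnegative multipliers, which directly produces the certificate $y$ whenever the system is infeasible. I would present the separating-hyperplane argument as the primary proof and note Fourier--Motzkin as the self-contained fallback.
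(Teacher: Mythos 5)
Your proof is correct, but note that the paper does not prove this lemma at all: Farkas' Lemma is recalled as a classical result with a citation to Gale--Kuhn--Tucker and is used purely as a black box in the proof of Proposition \ref{prop:Positive_Basis}. What you supply is the standard textbook proof: the easy exclusivity direction via $b^T y = (Ax)^T y = x^T (A^T y) \ge 0$, and the substantive direction by strict separation of $b$ from the cone $C = \{Ax : x \ge 0\}$, with the genuine technical crux --- closedness of a finitely generated cone --- correctly identified and addressed (induction on the number of generators, Minkowski--Weyl, or the Fourier--Motzkin route, all of which work). Your extraction of condition 2 from the separation is exactly right: the cone property forces $\inf_{z \in C} y^T z = 0$, giving $b^T y < 0$, and testing against $z = Ae_j$ gives $A^T y \ge 0$. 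One small repair: your justification ``convexity is immediate because $C$ is a cone'' is not right as stated, since a cone need not be convex; convexity of $C$ follows instead because it is the image of the convex set $(\R_+)^m$ under the linear map $x \mapsto Ax$, or equivalently because $C$ is closed under addition and nonnegative scaling. As for what each approach buys: the paper's bare citation is adequate for its purposes, since only the statement is needed downstream; your argument makes the development self-contained, at the cost of carrying the closedness lemma, which is the one nontrivial ingredient and which you rightly refuse to wave away.
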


\begin{proposition}\label{prop:Positive_Basis}
The extreme pathways of $S(x)$ form a positive basis of $\mathcal{N}(S(x)) \cap (\mathbb{R}_+)^{m}$.
\end{proposition}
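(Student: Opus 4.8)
The plan is to verify the three defining properties of a positive basis directly for the output of Algorithm~\ref{alg:Nate+Palsson}, maintaining two invariants throughout its execution: the identity $V_i\,S(x)^T = C_i$ and the non-negativity of every entry of $V_i$. Both hold at initialization, since $V_0 = I_m$ has non-negative entries and $C_0 = S(x)^T$, and both are preserved by Steps~2--4, which only form \emph{positive} linear combinations of existing rows and delete rows. Consequently, at termination one has $C_m = 0$, whence $S(x)\,V_m^T = 0$ and every row of $V_m$ lies in $\mathcal{N}(S(x))$; together with non-negativity this shows each extreme pathway belongs to the cone $\mathcal{N}(S(x)) \cap (\mathbb{R}_+)^{m}$.

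The core of the argument is the \emph{generation} property, which I would establish by induction on the elimination steps. For each stage $i$, let $J_i$ be the set of metabolite-columns already zeroed and set $\mathcal{C}_i = \{ f \in (\mathbb{R}_+)^m : (S(x)f)_j = 0 \text{ for all } j \in J_i \}$; the claim is that the rows of $V_i$ positively span $\mathcal{C}_i$. The base case is immediate, since the rows of $V_0 = I_m$ positively span $\mathcal{C}_0 = (\mathbb{R}_+)^m$. For the inductive step, eliminating column $j_i$ partitions the current rows $r$ according to the sign of the scalar $c(r) := (S(x)r)_{j_i}$ recorded in $C_i$ into groups $P$ (positive), $N$ (negative) and $Z$ (zero); the rows of $V_{i+1}$ are exactly the rows in $Z$ together with the canceling combinations $g_{pn} = |c_n|\,r_p + c_p\,r_n$ for $p \in P$, $n \in N$. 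Given any $f \in \mathcal{C}_{i+1}$, I would first write it as a positive combination of the rows of $V_i$, then use the balance $\sum_{p\in P}\lambda_p c_p = \sum_{n\in N}\mu_n |c_n| =: M$ (forced by $(S(x)f)_{j_i}=0$) to recombine with the non-negative weights $\alpha_{pn} = \lambda_p \mu_n / M$ when $M>0$ (noting that $M=0$ forces all $P$- and $N$-coefficients to vanish, leaving only $Z$ rows). A short computation shows $\sum_{p,n}\alpha_{pn}\,g_{pn}$ reproduces the $P$- and $N$-contributions exactly, so $f$ is a positive combination of the rows of $V_{i+1}$. At termination $\mathcal{C}_m = \mathcal{N}(S(x)) \cap (\mathbb{R}_+)^{m}$, so the rows of $V_m$ positively span the whole cone.

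Finally, \emph{minimality} follows from Step~4: each row deleted there is a positive combination of the surviving rows, and removing a positively dependent generator never shrinks the positive span, so the surviving rows still span while being positively linearly independent --- precisely the definition of a positive basis.

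I expect the generation induction to be the main obstacle, chiefly because the deletions in Steps~3--4 are interleaved with the pairing, so one must check that the spanning invariant survives at \emph{every} intermediate stage rather than only at termination. The explicit recombination settles that every $f \in \mathcal{C}_{i+1}$ is a positive combination of the new rows, so no generator is lost; Farkas' Lemma (Lemma~\ref{lem:Farka}) then enters to certify the complementary extremality, namely that each surviving vector is a genuine edge of the cone: any spurious vector outside the positive hull of the retained rows would, by the alternative, produce a separating $y$ with $r^T y \ge 0$ on all retained rows yet $f^T y < 0$, contradicting the membership just established. This reconciles the algorithmic output with the uniqueness-up-to-scaling characterization of extreme pathways recalled from \cite{Schilling}.
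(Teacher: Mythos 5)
Your proof is correct, and its skeleton --- membership in the cone via the invariants $V_i\,S(x)^T=C_i$ and entrywise non-negativity, spanning by induction on the eliminated columns, minimality from Step 4 --- is the same three-statement structure as the paper's. The genuine difference is how the key spanning step is settled. The paper assembles the unknown recombination weights into the linear system \eqref{eq:Axb} and invokes Farkas' Lemma (Lemma \ref{lem:Farka}) to prove non-constructively that a non-negative solution exists (any $y$ with $A^Ty\ge 0$ forces $b^Ty\ge 0$, so the second alternative fails); it also takes a slightly delicate detour, transporting the spanning question from the rows of $V_1$ to the columns of $C_1^T$. You instead exhibit the weights in closed form: writing $f$ as a non-negative combination of the current rows with coefficients $\lambda_p$ ($p\in P$), $\mu_n$ ($n\in N$), $\nu_z$ ($z\in Z$), the balance $M:=\sum_p\lambda_p c_p=\sum_n\mu_n|c_n|$ is forced by $(S(x)f)_{j_i}=0$, and the choice $\alpha_{pn}=\lambda_p\mu_n/M$ reproduces the $P$- and $N$-contributions exactly, the degenerate case $M=0$ forcing all those coefficients to vanish since $c_p>0$ and $\lambda_p\ge 0$. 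This is the classical double-description argument: more elementary, constructive, and it works directly on the flux vectors rather than on their images under $S(x)^T$; what the paper's Farkas route buys is only that one need not guess the weights. Two caveats. First, your closing paragraph is circular as written: the separating $y$ that Farkas would produce for a ``spurious'' vector is refuted by the very spanning property you had just established, so that paragraph proves nothing new --- and nothing more is needed, since a positive basis requires only positive independence plus positive spanning (being an edge of the cone is an extra property, not part of the claim). Second, in the minimality step, positively dependent rows must be discarded one at a time (two identical rows are each a positive multiple of the other, yet deleting both shrinks the span); this sequential reading of Step 4 is the one both you and the paper implicitly rely on.
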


\begin{proof}
There are three statements which must be shown to prove this Proposition.

\vspace{.2 cm}

\noindent
\textit{Statement 1:}  The extreme pathway vectors ($v_i$) are positively independent
(systematically independent in terminology used in \cite{palsson}).

\noindent
\textit{Statement 2:} The span of the extreme pathways vectors is contained in $(\mathcal{N}(S(x)) \cap (\mathbb{R}_+)^{m})$ i.e. $\sum_{\lambda_i \geq 0} \lambda_iv_i \subset (\mathcal{N}(S(x)) \cap (\mathbb{R}_+)^{m})$.

\noindent
\textit{Statement 3:} The extreme pathways vectors span $(\mathcal{N}(S(x)) \cap (\mathbb{R}_+)^{m})$ i.e. $  (\mathcal{N}(S(x)) \cap (\mathbb{R}_+)^{m}) \subset \sum_{\lambda_i \geq 0} \lambda_iv_i$.

Statement 1 follows from Step 4 of the algorithm.

Let us now prove Statement 2.
Since the vectors corresponding to extreme pathways are positive combinations of rows of matrices with positive entries,
then $v_i \in (\mathbb{R}_+)^{m} \mbox{ for all } i$.
We have $V_n \cdot S(x)^T = C_n=0$, equivalently  $S(x) \cdot V_n^T = C_n^T=0$. This implies that the columns
of $V_n^T$  in the nullspace of $S(x)$, thus the rows of $V_n$ in the nullspace of $S(x)$.
Since both $\mathcal{N}(S(x)$ and $(\mathbb{R}_+)^{m}$ are closed for positive linear combinations we are done.

Finally it remains to prove statement 3, in order to show $(\mathcal{N}(S(x)) \cap (\mathbb{R}_+)^{m}) \subset \sum_{\lambda_i \geq 0} \lambda_iv_i$
we proceed as follows.
First we show that after one iteration of the algorithm
the rows of $V_1$ form a positive basis for vectors $w$ such that
the $j_1$-th element of $S(x) w$ equals zero.
After $n$ iterations, we show that $V_n$ is a positive basis for vectors $w\in \mathcal{N}(S(x))$.

For simplicity of notation, we assume $j_1=1$.
Define $W_1$ to be the set of vectors $w$ such that  the first entry of $S(x) w$ equals zero.
It is clear from the construction of $V_1$ that its rows belong to $W_1$. It remains to be shown that the rows of $V_1$ span $W_1$.
Since $V_1\cdot S(x)^T=C_1$, thus we can alternatively show that
the columns of $C_1^T$ span the subset of the range of $S(x)$ with first entry equal to zero.
In other words, we must show that if $\sum_{\lambda_i \geq 0} \lambda_ic_{i,1} = 0 $, where $c_{i,1}$ represents the first entry in the $c_i$-th row of $C_0$, that the sum
$\sum_{\lambda_i \geq 0} \lambda_ic_{i} = 0 $
can also be represented using rows from $C_1$.
We assume an ordering such that
$c_{i,1} \geq 0$
for $i\leq \ell$
and $c_{i,1}< 0$ for $i> \ell$,
each row in $C_1$ can be represented by
$c_i + \alpha_{i,j}c_j$
where $i\leq \ell, j>\ell$ and $\alpha_{i,j}= \frac{-c_{i,1}}{c_{j,1}} \geq 0$.
We need to find $\mu_{i,j}$ such that,
\begin{equation}\label{eq:proofsatisfied}
\sum \mu_{i,j} (c_i + \alpha_{i,j}c_j) = \sum_{i=1}^n \lambda_i c_i.
\end{equation}

We can split the sum on the right hand side by considering the positive and negative entries separately,
\begin{equation*}
\sum \mu_{i,j} (c_i + \alpha_{i,j}c_j) = \sum_{i = 1}^\ell \lambda_ic_{i}+\sum_{i = \ell+1}^n \lambda_ic_{i}.
\end{equation*}

For $\lambda_i$, $i \in \{\ell+1, \ldots, n\}$ we can write:
\begin{equation*}
\sum_{k=1}^\ell \mu_{k,i} \alpha_{k,i} = \lambda_i
\end{equation*}
which gives:
\begin{equation}\label{eq:negsum}
\sum_{k=1}^\ell \mu_{k,i} \ (-c_{k,1}) = c_{i,1} \lambda_i ,\: i \in \{\ell+1, \ldots, n\}.
\end{equation}

For $\lambda_i$, $i\in \{1, \ldots, \ell\}$ we can write:
 \begin{equation*}
 \sum_{k=\ell+1}^n \mu_{i,k} = \lambda_i
 \end{equation*}
which gives:
\begin{equation} \label{eq:possum}
 \sum_{k=\ell+1}^n \mu_{i,k} c_{i,1} = c_{i,1}\lambda_i, \:  i \in \{1, \ldots, \ell\}.
\end{equation}

The equations \eqref{eq:negsum}, \eqref{eq:possum} can be written in the following way,
\begin{equation}\label{eq:Axb}
\scalemath{.85}{
\begin{array}{c@{\!\!\!}l}
  \begin{array}[c]{c}
   \left. \begin{array}{c} \vphantom{0}  \\
   \vphantom{0} \\
   \text{$\ell$ rows}\\
   \vphantom{0} \end{array} \right\{
   \\ \left. \begin{array}{c}
   \text{$n-\ell$ rows}  \end{array} \right\{
\end{array}
&
  \left( \begin{array}[c]{cccc}
    [c_{1,1}] & 0 & \cdots & 0 \\
    0 & [c_{2,1}] & \cdots & 0 \\
    \vdots & \vdots&  &\vdots \\
    0 & 0&  \cdots&[c_{\ell,1}] \\
    -diag(c_{1,1}) & -diag(c_{2,1})& \cdots &-diag(c_{\ell,1}) \\

  \end{array}  \right)

\end{array}
\cdot \begin{pmatrix}
 \mu_{1,\ell+1} \\
 \mu_{1,\ell+2} \\
 \vdots \\
 \mu_{1,n} \\
 \mu_{2,\ell+1} \\
 \vdots\\
 \mu_{2,n}\\
 \vdots\\
 \mu_{\ell,n} \\
\end{pmatrix} =
\begin{pmatrix}
 c_{1,1}\lambda_{1} \\
 c_{2,1}\lambda_{2} \\
 \vdots \\
 c_{n,1} \lambda_{n} \\
\end{pmatrix}
}
\end{equation}
where [$c_{i,1}$] represents a $1 \times (n-\ell)$ vector with all entries equal to $c_{i,1}$ and $diag(c_{i,1}) \in M_{(n-\ell) \times (n-\ell)}$ a diagonal matrix with $c_{i,1}$ on the diagonal.
Let $A$, $b$ be the matrix on the left-hand side and
the vector on the right-hand of equation \eqref{eq:Axb}, respectively.
We can then apply Farkas' Lemma. Consider a vector $y\in\R^n$ such that $A^T \cdot y\geq  0$,
then for the first row we have $c_{1,1}y_1 - c_{1,1}y_{\ell+1} \geq 0$. Further for any $i=1,\ldots, \ell$
and
$j=\ell+1,\ldots,n$ we have
$c_{i,1}y_i - c_{i,1}y_{j} \geq 0$
which implies
$y_i\geq y_j$.
Setting $\bar{y} = max(y_{\ell+1} ,\ldots, y_n)$ we have $y_i\geq \bar{y}$ for $i=1,\ldots, \ell$.
Since all $c_{j,1} <0$ for $j=\ell+1,\ldots,n$ and all $\lambda_j \geq 0$ we have that $y_jc_{j,1}\lambda_j \geq \bar{y}c_{j,1}\lambda_j$. Next we define $\bar{\mathbf{y}}=(\bar{y},\ldots, \bar{y})\in \R^n$ and note that $b^T y\geq b^T \bar{\mathbf{y}}$. Since $\sum_{\lambda_i \geq 0} \lambda_ic_{i,1} = 0 $ we have $b^T \bar{\mathbf{y}} = 0$.
Therefore
condition 2 of Farkas' Lemma fails.
This implies that condition 1 is true and thus there exists a set of positive $\mu$'s that solve the system \eqref{eq:Axb}.
Since a set of $\mu$'s can be found we have that after the first iteration of the algorithm the rows of $V_1$ span $W_1$. By similar argument the rows of the $V_i$ in subsequent iterations span the set of vectors $w$ such that the first $i$ entries of $S(x)w$ equal zero. After $n$ iterations, the rows of $V_n$ span $\mathcal{N}(S(x)) \cap (\mathbb{R}_+)^{m}$, thus we are done.
\end{proof}

\begin{example}{Comparison of the standard and positive bases}\label{ex:compare}

Using the RCT network, an example is presented which compares a standard basis of the nullspace and positive basis for the nullspace. Here we provide the calculations needed to show \textit{Statement 2} and \textit{Statement 3} of Proposition \ref{prop:Positive_Basis} for the RCT. The full details for this example are contained in the Appendix.

\end{example}

\section{Equilibria and asymptotic behavior of metabolites for fixed fluxes} \label{sec:fix-f}

In this section we characterize equilibria and the stability of LIFE systems with a fixed flux vector. We start with linear systems,
and then we consider special LIFE systems, with Assumptions (B) and (C).
LIFE systems satisfying at least Assumption (A) are known as \emph{compartmental systems} in the automatic control community \cite{compartmental,Bullo}. We build upon the well-established results on linear compartmental systems to get a full understanding of linear LIFE systems, as well as for special LIFE systems satisfying Assumption (C). %TBD IMPROVE THIS OVERVIEW.
Notice that weakly connected components of $\tilde G$ correspond to subsystems having no interaction with each other, so that we can study each weakly connected component separately from the others. Hence, throughout this section, we assume $\tilde G$ is weakly connected, without loss of generality.

\subsection{Linear systems} \label{sec:linear}
%%%%%%%%%%%%%%%%%%%%%% Linear systems
In this section we study the properties of linear systems,
i.e. special LIFE system \eqref{met-dyn-LIFE} satisfying Assumption~(C) with fluxes $H_v(x_v)=x_v$, or, equivalently, a system \eqref{eq:dyn-J}, with $h(x) =x$.
We first focus on metabolic networks with no intakes nor excretions,
recalling results from a rich literature from different communities.
For the case with intakes and excretions, we make use of results for Compartmental Systems \cite{Bullo}.

\subsubsection{Linear system without intakes and excretions} \label{subsec:lin-nointake-noexcretion}
In the case with no intakes nor excretions (also known as free closed system),
the linear LIFE dynamics become $\dot x = J(f) x$.
Notice that $J(f)$ is a \emph{Metzler matrix} (i.e. has non-negative off-diagonal entries), and all its columns sum to zero ($\mathbf 1^T J(f)  = \mathbf 0^T$).

This system is known as \emph{Laplacian} dynamics, because $L = -J(f)^T$ is a weighted Laplacian of the graph $G$ defined as follows.
Given the graph $G$ and given strictly positive weights $f_e$ associated with its edges
(for us, the weights are the fluxes),
the \emph{weighted adjacency matrix} $A$ is defined by $A_{ij} = f_{(v_i,v_j)}$ if $(v_i,v_j)\in E$, and $A_{ij} = 0$ otherwise.
The corresponding weighted Laplacian is $L = D-A$, where $D = \diag(A \mathbf 1)$ is the diagonal matrix containing the weighted out-degrees, i.e. row-sums of $A$.

The eigenvalues of the Laplacian, and in particular its eigenvalue $0$ and the corresponding eigenspace which is actually the set of equilibria of $\dot x = J(f) x$, have been studied in graph theory (see e.g.~\cite{laplacian-2006}).
The study of the spectrum of the Laplacian has received extensive attention also in the automatic control community,
on one hand because of the recent interest in the so-called consensus dynamics (see \cite[Chapter 7]{Bullo}) $\dot x = - L x$ (which are different from the dynamics $\dot x = J(f) x$, since $L=-J(f)^T$, but the eigenvalues of $-L$ and $J(f)$ are the same),
and on the other hand because of the interest in the linear compartmental system $\dot x = J(f) x$ (see \cite[Chapter 9]{Bullo}).

Laplacian dynamics have been introduced in the mathematical biology literature by \cite{Laplacian}, together with a complete study of their equilibria and convergence properties.
Notice that in \cite{Laplacian} the matrix $J(f)$ itself is called a Laplacian, while in the graph theory and control theory communities the name Laplacian refers to $L=-J(f)^T$.

%%%%%%%%%%%%%%%%%%%% Markov chains
The dynamics $\dot x = J(f) x$ is also very related to \emph{continuous-time Markov chains} (see e.g.\ the textbook \cite{MarkovChains}). Thus we elaborate on this, a homogeneous continuous-time Markov chain over a finite state $V = \{ 1, \dots, n\}$ is a stochastic process $X(t)$ taking values in $V$,
that satisfies the Markov property:\\
\noindent
for all times $t_0 \le t_1 \le \dots \le t_h \le t_{h+1}$,
\[
\Pr(X_{t_{h+1}}=i_{h+1}
	\mid X_{t_{0}}=i_{0},X_{t_{1}}=i_{1},\ldots ,X_{t_{h}}=i_{h})=
\Pr(X_{t_{h+1}}=i_{h+1} \mid X_{t_{h}}=i_{h}), \]
and which is homogeneous in time:
{\small\[\Pr(X_{t_{h+1}}=i_{h+1} \mid X_{t_{h}}=i_{h}) =
\Pr(X_{t_{h+1}-t_h}=i_{h+1} \mid X_{t_{0}}=i_{h}) =
P_{i_{h}, i_{h+1}}(t_{h+1}-t_h)\,.
\]}$P(t)$ is the matrix whose $(i,j)$-th entry represents the probability of transition from state $i$ to state $j$ in an interval of time of length $t$.
$P(t)$ is assumed to be right-differentiable, and with time derivative defined by
$\dot{P}(t) = \lim_{h \to 0^+} (P(t+h) - P(t))/h$.
The Markov chain is fully described by its \emph{generator matrix} (or \emph{transition rate matrix}) defined by $Q = \dot{P}(0)$.
Indeed, $P(t)$ is related to $Q$ by the \emph{Kolmogorov forward equation} $\dot{P}(t) = P(t) Q$.
The unique solution of the Cauchy problem $\dot{P}(t) = P(t) Q$ with $P(0)=I$ is
$P(t) = e^{Qt}$.
Denoting by $\pi(t)$ a column vector whose $i$th entry is $\pi_i(t) = \Pr(X(t)=i)$,
we have that $\pi^T(t) = \pi^T(0) P(t) = \pi^T(0) e^{Qt}$ and hence is a solution of
$\dot \pi^T = \pi^T Q$ with given initial condition $\pi(0)$.

The generator matrix $Q$ is a Metzler matrix whose rows sum to $0$ ($Q \mathbf 1 = \mathbf 0$),
and hence there is an immediate equivalence between the dynamics
$\dot \pi^T = \pi^T Q$ and
the linear LIFE system $\dot x = J(f) x$, simply by taking $Q = J(f)^T$.
In Markov chains, one looks at $\pi$ being a probability vector, namely having positive entries and $\sum_i \pi_i = 1$. In LIFE systems, we are also interested in a vector $x$ with positive entries, but the total mass could be arbitrary, so that one should set $\pi_i(0) = x_{v_i}(0)/m_0$, with $m_0 = \sum_i x_{v_i}(0)$.

Based on all this rich literature, we recall here the results about equilibria and their stability.
The spectrum of $J(f)$ is characterized as follows (see e.g.\ \cite[Proposition.~$1$]{Laplacian}):
\begin{proposition} \label{prop:Laplacian}
Assume there are no intakes nor excretions,
then:
\begin{itemize}
\item All eigenvalues of $J(f)$ are either $0$ or have strictly negative real part.
\item The dimension of the nullspace of $J(f)$ is equal to the algebraic multiplicity of the $0$ eigenvalue
\footnote{The algebraic multiplicity of an eigenvalue is its multiplicity as a root of the characteristic polynomial, while its geometric multiplicity is the dimension of the corresponding eigenspace. In the case of the zero eigenvalue, the eigenspace is the nullspace of the matrix.},
and is equal to the number of terminal components in $G$.
\item Moreover, denoting by $G_1, \dots, G_k$ the terminal components of $G$, there exists a basis of the nullspace of $J(f)$ composed of vectors $\pi_1, \dots, \pi_k$, such that vector $\pi_i$ has strictly positive entries in correspondence of vertices in $G_i$, and has vanishing entries otherwise.
\end{itemize}
\end{proposition}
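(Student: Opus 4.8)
The plan is to reduce the spectral analysis of $J(f)$ to that of the strongly connected components of $G$, by putting $J(f)$ into block-triangular form via the condensation of the graph. Two elementary observations drive everything: as already noted before the statement, $J(f)$ is a Metzler matrix (non-negative off-diagonal entries) with all column sums equal to zero, so $\mathbf 1^T J(f) = \mathbf 0^T$ and hence $0$ is automatically an eigenvalue. First I would order the vertices so that the strongly connected components $C_1, \dots, C_p$ respect a topological order of the condensation DAG; since $J(f)_{ij}$ is nonzero only when there is an edge $v_j \to v_i$, this ordering makes $J(f)$ block-triangular, so that $\det(J(f) - \lambda I) = \prod_{a} \det(J_{aa} - \lambda I)$ and the spectrum of $J(f)$ is the union of the spectra of the diagonal blocks $J_{aa}$.

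Next I would classify each diagonal block $J_{aa}$, which is irreducible because $C_a$ is strongly connected. If $C_a$ is a terminal component, no flux leaves it, so every outgoing edge of a vertex $v \in C_a$ stays inside $C_a$; this forces the column sums of $J_{aa}$ to vanish, so $-J_{aa}^T$ is an irreducible weighted Laplacian. By Perron--Frobenius theory for irreducible Laplacians, $J_{aa}$ then has a \emph{simple} zero eigenvalue with a strictly positive right null vector, and all remaining eigenvalues in the open left half-plane. If instead $C_a$ is not terminal, some vertex of $C_a$ has an edge leaving the component, so at least one column sum of $J_{aa}$ is strictly negative while the others are non-positive; thus $-J_{aa}$ is an irreducible, diagonally dominant matrix with non-positive off-diagonal entries and a strictly dominant column, i.e.\ a nonsingular M-matrix, whence every eigenvalue of $J_{aa}$ has strictly negative real part. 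Combining the two cases proves the first bullet (every eigenvalue is $0$ or has strictly negative real part) and shows that the algebraic multiplicity of $0$ equals the number $k$ of terminal components.

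It then remains to produce an explicit basis of the nullspace and to upgrade the algebraic count to the geometric one. For each terminal component $G_i$ I would take the strictly positive Perron null vector of the block $J_{ii}$ and extend it by zeros to all of $V$, calling the result $\pi_i$. Using terminality (no edges leave $G_i$) one checks directly that $J(f)\pi_i = 0$: for $v \notin G_i$ the only nonzero entries of $\pi_i$ sit on $G_i$, from which no edge reaches $v$, while for $v \in G_i$ the computation reduces to $J_{ii}$ applied to its Perron vector. The $\pi_i$ have pairwise disjoint supports, hence are linearly independent, so the geometric multiplicity of $0$ is at least $k$. Since the geometric multiplicity never exceeds the algebraic one, which we already showed equals $k$, all three counts coincide: the nullspace has dimension $k$, equals the algebraic multiplicity (so $0$ is semisimple), and $\{\pi_1, \dots, \pi_k\}$ is the desired basis, establishing the second and third bullets at once.

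The main obstacle is the careful treatment of the diagonal blocks and the attendant Perron--Frobenius / M-matrix input: one must verify that a terminal block really is an irreducible Laplacian with a \emph{simple} zero eigenvalue (not merely a one-dimensional kernel), and that a non-terminal block is genuinely nonsingular. The semisimplicity of the zero eigenvalue of the full matrix $J(f)$ is delicate, because block-triangularity alone does not control the global Jordan structure; I sidestep this by the support/dimension-counting argument above, which squeezes the geometric multiplicity between $k$ and the algebraic multiplicity rather than analyzing Jordan chains of $J(f)$ directly.
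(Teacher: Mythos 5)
Your proof is correct, but it cannot be ``the same as the paper's'' for a simple reason: the paper does not prove Proposition~\ref{prop:Laplacian} at all. It recalls the statement from the literature, citing Proposition~1 of \cite{Laplacian} and the surrounding material on Markov chains and linear compartmental systems in \cite{MarkovChains} and \cite[Chapters~7 and~9]{Bullo}. Your argument is therefore a self-contained replacement, and it follows the classical spectral route of the Markov-chain/compartmental-systems literature: order the strongly connected components along a topological order of the condensation so that $J(f)$ is block lower-triangular; apply Perron--Frobenius to each terminal block (irreducible Metzler with zero column sums, hence a simple zero eigenvalue with a strictly positive null vector and all other eigenvalues in the open left half-plane); apply irreducible diagonal dominance/M-matrix theory to each non-terminal block (Hurwitz, since some column sum is strictly negative); and then obtain semisimplicity of the zero eigenvalue of the full matrix by your squeeze argument, with geometric multiplicity at least $k$ from the disjointly supported vectors $\pi_i$ and at most the algebraic multiplicity, which the block computation pins to $k$. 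By contrast, the source the paper cites, \cite{Laplacian}, reaches the kernel description with a more combinatorial flavor, constructing the positive null vectors on terminal components explicitly (via Matrix-Tree-type spanning-tree formulas); that route buys explicit expressions for the equilibria, while yours buys the eigenvalue location in the first bullet and a clean proof that the zero eigenvalue is semisimple without analyzing Jordan chains. Two micro-remarks, neither a gap: a singleton terminal component produces the $1\times 1$ zero block, for which your Perron--Frobenius step should be read as a triviality; and your diagonal dominance for non-terminal blocks is column-wise rather than row-wise, so one should either pass to the transpose (which has the same spectrum) or invoke the column version of the irreducible-dominance nonsingularity theorem.
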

It is customary to `normalize' the vectors $\pi_1, \dots, \pi_k$ so that their entries sum to 1, so that they can be interpreted as probability vectors. Under this choice, the restriction of vector $\pi_i$ to the terminal component $G_i$ is the stationary distribution of the Markov chain restricted to such component, i.e. its unique equilibrium with mass $1$ (uniqueness is obtained from the fact that the terminal component is a strongly connected component).

Notice that the dimension of the nullspace of an incidence matrix $\Gamma$ (and hence of a stoichiometrix matrix $S(x)$) is related to the number of weakly connected components, while the dimension of the nullspace of a Laplacian matrix is related to strongly connected components, and not all components matter, but only the terminal ones.

In the case where $G$ has a unique terminal component, the nullspace of $J(f)$ has dimension one: the equilibrium is unique, up to a multiplicative factor which is the initial total mass $m_0 = \sum_i x_{v_i}(0)$.
In the case where $G$ is strongly connected, the whole graph is a single strongly connected component. In this case, not only the nullspace of $J(f)$ has dimension one, but also it is generated by a vector whose entries are all strictly positive.

The spectral properties of $J(f)$ given in Proposition~\ref{prop:Laplacian} fully characterize the asymptotic behavior of the linear system $\dot x = J(f) x$, by standard theory of linear systems, giving the following:
\begin{proposition} \label{prop:asymp-linear-closed}
Consider the linear LIFE system with no intakes nor excretions and denote by $G_1, \dots, G_k$ the terminal components of the associated graph $G$. The following properties hold:
\begin{itemize}
\item The total mass of the system $m = \sum_{v\in V} x_v = m_0$ is constant in time.
\item From any positive initial condition $x(0)$, the system converges to an equilibrium, having strictly positive entries in correspondence of vertices of terminal components, and $0$ elsewhere.
\item Moreover, if there is a unique terminal component, then the equilibrium is uniquely determined by the initial total mass $m_0$.
\end{itemize}
\end{proposition}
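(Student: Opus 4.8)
The plan is to derive all three conclusions from the spectral description of $J(f)$ given in Proposition~\ref{prop:Laplacian} together with elementary positivity arguments, so that no further computation with the matrix itself is needed.

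First I would dispatch mass conservation, which is immediate: since $\mathbf 1^T J(f) = \mathbf 0^T$, differentiating the total mass gives $\frac{\mathrm d}{\mathrm dt}\,\mathbf 1^T x = \mathbf 1^T J(f) x = 0$, so $m(t) = \mathbf 1^T x(t) = m_0$ for all $t$. For convergence, the crucial input is the second bullet of Proposition~\ref{prop:Laplacian}: the zero eigenvalue is \emph{semisimple} (its geometric multiplicity, the dimension of the nullspace, equals its algebraic multiplicity), and every remaining eigenvalue has strictly negative real part. Hence there is no nontrivial Jordan block attached to $0$, and $\R^n$ splits as the direct sum of $\mathcal N(J(f))$ and the $J(f)$-invariant subspace $\mathcal R$ spanned by the generalized eigenvectors of the nonzero eigenvalues. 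Writing $x(0) = \bar x + w$ with $\bar x \in \mathcal N(J(f))$ and $w \in \mathcal R$, the solution is $x(t) = e^{J(f)t}x(0) = \bar x + e^{J(f)t} w$; since every eigenvalue active on $\mathcal R$ has negative real part, $e^{J(f)t}w \to 0$, and therefore $x(t) \to \bar x$, which is an equilibrium.

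It then remains to identify $\bar x$. Using the basis of the nullspace from Proposition~\ref{prop:Laplacian}, I would write $\bar x = \sum_{i=1}^k c_i \pi_i$, where $\pi_i$ is strictly positive on the terminal component $G_i$ and vanishes elsewhere. For any vertex outside all terminal components every $\pi_i$ vanishes, so $\bar x$ is $0$ there, which gives the second half of the middle bullet. The part I expect to require the most care is showing that each $c_i$ is strictly positive, so that $\bar x$ is strictly positive on each $G_i$. For this I would track the mass contained in a single terminal component, $m_i(t) = \sum_{v \in V(G_i)} x_v(t)$: because $G_i$ is terminal it has no outgoing edges, so from the mass-balance form \eqref{eq:mass-cons-Kirch} one gets $\dot m_i = \sum_{e \text{ into } G_i} f_e\, x_{\mathrm{tail}(e)} \ge 0$, using that solutions stay nonnegative (Proposition~\ref{prop:positivity}). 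Thus $m_i$ is nondecreasing, so $m_i(\infty) \ge m_i(0) > 0$ for a strictly positive initial datum; since $m_i(\infty) = c_i \sum_{v \in V(G_i)} (\pi_i)_v$ with a strictly positive sum, we conclude $c_i > 0$.

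Finally, for the uniqueness claim when $k=1$, the nullspace is one-dimensional and $\bar x = c_1 \pi_1$. Mass conservation (already established at every time, hence in the limit) forces $\mathbf 1^T \bar x = m_0$, so that $c_1 = m_0 / (\mathbf 1^T \pi_1)$ is determined by $m_0$ alone, and the equilibrium $\bar x = \frac{m_0}{\mathbf 1^T \pi_1}\,\pi_1$ depends only on the initial total mass. In summary, the convergence itself is routine once semisimplicity of the zero eigenvalue is in hand; the genuine work lies in the monotonicity-of-terminal-mass argument that pins down the strict positivity of the limiting distribution on each terminal component.
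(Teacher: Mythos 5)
Your proposal is correct and follows exactly the route the paper takes: the paper gives no written proof, stating only that the result follows from the spectral description in Proposition~\ref{prop:Laplacian} ``by standard theory of linear systems,'' which is precisely the semisimplicity-plus-Hurwitz decomposition you carry out. Your monotone terminal-mass argument ($\dot m_i \ge 0$ because a terminal component has no outgoing edges) is a clean and correct way to supply the one detail that is not purely spectral --- strict positivity of the limit on each terminal component, under the natural reading that the initial condition is strictly positive --- and is exactly the kind of step the paper leaves implicit.
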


\subsubsection{Linear system with intakes and excretions}
We now focus on linear systems with intakes and/or excretions,
using results from linear compartmental systems as summarized in \cite[Chapter 9]{Bullo}.
A first remark is that, having introduced a single $v_0$ from which all intake edges are originated, and a single $v_{n+1}$ to which all excretion edges are headed, we have a single weakly connected component containing intakes and/or excretions.
We restrict our attention to such a weakly connected component, while other possible components with no intakes nor excretions have a behavior described in Sect.~\ref{subsec:lin-nointake-noexcretion}.

The dynamics are given by $\dot x = J(f) x + \phi$, where vector $\phi$ represents the intakes. Due to excretions $J(f)$ does not have all column-sums equal to $0$.
This means that $-J(f)^T$ is not any more a Laplacian, but it is a \emph{grounded Laplacian}.
The term grounded Laplacian refers to a matrix obtained from a larger Laplacian matrix by deleting the row and column corresponding to a given vertex; the name `grounded' has an interpretation for electrical networks, where this corresponds to connecting the given vertex to the ground. Consider the subgraph of $\tilde G$ where we have removed $v_0$ but not $v_{n+1}$, and consider the weighted Laplacian $L \in M_{(n+1) \times (n+1)}$ of such a graph (with weights equal to the fluxes), then define $L_g$ by deleting the last row and last column (associated with $v_{n+1}$). The resulting grounded Laplacian is $L_g$ is such that $J(f) = -L_g^T$.

The spectral properties of $J(f)$ are summarized as follows (see \cite[Theorem $9.5$ and Lemma $9.12$]{Bullo}):
\begin{proposition}\label{prop:invert-grounded-Lap}
Consider a linear system with intakes and/or excretions, then:
\begin{itemize}
\item All eigenvalues of $J(f) $ are either $0$ or have strictly negative real part.
\item The dimension of the nullspace of $J(f)$ is equal to the algebraic multiplicity of the $0$ eigenvalue, and is equal to the number of terminal components not containing any excretion.
\end{itemize}
In particular, the following are equivalent:
\begin{enumerate}[label=(\alph*)]
\item For every $v \in V$ there is a path from $v$ to $X$.
\item $J(f)$ is Hurwitz stable (i.e. all its eigenvalues have strictly negative real part).
\item $J(f)$ is invertible.
\end{enumerate}
Moreover, when $J(f)$ is invertible, all entries of $-J(f)^{-1}$ are positive; if $G$ is strongly connected, they are strictly positive.
\end{proposition}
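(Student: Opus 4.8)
The plan is to read off all the claims from the structure of $J(f)$ as a Metzler matrix whose column sums are non-positive, and strictly negative exactly at excretion vertices; equivalently, $-J(f)^T$ is the grounded Laplacian $L_g$ described above. First I would establish the spectral localization. Applying Gershgorin's theorem to $J(f)^T$ (which has the same spectrum as $J(f)$), the disc associated with vertex $v_i$ is centered at $J_{ii}(f) = -\sum_{w:(v_i,w)\in\tilde E} f_{(v_i,w)} < 0$ with radius $\sum_{(v_i,v_j)\in E} f_{(v_i,v_j)}$; since the radius equals $|J_{ii}(f)|$ minus the excretion flux out of $v_i$, every disc lies in the closed left half-plane and meets the imaginary axis only at the origin. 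Because $J(f)$ is Metzler, Perron--Frobenius theory guarantees that the eigenvalue of largest real part is real, so all eigenvalues are either $0$ or have strictly negative real part, proving the first bullet.

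For the nullspace count I would pass to the condensation of $G$: ordering the vertices by a topological order of the strongly connected components puts $J(f)$ in block lower-triangular form, with diagonal blocks the subsystems associated to each component. A terminal component with no excretion yields a diagonal block that is an irreducible Metzler matrix with zero column sums, hence (by Proposition~\ref{prop:Laplacian} applied to that block, or directly by Perron--Frobenius) has a simple zero eigenvalue with a strictly positive null vector; every other component, being either non-terminal (mass flows out to a downstream component) or terminal-with-excretion (mass leaks to $v_{n+1}$), has a diagonal block that is Hurwitz. The algebraic multiplicity of the zero eigenvalue is therefore the number of excretion-free terminal components, and since each contributing block already carries a simple zero eigenvalue while the remaining blocks are Hurwitz, the triangular coupling cannot build a Jordan chain at $0$; thus the algebraic and geometric multiplicities coincide. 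This yields the second bullet. I expect this semisimplicity step---ruling out Jordan blocks at the zero eigenvalue across the triangular coupling---to be the main obstacle, and the point where the compartmental-systems results of \cite{Bullo} do the real work.

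The equivalences then follow quickly. Invertibility of $J(f)$ is the statement that $0$ is not an eigenvalue, which by the first bullet is exactly Hurwitz stability, giving (b)$\Leftrightarrow$(c). For (a)$\Leftrightarrow$(c) I would use the second bullet: $J(f)$ is invertible iff there are no excretion-free terminal components. If some vertex $v$ has no path to $X$, then following outgoing edges from $v$ one reaches a sink component from which $X$ is still unreachable, i.e.\ an excretion-free terminal component; conversely the vertices of an excretion-free terminal component have no edge leaving it and no excretion edge, so none of them reaches $X$. Hence invertibility is equivalent to every vertex having a path to $X$.

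Finally, for the sign of the inverse I would use that when $J(f)$ is Hurwitz the integral representation $-J(f)^{-1} = \int_0^\infty e^{J(f)t}\,dt$ converges. Since $J(f)$ is Metzler, $e^{J(f)t}$ has non-negative entries for every $t\ge 0$, so $-J(f)^{-1}$ is entrywise non-negative; when $G$ is strongly connected $J(f)$ is irreducible, $e^{J(f)t}$ is entrywise strictly positive for $t>0$, and the integral is strictly positive. Equivalently, one may observe that $-J(f)$ is a nonsingular M-matrix and invoke the standard non-negativity (strict positivity in the irreducible case) of the inverse of an M-matrix.
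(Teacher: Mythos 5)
Your proposal is correct, and it differs from the paper in an essential way: the paper does not prove Proposition~\ref{prop:invert-grounded-Lap} at all --- it imports the statement wholesale from the compartmental-systems literature (\cite{Bullo}, Theorem~9.5 and Lemma~9.12). What you have written is a self-contained substitute: Gershgorin applied to the columns of $J(f)$ gives the spectral localization (your additional Perron--Frobenius appeal there is redundant, since a disc centered at $-c$ with radius $r\le c$ can meet the imaginary axis only at the origin, which already forces every non-negative-real-part eigenvalue to be $0$); the condensation of $G$ into strongly connected components gives the block-triangular reduction and the count of zero eigenvalues; and the representation $-J(f)^{-1}=\int_0^\infty e^{J(f)t}\,\mathrm dt$, valid once $J(f)$ is Hurwitz, gives the sign of the inverse (the M-matrix alternative you mention is equally standard). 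Your route buys a proof readable with only Perron--Frobenius theory and elementary linear algebra, where the paper's route buys brevity at the cost of a black box.

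One step should be tightened: the semisimplicity claim you flag as ``the main obstacle.'' As literally stated --- each zero block is simple and the remaining blocks are Hurwitz, hence the triangular coupling cannot create a Jordan chain --- the principle is false in general: $\begin{pmatrix}0&0\\1&0\end{pmatrix}$ is block lower triangular with two $1\times 1$ diagonal blocks, each carrying a simple, semisimple zero eigenvalue, yet it is a Jordan block. What saves you is structure you have already established but do not invoke: the excretion-free terminal components are sinks of the condensation, so they receive coupling but emit none, and there are no edges between two of them. Ordering those components last, $J(f)=\begin{pmatrix}A&0\\ C&D\end{pmatrix}$, where $A$ (all other components) is block triangular with Hurwitz diagonal blocks, hence Hurwitz and invertible, and $D=\diag(D_1,\dots,D_k)$ is block \emph{diagonal}, each $D_i$ an irreducible Metzler block with zero column sums and simple zero eigenvalue (Proposition~\ref{prop:Laplacian} applied to that component). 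Now if $J(f)\begin{pmatrix}x\\ y\end{pmatrix}=0$, then $Ax=0$ forces $x=0$, and then $Dy=0$; hence the geometric multiplicity of $0$ equals $\dim\ker D=k$, which by block-triangularity is also the algebraic multiplicity. This two-line computation closes the only gap, with no appeal to \cite{Bullo} needed; the equivalences (a)--(c) and the positivity of $-J(f)^{-1}$ then go through exactly as you wrote them.
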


Equilibria of the dynamics $\dot x = J(f)x + \phi$ are the solutions of the linear system of equations $J(f) x = -\phi$.
By Proposition~\ref{prop:invert-grounded-Lap}, if all vertices $v$ have a path to some excretions, then there is a unique equilibrium $\bar x = - J(f)^{-1} \phi$, and moreover all entries of $\bar x$ are positive.
To study the general case, where vertices might or might not have a path to some excretions, it is convenient to partition the system into two subsystems, as follows.
Partition the vertex set as  $V = V_{1} \cup V_2$, with $V_1$ the set of $v \in V$ such that there is a path from $v $ to $v_{n+1}$.
Without loss of generality, we can re-label vertices in $V$ so as to have vertices $v_1, \dots, v_r \in V_1$ and $v_{r+1}, \dots, v_n \in V_2$. According to this decomposition, partition the vector $x$ into two blocks $x_1$ corresponding to $V_1$ and $x_2$ corresponding to $V_2$, and similarly partition $\phi$ as $\phi_1$, $\phi_2$.
Notice that there is no edge from $V_2$ to $V_1$,
since an edge $(w,v)$ with $v \in V_1$ implies that there is a path from $v$ to $v_{n+1}$ and also a path from $w$ to $v_{n+1}$.
Hence, we have:
\[ J(f) = \left[ \begin{matrix}
J_{1} & 0 \\
J_{21} & J_2
\end{matrix} \right] ,\]
and
\begin{equation} \label{eq:reduced}
\begin{cases}
\dot x_1 = J_1 x_1 + \phi_1 \\
\dot x_2 =  J_2 x_2  + J_{21} x_1 + \phi_2.
\end{cases}
\end{equation}
The first subsystem is called the \emph{reduced system}, and its evolution is not affected by the second subsystem.
The matrix $J_1$ is equal to the matrix $J(f)$ of the graph $\tilde H$ obtained from $\tilde G$ as follows:
for any edge $(v,w)$ with $v \in V_1$, $w \in V_2$, remove the edge $(v,w)$ and add the edge $(v, v_{n+1})$ with flow $f_{v, v_{n+1}} = f_{v,w}$;
then remove all vertices in $V_2$ and all corresponding edges.
By definition of $V_1$, all vertices of $\tilde H$ have a path to $v_{n+1}$, and hence, by Proposition \ref{prop:invert-grounded-Lap}, $J_1$ is invertible and Hurwitz stable, and $-J_1^{-1}$ has positive entries.

For the second subsystem, it is easy to see that $J_2$ is the matrix $J(f)$ of the subgraph $K$ of $G$ corresponding to vertices in $V_2$.
Hence, $L = -J_2^T$ is a Laplacian matrix, and by Proposition \ref{prop:Laplacian} its nullspace is generated by vectors $\pi_1, \dots, \pi_k$,
where $G_1, \dots, G_k$ are the terminal components in $K$ (same as the terminal components with no excretions  in $G$),
and each vector $\pi_i$ has strictly positive entries corresponding to vertices in $G_i$ and is $0$ elsewhere.
Choosing each $\pi_i$ so that its entries sum to 1, the restriction of $\pi_i$ to $G_i$ is the stationary distribution of the corresponding Markov chain restricted to $G_i$, namely the Markov chain whose generator matrix is the transpose of the submatrix of $J(f)$ corresponding to $G_i$.

These remarks, together with standard tools of analysis of linear dynamical systems, lead to the following proposition:
\begin{proposition}[\cite{Bullo}, Theorem~$9.13$] \label{prop:asymptotics-linear}
Consider a weakly connected linear LIFE system with positive flows on all edges of $\tilde G$.
From any positive initial condition, the reduced system (namely the subsystem connected to $X$) converges to its unique equilibrium with positive entries $\bar{x}_1 = -J_1^{-1} \phi_1$.

If there are some vertices not connected to $X$, then:
\begin{itemize}
\item If there is a terminal component $G_T$ with no excretions such that there is a path from $I$ to $G_T$, then the mass in $G_T$ grows unbounded,
and hence also $\lim_{t \to \infty}  \| x_2(t) \|= + \infty$ and $\lim_{t \to \infty}  \|  x(t) \|= + \infty$.
\item If for all terminal components with no excretions $G_1, \dots, G_k$ there is no path from $I$ to $G_i$, then the mass of the system remains bounded, and moreover $\lim_{t \to \infty} x_2(t)$ is some equilibrium point $\bar x_2$ (depending on the initial condition), such that all entries of $\bar x_2$ corresponding to non-terminal components are $0$, while the restriction of $\bar x_2$ to a terminal component $G_i$ is proportional to the stationary distribution of the Markov chain with generator matrix $Q$ equal to the transpose of the submatrix of $J(f)$ corresponding to $G_i$.
\end{itemize}

\begin{figure}[h]
\begin{center}
\includegraphics[scale = .20]{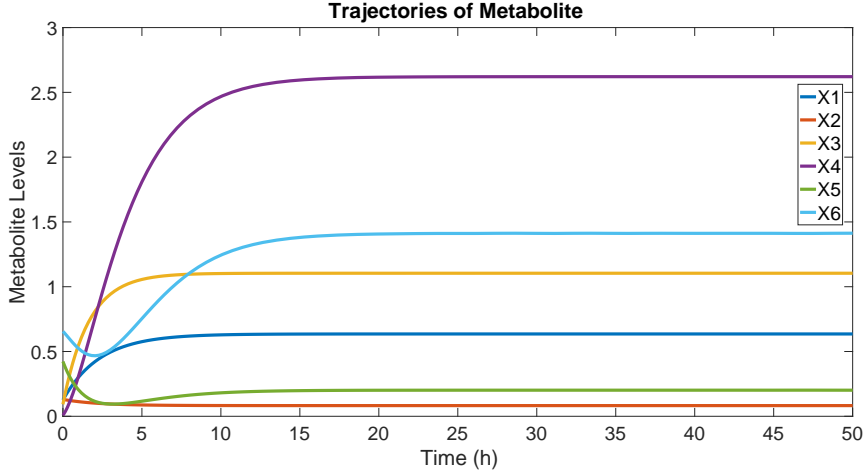}\quad
\caption{The trajectories of the values of metabolites over 25 hours.}
\label{fig:sim_eq}
\end{center}
\end{figure}

\begin{example}\label{ex:sim_eq}
In this example a simulation is used to verify the calculated equilibrium found using Proposition \ref{prop:asymptotics-linear}. Here we use the RCT network shown in \ref{fig:RCT Network} and calculate the equilibrium. Because every vertex of the RCT is connected to $X$ we have that $\dot{x} = J(f)x + \phi$ with,
{\footnotesize\begin{align}
J(f)&=
\left(\begin{array}{cccccc} -f_{\mathrm{(v_1,v_4)}} & 0 & 0 & 0 & 0 & 0\\ 0 & -f_{\mathrm{(v_2v_4)}} & 0 & 0 & 0 & 0\\ 0 & 0 & -f_{\mathrm{(v_3v_4)}} & 0 & 0 & 0\\ f_{\mathrm{(v_1v_4)}} & f_{\mathrm{(v_2v_4)}} & f_{\mathrm{(v_3v_4)}} & -f_{\mathrm{(v_4v_5)}}-f_{\mathrm{(v_4v_6)}} & 0 & 0\\ 0 & 0 & 0 & f_{\mathrm{(v_4v_5)}} & -f_{\mathrm{(v_5v_6)}} & 0\\ 0 & 0 & 0 & f_{\mathrm{(v_4v_6)}} & f_{\mathrm{(v_5v_6)}} & -f_{\mathrm{(v_6v_{n+1})}} \end{array}\right),\nonumber\\ \phi& = \left(\begin{array}{c} f_{\mathrm{(v_0,v_1)}} \\ f_{\mathrm{(v_0,v_2)}}\\f_{\mathrm{(v_0,v_3)}}\\ 0 \\0 \\0\\ \end{array}\right).
\end{align}}

The vector of fluxes and initial metabolite values were randomized to obtain
\begin{equation}
f=
\left(\begin{array}{c} f_{\mathrm{(v_0,v_1)}} \\
f_{\mathrm{(v_0,v_2)}}\\
f_{\mathrm{(v_0,v_3)}}\\
f_{\mathrm{(v_1,v_4)}}\\
f_{\mathrm{(v_2,v_4)}}\\
f_{\mathrm{(v_3,v_4)}}\\
f_{\mathrm{(v_4,v_5)}}\\
f_{\mathrm{(v_5,v_6)}}\\
f_{\mathrm{(v_5,v_6)}}\\
f_{\mathrm{(v_6,v_{n+1})}}\\ \end{array}\right) = \left(\begin{array}{c} 0.2729\\    0.0372 \\   0.6733 \\   0.4296 \\   0.4517 \\   0.6099  \\  0.0594   \\ 0.3158 \\   0.7727\\    0.6964 \\\end{array}\right), x_0 = \left(\begin{array}{c} 0.1253\\
    0.1302\\
    0.0924\\
    0.0078\\
    0.4231\\
    0.6556 \\  \end{array}\right).
\end{equation}

Using these values the equilibrium was calculated to be $\bar{x} = (0.6354,0.0824,\break
    1.1040,
    2.6211,
    0.2015,
    1.4122)$. The RCT network with the randomized initial values was simulated for 50 hours and the simulation results closely matched $\bar{x}$. The simulation results for the first 25 hours are shown in figure \ref{fig:sim_eq}.

\end{example}
\end{proposition}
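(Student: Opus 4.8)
The plan is to exploit the block-triangular structure $J(f) = \left[\begin{smallmatrix} J_1 & 0 \\ J_{21} & J_2\end{smallmatrix}\right]$ already established, which turns the problem into the analysis of the cascaded system \eqref{eq:reduced}. First I would treat the reduced system $\dot x_1 = J_1 x_1 + \phi_1$ in isolation, since its evolution does not depend on $x_2$. By Proposition~\ref{prop:invert-grounded-Lap}, $J_1$ is Hurwitz and invertible with $-J_1^{-1}$ having nonnegative entries; hence the variation-of-constants formula gives $x_1(t) = \bar x_1 + e^{J_1 t}(x_1(0) - \bar x_1)$ with $\bar x_1 = -J_1^{-1}\phi_1$, and $e^{J_1 t} \to 0$ exponentially, yielding global convergence to $\bar x_1$. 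Positivity of the relevant entries of $\bar x_1$ follows by writing $-J_1^{-1}=\int_0^\infty e^{J_1 t}\,dt$, whose $(v,w)$-entry is positive exactly when there is a path from $w$ to $v$; thus $(\bar x_1)_v>0$ precisely for the vertices $v$ reachable from $I$.

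Next I would analyze $x_2$ by regarding $x_1(t)$ as a known, exponentially converging input: the second equation of \eqref{eq:reduced} reads $\dot x_2 = J_2 x_2 + u(t)$ with $u(t) := J_{21}x_1(t) + \phi_2 \to u_\infty := J_{21}\bar x_1 + \phi_2$ exponentially. The crucial structural fact is that the subgraph $K$ on $V_2$ has no excretion edges and no edges back to $V_1$, so $J_2$ is the matrix of a closed Laplacian system, $\mathbf 1^T J_2 = \mathbf 0^T$. Applying $\mathbf 1^T$ to the $x_2$-equation yields the mass balance $\dot m_2 = \mathbf 1^T u(t)$ for $m_2 = \mathbf 1^T x_2$, so the long-run behavior of the mass in $V_2$ is governed entirely by the asymptotic net inflow rate $\mathbf 1^T u_\infty \ge 0$.

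The dichotomy then comes from deciding whether $u_\infty$ produces persistent inflow. In the unbounded case, a terminal component $G_T$ of $K$ (with no excretion) is reachable from $I$; I would propagate positivity of $\bar x$ along a path from $I$ into $G_T$ exactly as in the proof of Proposition~\ref{prop:A_neccessary}, to conclude that the asymptotic inflow rate into $G_T$ is bounded below by a positive constant. Since $G_T$ is terminal with no excretion, no mass can leave it, so its mass diverges, forcing $\|x_2(t)\|, \|x(t)\| \to \infty$. In the bounded case, no $V_2$-vertex is reachable from $I$ (otherwise some terminal component of $K$ would be, since every vertex reaches a terminal component); this forces $\phi_2 = 0$, and since any $V_1$-vertex feeding $V_2$ must itself be unreachable from $I$ and hence have a vanishing $\bar x_1$-entry by the path criterion above, also $J_{21}\bar x_1 = 0$, whence $u_\infty = 0$. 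The residual dynamics $\dot x_2 = J_2 x_2 + u(t)$ then have exponentially decaying forcing, and combining Proposition~\ref{prop:Laplacian} for the spectrum of $J_2$ (nullspace spanned by the stationary distributions $\pi_i$ of the terminal components, all other eigenvalues having negative real part) with variation of constants shows that $x_2(t)$ converges to an equilibrium $\bar x_2$ in that nullspace, with the stated support and proportionality to the $\pi_i$; the multiples are pinned down by the conserved limiting masses of the individual terminal components, exactly as in Proposition~\ref{prop:asymp-linear-closed}.

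The main obstacle I anticipate is the quantitative inflow estimate in the unbounded case: one must show that the inflow into $G_T$ does not vanish asymptotically, which requires establishing $\bar x_v > 0$ along a path from $I$ to $G_T$ and controlling the transient so that the net inflow stays positive for large $t$. This is where the positivity-propagation argument of Proposition~\ref{prop:A_neccessary} must be adapted to the equilibrium values $\bar x_1$ of the driving subsystem rather than to an assumed equilibrium of the whole network. A secondary technical point is justifying, via the decaying-input variation-of-constants integral, that the limit of $x_2$ is a genuine equilibrium (and not merely a bounded trajectory) and that the conserved component masses single out the precise multiples of the $\pi_i$.
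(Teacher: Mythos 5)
Your proposal is correct and takes essentially the same route as the paper: the paper's own treatment consists of establishing the block-triangular decomposition \eqref{eq:reduced}, noting that $J_1$ is Hurwitz and invertible with $-J_1^{-1}$ positive (Proposition \ref{prop:invert-grounded-Lap}) and that $J_2$ is a Laplacian-type matrix whose nullspace is described by Proposition \ref{prop:Laplacian}, and then deferring the remaining ``standard tools of analysis of linear dynamical systems'' to \cite{Bullo}, Theorem 9.13 --- which is exactly the cascade/variation-of-constants analysis you supply. The one point to tighten is your unbounded case: a path from $I$ to $G_T$ may pass through vertices of $V_2$ (not only $V_1$), so positivity cannot be read off the equilibrium entries of $\bar x_1$ alone but must be propagated dynamically along the path, e.g.\ by the comparison $\dot x_v \ge f_{(w,v)} x_w - d_v x_v$ (with $d_v$ the total outflow rate of $v$) giving $\liminf_{t\to\infty} x_v(t) > 0$ inductively; with that standard adjustment your argument closes.
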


\subsection{Special LIFE systems}

%%%%%%%%%%%%%%%%%%%% Special LIFE
In this section, we focus on special LIFE systems. We first recall some interesting results from \cite{Maeda} valid under Assumption~(B), and then we exploit them together with the spectral properties of $J(f)$ described in Sect.~\ref{sec:linear} in order to fully characterize equilibria and convergence of special LIFE systems under Assumption~(C). We refer the reader to \cite{compartmental} for some results valid under great generality, with assumptions less restrictive than Assumption~(B).
As for linear systems, we first notice that weakly connected components correspond to subsystems which have no influence on each other, and hence can be studied separately; if there are any weakly connected components with no intake nor excretion, they are subsystems with constant total mass.

\begin{proposition} (\cite[Theorem~6]{Maeda}) \label{prop:Maeda-closed}
Consider the special LIFE system under Assumption (B) with no intakes nor excretions. The following properties hold:
\begin{itemize}
\item The total mass of the system $m = \sum_{v\in V} x_v$ is constant in time.
\item From any positive initial condition $x(0)$, the system tends to the equilibrium set. 
\item Moreover, if there is a unique terminal component, then there exists a unique equilibrium with positive entries with the same mass as the initial mass, and the system converges to it.
\end{itemize}
\end{proposition}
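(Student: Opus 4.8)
The plan is to treat the system under Assumption~(B) with no intakes nor excretions as a closed nonlinear \emph{compartmental} system and to exploit two monotone functionals. Writing the dynamics as $\dot x_v = \sum_{(w,v)\in E} H_{(w,v)}(x_w) f_{(w,v)} - \sum_{(v,w)\in E} H_{(v,w)}(x_v) f_{(v,w)}$, the first bullet is immediate: summing over $v\in V$, each edge $(w,v)$ contributes $+H_{(w,v)}(x_w)f_{(w,v)}$ to $\dot x_v$ and the opposite to $\dot x_w$, so all terms cancel and $\dot m = 0$. Combining this with the invariance of the positive orthant (Proposition~\ref{prop:positivity}), every trajectory stays in the compact simplex $\{x\ge 0:\sum_v x_v = m_0\}$; hence solutions are global and each $\omega$-limit set is nonempty, compact and invariant.

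For the second bullet the structural remark is that the Jacobian $DF(x)$ is Metzler (its off-diagonal entries are $H'_{(w,v)}(x_w)f_{(w,v)}\ge 0$) with zero column sums (differentiate $\dot m\equiv 0$), so the system is cooperative. I would then produce two monotone quantities. First, the \emph{speed}: setting $u=\dot x$ one has $\dot u = DF(x)\,u$, and the edgewise $L^1$ estimate for a Metzler matrix with zero column sums gives $\frac{d}{dt}\|\dot x\|_1\le 0$. Second, for every terminal component $T$ the mass $M_T(t)=\sum_{v\in T}x_v$ satisfies $\dot M_T = \sum_{(w,v):\,w\notin T,\,v\in T} H_{(w,v)}(x_w)f_{(w,v)}\ge 0$, since $T$ has no outgoing edges and internal flows cancel; being bounded, each $M_T$ converges. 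I would then invoke LaSalle's invariance principle with $V(x)=\|F(x)\|_1$: on the invariant $\omega$-limit set $V$ is constant, so the $L^1$ estimate becomes an equality, which forces the velocity to have a sign that propagates forward along edges, while $\dot M_T\equiv 0$ forces the external inflow into every terminal component to vanish; an induction on the distance to the terminal components then sends every transient coordinate to $0$ and leaves each terminal component at an internal equilibrium, so the $\omega$-limit set consists only of equilibria. Proving that this largest invariant set is \emph{exactly} the equilibrium set, i.e.\ ruling out persistent motion, is the main obstacle.

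For the third bullet, let $V_T$ be the unique terminal component. I would first show any equilibrium $\bar x$ vanishes off $V_T$: if $\bar x_v>0$ then balance forces positive flow on an outgoing edge, hence positive inflow and $\bar x>0$ at a successor, and iterating along paths (which all reach $V_T$) together with the leak out of any non-terminal strongly connected component forces such mass to zero. On $V_T$ the restricted system is closed and strongly connected; existence of a positive equilibrium of prescribed mass $m_0$ follows from Brouwer's theorem applied to the time-$\tau$ flow on the compact simplex, and positivity from backward propagation (a zero entry would force its predecessors to be zero, contradicting $m_0>0$ by strong connectivity). Uniqueness would come from the contraction, valid for any two solutions,
\[
\frac{d}{dt}\|x-y\|_1=\sum_{(a,b)\in E}\big(\operatorname{sgn}(x_b-y_b)-\operatorname{sgn}(x_a-y_a)\big)\big(H_{(a,b)}(x_a)-H_{(a,b)}(y_a)\big)f_{(a,b)}\le 0,
\]
each summand being $\le 0$ because the last factor carries the sign of $x_a-y_a$ and $\operatorname{sgn}\cdot\operatorname{sgn}\le 1$. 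For two equilibria of equal mass this derivative is $0$, so $z=x-y$ has a sign propagating forward along edges; the forward-closed set $\{z>0\}$, if nonempty, must contain $V_T$, yet off $V_T$ one has $z\ge 0$, contradicting $\sum_v z_v=0$ unless $z\equiv 0$. Convergence to this unique $\bar x$ then follows from the second bullet, since the $\omega$-limit set lies in the equilibrium set and in the mass-$m_0$ simplex, whose intersection is the single point $\bar x$.
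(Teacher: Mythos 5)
The paper itself gives no proof of this proposition: it is quoted, with attribution, as Theorem~6 of the cited Maeda--Kodama--Ohta reference. So your attempt has to be measured against that classical compartmental-systems argument, which your overall strategy (mass conservation, $L^1$-type functionals, LaSalle, sign propagation along edges) in fact parallels closely. Within that strategy, your first bullet is complete and correct. In the third bullet, the support analysis (equilibria vanish off $V_T$ and are strictly positive on $V_T$) and the uniqueness argument are sound: for two \emph{equilibria} $x,y$ of equal mass your edge identity involves no time derivative at all --- it is the purely algebraic rearrangement $\sum_v \operatorname{sgn}(z_v)\,(F_v(x)-F_v(y)) = \sum_{(a,b)\in E}\bigl(\operatorname{sgn}(z_b)-\operatorname{sgn}(z_a)\bigr)\,f_{(a,b)}\bigl(H_{(a,b)}(x_a)-H_{(a,b)}(y_a)\bigr)$ with $z=x-y$ --- so every nonpositive summand must vanish, and strict monotonicity of $H_e$ plus strong connectivity of $V_T$ give $x=y$ as you argue. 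Two caveats: a Brouwer fixed point of the time-$\tau$ map is a priori only a $\tau$-periodic point, so you need fixed points for $\tau=2^{-k}\to 0$, a convergent subsequence, and continuity of the flow to produce an actual equilibrium; and the convergence claim at the end of your third bullet rests entirely on the second bullet.

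The genuine gap is the second bullet, and you flag it yourself: ``ruling out persistent motion is the main obstacle.'' That obstacle is not a deferrable technicality; it is exactly the nonoscillation content of the theorem being cited, i.e.\ the heart of the proposition, and your sketch cannot be completed as written, for two concrete reasons. First, equality in $\frac{\mathrm d}{\mathrm dt}\|\dot x\|_1\le 0$ propagates the sign of $\dot x_w$ to $\dot x_v$ only across edges where $f_{(w,v)}H_{(w,v)}'(x_w(t))>0$; Assumption~(B) makes $H_e$ differentiable and strictly increasing, which still allows $H_e'$ to vanish along the trajectory, so the propagation can stall exactly where you need it --- inside a terminal component, to contradict $\sum_{v\in T}\dot x_v=0$. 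Second, and more fundamentally, your displayed formula for $\frac{\mathrm d}{\mathrm dt}\|x-y\|_1$ is not an identity for non-constant solutions: at vertices where $x_v-y_v$ vanishes, the correct (Dini) derivative carries extra nonnegative terms $|\dot x_v-\dot y_v|$, and a careful computation shows these exactly cancel the corresponding edge contributions in the upper bound. Consequently, constancy of the $L^1$ functional along an invariant set only yields that the sign of the difference (or of the velocity) cannot \emph{flip} across an edge; it does not exclude that the sign \emph{dies} at a vertex, and therefore does not by itself force the invariant set to consist of equilibria. Closing this requires genuinely more machinery --- for instance, monotonicity of the flow (the Kamke condition does hold under~(B)) together with mass conservation make the flow $L^1$-nonexpansive by the Crandall--Tartar argument, the equality case forces $\Phi_t(u)\vee\Phi_t(v)=\Phi_t(u\vee v)$, and a Gronwall-type propagation of strict inequalities along edges of the strongly connected terminal component then finishes --- and that work is precisely what the paper's citation of Maeda's Theorem~6 stands in for. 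As it stands, your proposal proves the first bullet and the equilibrium structure in the third, but not the convergence statements in the second and third.
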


For the general case with intakes and/or excretions, the following result holds on the asymptotic behavior of the dynamics.
\begin{proposition} (\cite[Theorems~2 and 3]{Maeda})  \label{prop:Maeda2-3}
For special LIFE systems under Assumption (B), with a positive initial condition,
\begin{itemize}
\item Trajectories are bounded if and only if there exists an equilibrium with positive entries;
\item If trajectories are bounded, then they approach an equilibrium set for $t \to \infty$, and if moreover the equilibrium set consists of isolated points, then they converge to some equilibrium.
\end{itemize}
\end{proposition}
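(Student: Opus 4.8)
The plan is to treat the system under Assumption~(B) as a cooperative (monotone) compartmental system and to combine an order/invariant-box argument for the boundedness dichotomy with LaSalle's invariance principle for the asymptotic behaviour. First I would record the structural facts I intend to use. Writing the dynamics entrywise as in \eqref{eq:mass-cons-Kirch}, under~(B) the inflow to $v$ from $w$ equals $H_{(w,v)}(x_w) f_{(w,v)}$ and depends only on $x_w$, while every outflow from $v$ equals $H_{(v,w)}(x_v) f_{(v,w)}$ and depends only on $x_v$; since each $H_e$ is strictly increasing and each $f_e>0$, the off-diagonal entries of the Jacobian of $x\mapsto S(x)f$ are nonnegative, so the vector field is cooperative. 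By Proposition~\ref{prop:positivity} the nonnegative orthant is forward invariant, and the mass balance reads $\dot m = \sum_{v\in I} f_{(v_0,v)} - \sum_{v\in X} H_{(v,v_{n+1})}(x_v) f_{(v,v_{n+1})}$, a constant intake minus excretions increasing in the $x_v$. As noted in the text, components with neither intake nor excretion are governed by Proposition~\ref{prop:Maeda-closed}, so I may assume the weakly connected component under study contains an intake or excretion.

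For the second bullet I would apply LaSalle's invariance principle on the compact forward-invariant set carrying the (now bounded) trajectory. The core step is to exhibit a Lyapunov function $V$ with $\dot V\le0$ whose vanishing set, intersected with the largest invariant subset, is precisely the equilibrium set; for monotone compartmental systems a natural candidate is a weighted gauge of $x$ relative to $\bar x$ built from the primitives of the $H_e$, designed so that $\dot V=0$ forces every net edge flow to vanish, i.e.\ $S(x)f=0$. LaSalle then yields convergence of the trajectory to the equilibrium set. When the equilibria are isolated, I would invoke connectedness of the $\omega$-limit set: a connected subset of a discrete set is a single point, so the trajectory converges to one equilibrium.

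I would then prove the first bullet in the order suggested by the previous step. The direction \emph{bounded $\Rightarrow$ equilibrium} is easy once the convergence statement is available: a bounded trajectory has a nonempty, compact, connected, invariant $\omega$-limit set, which by the second bullet consists of equilibria, producing an equilibrium in the closed orthant. The delicate direction is \emph{equilibrium $\Rightarrow$ bounded}. Here I would exploit cooperativity: a point $y\ge 0$ with $S(y)f\le 0$ componentwise is a super-equilibrium, and the box $\{0\le x\le y\}$ is then forward invariant, because on the face $x_v=y_v$ with $x\le y$ the inflows are largest at $x=y$, whence $\dot x_v\le (S(y)f)_v\le0$ (a Nagumo subtangentiality check). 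The task thus reduces to producing, from the given equilibrium $\bar x$ and the datum $x(0)$, a super-equilibrium $y$ dominating both. The existence of $\bar x$ is exactly what rules out the obstruction of Proposition~\ref{prop:A_neccessary} (a terminal component with no excretion reachable from $I$) and, crucially, encodes that the constant intakes can be balanced even when the $H_e$ saturate.

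The main obstacle I anticipate is concentrated in the nonlinear features of~(B). First, the \emph{equilibrium $\Rightarrow$ bounded} direction cannot rely on the graph structure alone, since saturating $H_e$ cap the excretion rates; one must genuinely use $\bar x$ to build the dominating super-equilibrium $y$, and verifying that such a $y$ exists (rather than only $\bar x$, which need not dominate $x(0)$) is the crux. Second, choosing the Lyapunov function so that $\{\dot V=0\}$ collapses exactly onto $\{S(x)f=0\}$, and checking that the largest invariant set therein contains no spurious recurrent behaviour, is where the strict monotonicity of the $H_e$ and the compartmental sign structure must be used with care.
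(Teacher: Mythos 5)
You should first note that the paper does not prove this proposition at all: it is imported verbatim from \cite[Theorems~2 and~3]{Maeda}, so there is no internal argument to match, and your proposal has to be judged as a self-contained proof attempt. Judged that way, it has a genuine gap exactly at the step you yourself call the crux: the direction \emph{equilibrium $\Rightarrow$ bounded} via a super-equilibrium $y$ dominating both $\bar x$ and $x(0)$. Such a $y$ need not exist under Assumption~(B). Concretely, take the chain $v_0 \to v_1 \to v_2 \to v_3$ with $v_3$ the excretion vertex, all fluxes equal to $1$, $H_{(v_1,v_2)}(s)=s$, $H_{(v_2,v_3)}(s)=2s/(1+s)$ (a Hill-type function, admissible under (B)), and intake $1$. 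Then $\bar x=(1,1)$ is an equilibrium with positive entries, and by the proposition every trajectory is bounded; but for the initial condition $x(0)=(10,1)$ there is no super-equilibrium dominating it, since any $y$ with $y_1\ge 10$ gives $(S(y)f)_2 = y_1 - 2y_2/(1+y_2) \ge 10-2 >0$. Boundedness here is a genuinely dynamic effect --- $x_1(t)=1+9e^{-t}$ decays before it can push unbounded mass into $v_2$ --- and no invariant-box (Nagumo) argument anchored at a static super-equilibrium can capture it. So the route you propose for the hard direction provably fails, not merely ``needs care.''

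There is a second structural problem: for the second bullet you never exhibit the Lyapunov function, and the candidate you describe (a gauge of $x$ relative to $\bar x$ built from primitives of the $H_e$) presupposes an equilibrium $\bar x$; since you then deduce \emph{bounded $\Rightarrow$ equilibrium exists} from the second bullet, the argument as structured is circular. The way this is resolved in \cite{Maeda} (and in the compartmental-systems literature generally) is to use a Lyapunov-type quantity that requires no equilibrium: $V(x)=\|S(x)f\|_1=\sum_{v}|\dot x_v|$, which is nonincreasing along trajectories because under (B) the Jacobian of the vector field is a compartmental matrix (Metzler with nonpositive column sums), so the variational equation contracts the $\ell_1$ norm; the nonoscillation analysis built on this quantity is what yields both the approach to the equilibrium set and the boundedness dichotomy. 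Your opening observations --- cooperativity of the vector field under (B), forward invariance of the orthant, and invariance of boxes $\{0\le x\le y\}$ below an actual super-equilibrium --- are correct and useful, but they do not suffice to close either bullet.
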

A caveat reported in \cite{Maeda} is that the second item in Proposition~\ref{prop:Maeda2-3} does not rule out the possibility to have non-periodic oscillatory trajectories that approach the equilibrium set lying outside it, and approaching it rotating infinitely many times; this can happen in the case of a connected compact equilibrium set.

\begin{remark}
Recall that, by Proposition~\ref{prop:A_neccessary}, if there exists a vertex $v\in V$ such that there is a path from $I$ to $v$ and no path from $v$ to $X$, then there exists no equilibrium. By the first item of Proposition~\ref{prop:Maeda2-3}, this further implies that all trajectories are unbounded.
\end{remark}

The following results concern the existence and uniqueness of equilibria.
\begin{proposition}(\cite[Theorems~4 and 5]{Maeda}) \label{prop:Maeda4-5}
For special LIFE systems under Assumption (B), the following holds.
\begin{itemize}
\item There exists an equilibrium with positive entries for arbitrary constant intakes if and only if for all $v \in V$ there is a path to $X$ such that all edges in the path have $\lim_{x_v\to \infty} H_e(x_v) = +\infty$;
\item If there exists an equilibrium with positive entries, and if there exists a path from all $v \in V$ to $X$, then the equilibrium is unique.
\end{itemize}
\end{proposition}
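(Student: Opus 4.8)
The plan is to treat the two bullets separately. The uniqueness statement (second bullet) admits a direct monotonicity-plus-cut argument that needs no fixed-point theory, while the existence statement (first bullet) is best handled by reinterpreting the path condition as a statement about cut capacities in a nonlinear max-flow problem and then invoking the boundedness dichotomy of Proposition~\ref{prop:Maeda2-3}.

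\textbf{Uniqueness (second bullet).} Writing the dynamics under (B) entrywise as the mass balance $\dot x_v = \phi_v + \sum_{w:(w,v)\in E} f_{(w,v)}H_{(w,v)}(x_w) - \sum_{w:(v,w)\in\tilde E} f_{(v,w)} H_{(v,w)}(x_v)$ with $\phi_v$ the fixed intake at $v$, suppose $\bar x$ and $\bar y$ are two positive equilibria. I would subtract their balance equations, so that the fixed intake $\phi_v$ cancels, and set $g_e := f_e\,[H_e(\bar x_{\mathrm{tail}(e)}) - H_e(\bar y_{\mathrm{tail}(e)})]$; strict monotonicity of $H_e$ makes $g_e$ share the sign of the difference of the two equilibria at the tail of $e$. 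Letting $P = \{v: \bar x_v > \bar y_v\}$ and summing the subtracted balance over $v\in P$, the edges internal to $P$ cancel and the intakes are already gone, leaving
\[
\sum_{\substack{(w,v)\in \tilde E\\ w\notin P,\; v\in P}} g_{(w,v)}
\;=\;
\sum_{\substack{(v,w)\in \tilde E\\ v\in P,\; w\notin P}} g_{(v,w)} .
\]
Every term on the left has its tail outside $P$, hence $g\le 0$, while every term on the right has its tail in $P$, hence $g>0$ whenever that edge exists. Equality forces both sides to vanish, so $P$ has \emph{no} edge leaving it, towards $V\setminus P$ nor towards the sink $v_{n+1}$; if $P$ were nonempty, a vertex of $P$ could then reach no excretion, contradicting the hypothesis that every vertex has a path to $X$. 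Hence $P=\emptyset$, i.e.\ $\bar x\le \bar y$, and exchanging the roles of $\bar x$ and $\bar y$ gives $\bar x=\bar y$. Note this uses only the cancellation of the fixed intake, strict monotonicity of the $H_e$, positivity of the fluxes, and output-connectedness — the unboundedness hypothesis is not needed.

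\textbf{Existence (first bullet).} For each edge set the capacity $c_e := f_e\,\lim_{s\to\infty}H_e(s)\in(0,+\infty]$, which equals $+\infty$ exactly when $\lim_x H_e = +\infty$. The key reformulation is that the path condition for a vertex $v$ is precisely that $v$ is not separated from the sink by any finite-capacity cut. Letting $W$ be the set of vertices admitting no infinite-capacity path to $X$, one checks directly that every edge leaving $W$ (to $V\setminus W$ or to $v_{n+1}$) must have finite capacity, so $C_W := \sum_{e\text{ leaving }W} c_e$ is finite. For the \emph{only if} direction, assume the path condition fails, so $W\neq\emptyset$; since arbitrary intakes allow injecting directly into $W$, I would place a constant intake on a vertex of $W$ exceeding $C_W$. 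Summing the mass balance over $W$ and bounding the outflow from $W$ by $C_W$ while the inflow from $V\setminus W$ stays non-negative shows that the mass in $W$ is strictly increasing, so trajectories are unbounded and, by the first item of Proposition~\ref{prop:Maeda2-3}, no positive equilibrium exists for that intake. For the \emph{if} direction, assume the path condition holds; then no finite cut separates any vertex from $X$, and I would establish a compact forward-invariant region for an arbitrary fixed intake, so that all trajectories are bounded, whence existence of a positive equilibrium again follows from the first item of Proposition~\ref{prop:Maeda2-3}.

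\textbf{Main obstacle.} The delicate point is the boundedness claim in the \emph{if} direction: unlike the equilibrium computation underlying the \emph{only if} direction, here one must control the whole transient, during which mass can temporarily back up at interior vertices even though each admits an infinite-capacity route to the sink. The natural device is a nonlinear max-flow/min-cut duality — since no finite cut separates any vertex from $X$, the achievable outflow toward the sink dominates the fixed intake once the total mass is large — which I would encode as a Lyapunov-type estimate on a weighted mass $\sum_v \mu_v x_v$, with weights $\mu_v$ chosen to decrease along the selected infinite-capacity paths, producing the desired invariant region. Everything else reduces to routine mass-balance bookkeeping together with the cited compartmental-systems results.
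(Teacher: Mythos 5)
First, a point of reference: the paper does not prove this proposition at all --- it is imported verbatim from \cite[Theorems~4 and 5]{Maeda} --- so your attempt has to be judged on its own merits rather than against an in-paper argument. Judged that way, your second bullet is proved correctly and completely: the constant intakes cancel under subtraction of the two balance laws, strict monotonicity of the $H_e$ ties the sign of $g_e$ to the sign of $\bar x - \bar y$ at the tail, internal edges of $P=\{v:\bar x_v>\bar y_v\}$ cancel in the sum, and the resulting identity forces $P$ to have no outgoing edge (neither to $V\setminus P$ nor to $v_{n+1}$), which contradicts outflow-connectedness unless $P=\emptyset$; this is the classical compartmental-systems argument and it is sound. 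The \emph{only if} half of the first bullet is also correct, and can even be shortened: summing the \emph{equilibrium} balance over $W$ gives $0 \ge \phi^* - C_W > 0$ at once, with no detour through trajectories and Proposition~\ref{prop:Maeda2-3}. Your cut-based reading of the path condition (paths to $v_{n+1}$, i.e.\ including the excretion edge) is also the right one --- the literal statement ``path to $X$'' would be vacuous for $v\in X$.

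The genuine gap is the \emph{if} direction of the first bullet, exactly where you placed your ``main obstacle'': the Lyapunov device you propose does not work. Take $V=\{v_1,v_2\}$ with edges $(v_1,v_2)$, $(v_2,v_1)$, $(v_2,v_{n+1})$, all fluxes equal to $1$, constant intake $\phi>0$ at $v_1$, and $H_{(v_1,v_2)}(s)=H_{(v_2,v_{n+1})}(s)=s$, $H_{(v_2,v_1)}(s)=s^3$. Every edge is unbounded, so the path condition holds, and the only path from $v_1$ to the sink is $v_1\to v_2\to v_{n+1}$, forcing weights $\mu_1>\mu_2>0$. The dynamics are $\dot x_1 = \phi + x_2^3 - x_1$, $\dot x_2 = x_1 - x_2^3 - x_2$, so
\[
\frac{\mathrm d}{\mathrm dt}\bigl(\mu_1 x_1 + \mu_2 x_2\bigr)
= \mu_1\phi + (\mu_1-\mu_2)\,x_2^3 - (\mu_1-\mu_2)\,x_1 - \mu_2 x_2 ,
\]
which tends to $+\infty$ along the ray $x_1=0$, $x_2\to\infty$. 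Hence no sublevel set of the weighted mass is forward invariant (and one checks the same failure for $\mu_2\ge\mu_1$, so no constant linear weighting works at all). Note the system here \emph{does} have a positive equilibrium, $\bar x=(\phi+\phi^3,\phi)$, and bounded trajectories; the point is that your certificate cannot see it. The obstruction is structural: an unbounded edge pointing up the weight order can carry mass against the ordering at a faster asymptotic rate than the selected paths drain it, so ``outflow dominates intake once the weighted mass is large'' is simply false pointwise. Closing the gap requires a genuinely different argument --- e.g.\ constructing a forward-invariant box $\prod_v [0,a_v]$ by solving the face inequalities $\phi_v + \sum_{(w,v)\in E} f_{(w,v)}H_{(w,v)}(a_w) \le \sum_{(v,w)\in\tilde E} f_{(v,w)}H_{(v,w)}(a_v)$, whose solvability under the unboundedness hypothesis is itself the heart of Maeda's Theorem~4, or a direct construction of the equilibrium from the coupled conservation equations. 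As written, the forward implication of the first bullet remains unproved.
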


Notice that in the first item, the `only if' part is true only when we require existence of equilibria with positive entries for completely arbitrary intakes: arbitrary intake set $I$ and arbitrary positive values of the corresponding fluxes; this strong condition is not necessary to have an equilibrium with positive entries for a given set $I$ and a given value of the intake fluxes.
Also notice that this statement considers fixed fluxes $f_e$, differently from Proposition~\ref{prop:rankS2}, where $x$ is fixed and fluxes $f_e$ are allowed to vary (except for intake fluxes); the crucial difference is that the product $H_e(x) f_e$ can be made arbitrarily large in the context of Proposition~\ref{prop:rankS2}, even in the case where $H_e(x)$ is bounded.

A remark about the second item is that, by Proposition~\ref{prop:Maeda2-3}, the existence and uniqueness of the equilibrium with positive entries further implies that all trajectories converge to such equilibrium.

To apply the first item of Proposition~\ref{prop:Maeda2-3} or the second item of Proposition~\ref{prop:Maeda4-5}, one needs to already have the knowledge about existence of an equilibrium with positive entries. This happens for example in the case where one starts by fixing a desired equilibrium with positive entries $\bar x$, and then applies the extreme pathways technique in order to design suitable fluxes ensuring that $\bar x$ is an equilibrium of the system. Under some assumptions on the graph, the above propositions then ensure uniqueness of the equilibrium, and its global asymptotic stability.

\subsubsection{Special LIFE systems under Assumption (C)}
In the remainder of this section, we consider Assumption (C). In this case, recall that the dynamics can be re-written as $\dot x = J(f) h(x) + \phi$. Different from linear systems, $h(x)$ can contain non-linearities, but the matrix $J(f)$ has the same definition as for linear systems, so that its spectral properties are described by Propositions~\ref{prop:Laplacian} and~\ref{prop:invert-grounded-Lap}.
Also notice that equilibria, i.e. solutions of $J(f) h(x) = - \phi$, can be found by solving $J(f) h = -\phi$, where $h $ is an unknown vector in $(\R_+)^n$, and then solving $h(x) = h$. The latter is equivalent to solving $H_{v_i}(x_{v_i}) = h_i$, for $i = 1,\dots, n$.

We start by studying the case with no intakes nor excretions. In this case, by Proposition~\ref{prop:Laplacian},
$J(f) h = 0$ means $h \in \mathcal N(J(f))$, where the nullspace $\mathcal N(J(f))$ is generated by $\pi_1, \dots, \pi_k$, associated with terminal components $G_1, \dots, G_k$.
Without loss of generality, we re-label vertices so that the first $n_1$ vertices encompass the first terminal component $G_1$, the following $n_2$ vertices encompass the second terminal component $G_2$, and so on, up to the last $n_k$ vertices encompass the last terminal component $G_k$, and finally the remaining vertices are not in any terminal component (say there are $n_0$ of them).
Denote the corresponding subblocks of vector $h$ as $h^{(1)}, \dots, h^{(k)}$ and $h^{(0)} $ for the non-terminal ones, and similarly define $x^{(1)}, \dots, x^{(k)}$ and $x^{(0)}$ for vector $x$.
By Proposition~\ref{prop:Laplacian}, the nullspace of $J(f)$ is
$\mathcal N(J(f)) = \{h \in (\R_+)^n : \, h^{(j)} = \alpha_j \tilde \pi_j, \, \alpha_j \in \R \text{ and } h^{(0)} = 0\, \text{, for } j \in \{1,\ldots, k\} \}$, where $\tilde \pi_i $ is the restriction of $\pi$ to the component $G_i$, i.e. is a vector of size $n_i$ with strictly positive entries, representing the stationary distribution of the Markov chain whose generator is the transpose of the restriction of $J(f)$ to $G_i$.
Now we need to characterize the set of equilibria with positive entries $\bar X := \{x \in (\R_+)^n \text{ such that } h(x) \in \mathcal N(J(f))\}$.
Recall that $H_{v}(x_{v})$ are strictly increasing functions, being $0$ when $x_v=0$.
Denote by $R_v$ the range of $H_v$ (for $x_v \ge 0$), notice that either $R_v = [0, h^{\max})$, or $R_v = [0, +\infty)$.
Now denote by $\mathcal H_j$ the set of vectors $\alpha_j \tilde \pi_j$ such that $\alpha_j \ge 0$ and  $[\alpha_j \tilde \pi_j]_{v} \in R_v$ for all $v$ in $G_j$.
Then denote by $H_j^{-1}h^{(j)}$ the vector obtained from $h^{(j)} \in \mathcal H_j$ by applying entry-wise the inverse functions $H_v^{-1}$.
Finally we obtain
$\bar X= \{ x \in (\R_+)^n \text{ such that } x^{(j)} = H_j^{-1}h^{(j)}, h^{(j)} \in \mathcal H_j \text{ and } h^{(0)} = 0  \}$.

Having characterized the set of equilibria with positive entries, now recall that Proposition~\ref{prop:Maeda-closed} applies, and trajectories remain bounded, with total mass constant in time, and approach the above-described equilibrium set.
We focus on the case with intakes and/or excretions. The definition of the reduced system and the partitioning in two subsystems introduced for linear systems applies also to special LIFE systems under Assumption (C), with the only difference that now the dynamics are non-linear:
\[
\begin{cases}
\dot x_1 = J_1 h_1(x_1) + \phi_1 \\
\dot x_2 =  J_2 h_2(x_2)  + J_{21} h_1(x_1) + \phi_2.
\end{cases}
\]
vectors $h_1(x_1)$ and $h_2(x_2)$ having replaced $x_1$ and $x_2$ in \eqref{eq:reduced}.

Hence, one can obtain the following analogous of Proposition~\ref{prop:asymptotics-linear}.
\begin{proposition}
Consider a weakly connected special LIFE system satisfying Assumption~(C).
Denote by $R_v $ the range of the function $H_v(x_v)$ (for $x_v \ge 0$)
and define $\bar h = -J_1^{-1} \phi_1$.
If $\bar h_v \in R_v$ for all $v \in V_1$, then
from any initial condition, the reduced system converges to its unique equilibrium $\bar x_1$ defined by $[\bar x_1]_v = H_v^{-1} (\bar h_v)$.
Otherwise, the system has no equilibrium, and $\lim_{t \to \infty} \| x(t) \| = + \infty$.

In the case where $x_1(t)$ converges to $\bar x_1$, if there are some vertices not connected to $X$, then:
\begin{itemize}
\item If there exists a terminal component $G_T$ with no excretions such that there is a path from $I$ to $G_T$, then the mass in $G_T$ grows unbounded,
and hence also $\lim_{t \to \infty} \| x_2(t) \| = + \infty$ and $\lim_{t \to \infty }\| x(t) \|= + \infty$;
\item If for all terminal components with no excretions $G_1, \dots, G_k$ there is no path from $I$ to $G_i$, then the mass of the system remains bounded, and moreover entries of $x_2(t)$ corresponding to non-terminal components converge to zero, while the restriction of $\bar x_2$ to a terminal component $G_i$ approaches the equilibrium set constructed as follows.
The restriction of $J_2$ to vertices in $G_i$ has a nullspace generated by a single positive vector $\tilde \pi_i$;
let $H_i$ denote the subset of such nullspace given by vectors $h = \alpha \tilde{\pi}_i$ such that $h_v \in R_v$ for all vertices $v$ of $G_i$;
the equilibrium set is given by $x$'s such that there exist $h \in H_i$ verifying $x_v = H_v^{-1}(h_i)$ for all vertices $v$ of $G_i$.
\end{itemize}
\end{proposition}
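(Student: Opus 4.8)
The plan is to exploit the block-triangular decomposition into the reduced system $\dot x_1 = J_1 h_1(x_1) + \phi_1$ and the driven system $\dot x_2 = J_2 h_2(x_2) + J_{21} h_1(x_1) + \phi_2$, and to transport the analysis of the linear analog, Proposition~\ref{prop:asymptotics-linear}, across the monotone change of variables $h = h(x)$, $x_v = H_v^{-1}(h_v)$. I would first dispose of the reduced system, then treat $x_2$ as an asymptotically autonomous perturbation of a closed Laplacian system.

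For the reduced system, every vertex of $V_1$ has a path to $X$, so Proposition~\ref{prop:invert-grounded-Lap} gives that $J_1$ is invertible and Hurwitz with $-J_1^{-1}$ having positive entries. An equilibrium solves $J_1 h_1(\bar x_1) = -\phi_1$, i.e. $h_1(\bar x_1) = -J_1^{-1}\phi_1 = \bar h$; since each $H_v$ is a strictly increasing bijection of $[0,\infty)$ onto $R_v$, this is solvable (uniquely) iff $\bar h_v \in R_v$ for every $v$, yielding $[\bar x_1]_v = H_v^{-1}(\bar h_v)$. When it is solvable, the reduced graph has all vertices connected to $X$, so the second item of Proposition~\ref{prop:Maeda4-5} gives uniqueness, and Proposition~\ref{prop:Maeda2-3} (boundedness is equivalent to existence of a positive equilibrium, and a unique equilibrium is a global attractor) gives convergence to $\bar x_1$. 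When $\bar h_v \notin R_v$ for some $v$, no positive equilibrium exists, so by Proposition~\ref{prop:Maeda2-3} the trajectories are unbounded.

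Assuming now $x_1(t) \to \bar x_1$, the forcing $J_{21} h_1(x_1(t)) + \phi_2$ converges to the constant $\psi := J_{21} h_1(\bar x_1) + \phi_2$, so $x_2$ obeys an asymptotically autonomous system whose limit is the closed dynamics $\dot x_2 = J_2 h_2(x_2) + \psi$ on the subgraph $K$ induced on $V_2$; here $J_2$ is a genuine $J(f)$-matrix (no excretions in $K$), with nullspace described by Proposition~\ref{prop:Laplacian}, and $V_2$ has no outgoing edges, so $\sum_{v\in V_2} x_v$ is nondecreasing. If some terminal component $G_T$ is reachable from $I$, then by Proposition~\ref{prop:A_neccessary} no equilibrium exists, hence trajectories are unbounded by Proposition~\ref{prop:Maeda2-3}; monotonicity of the $V_2$-mass upgrades this to $\sum_{v\in V_2} x_v \to \infty$, and following the path $I \to G_T$ (each $V_1$-vertex $w$ on it having $[\bar x_1]_w > 0$, hence delivering a persistent positive inflow into $K$) localizes the growth to $G_T$. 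In the opposite case no terminal component is reachable from $I$: then $\phi_2 = 0$, and any edge $(w,v)$ from $V_1$ into $V_2$ has $w$ not reachable from $I$, whence $\bar h_w = (-J_1^{-1}\phi_1)_w = 0$ (the $(w,u)$ entry of $-J_1^{-1}$ being positive exactly when $u$ has a path to $w$) and the asymptotic inflow $f_{(w,v)} H_w([\bar x_1]_w)$ vanishes; thus $\psi = 0$ and $x_2$ evolves asymptotically as the closed system $\dot x_2 = J_2 h_2(x_2)$, to which Proposition~\ref{prop:Maeda-closed} and the equilibrium-set description $\bar X$ from the preceding paragraph apply, giving the stated convergence of non-terminal entries to zero and of each terminal block to the set $H_i$.

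The main obstacle is the step from ``the input converges to $\psi$'' to honest statements about $\lim_{t\to\infty} x_2(t)$, since an asymptotically autonomous system need not inherit the asymptotics of its limit; I would control this using the monotonicity of the $V_2$-mass (for the divergent case) and a trapping/comparison estimate on each terminal block together with the uniqueness afforded by Proposition~\ref{prop:Maeda4-5} (for the convergent case). The secondary technical points are promoting the mere unboundedness supplied by Proposition~\ref{prop:Maeda2-3} to genuine divergence $\|x(t)\| \to +\infty$, and the localization of the blow-up to $G_T$, which I would settle by the same inductive positivity argument along the $I$-to-$G_T$ path used in Proposition~\ref{prop:A_neccessary}, treating the saturating case $R_v = [0,h^{\max})$ (where outflow cannot balance inflow, so mass piles up) on its own.
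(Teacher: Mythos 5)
Your first paragraph is essentially the paper's own argument for the reduced system: Proposition~\ref{prop:invert-grounded-Lap} gives invertibility of $J_1$ and positivity of $-J_1^{-1}$, strict monotonicity of the $H_v$ turns $J_1 h_1(x_1)=-\phi_1$ into the unique candidate $[\bar x_1]_v=H_v^{-1}(\bar h_v)$ precisely when $\bar h_v\in R_v$, and Proposition~\ref{prop:Maeda2-3} converts existence/non-existence of this equilibrium into convergence/unboundedness. Your further observation that, when no excretion-free terminal component is reachable from $I$, one has $\phi_2=0$ and $\bar h_w=0$ for every $w\in V_1$ with an edge into $V_2$ (so that $J_{21}h_1(\bar x_1)+\phi_2=0$) is correct, and is in fact the detail the paper leaves implicit when it asserts that the equilibrium set is governed by $\mathcal{N}(J_2)$.

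The genuine gap is in how you handle $x_2$. You recast it as an asymptotically autonomous system and want $x_2$ to inherit the asymptotics of the limit system $\dot x_2=J_2h_2(x_2)+\psi$; as you yourself concede, this inference is invalid in general, and the repairs you announce are never carried out. Moreover, one of them cannot work as stated: in the convergent case the equilibrium set of the $V_2$-dynamics is a continuum (the one-parameter families $\alpha\tilde\pi_i$ on each terminal component), so no uniqueness in the sense of Proposition~\ref{prop:Maeda4-5} is available --- and that proposition in any case requires a path from every vertex to $X$, which is exactly what fails on $V_2$. The paper sidesteps the whole issue: the \emph{full} system (not the $x_2$-subsystem with time-varying input) satisfies Assumption~(B), so Proposition~\ref{prop:Maeda2-3} applies to it directly. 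If some excretion-free terminal component is reachable from $I$, the full system has no equilibrium (your citation of Proposition~\ref{prop:A_neccessary} is the right one here), hence its trajectories are unbounded; if none is reachable, then full-system equilibria are exactly $x_1=\bar x_1$ together with $h_2(x_2)\in\mathcal{N}(J_2)$ (this is where your $\psi=0$ computation enters), this set is nonempty (e.g.\ $x_2=0$), so trajectories are bounded and, by the second item of Proposition~\ref{prop:Maeda2-3}, approach this equilibrium set --- which, by Proposition~\ref{prop:Laplacian}, is precisely the set described in the statement, giving in particular the vanishing of the non-terminal entries. If you restructure your second half around applying Maeda's theorems to the full system in this way, the proof closes; your monotonicity of the $V_2$-mass remains a useful supplement for upgrading unboundedness to divergence in the first bullet.
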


\begin{proof}
Recall that $x_1$ is an equilibrium for $\dot x_1 = J_1 h_1(x_1) + \phi_1$ if and only if $J_1 h_1(x_1) = -\phi_1$.
By Proposition \ref{prop:invert-grounded-Lap}, $J_1$ is invertible and $-J_1^{-1}$ has positive entries; define $\bar h$ to be the unique solution to $J_1 h_1 = -\phi_1$. Notice that $\bar{h}$ has all positive entries.
If all entries of $\bar h$ are within the range of the corresponding function $H_v(x_v)$, then we have a unique equilibrium with positive entries $\bar x_1$ obtained as in the statement of the proposition, and hence by Proposition~\ref{prop:Maeda2-3} $x_1(t)$ converges to $\bar x$.
Otherwise, there exists no equilibrium, and hence, by Proposition~\ref{prop:Maeda2-3}, the mass of system grows indefinitely.

If there is a terminal component with no excretions but connected to some intakes, then by Proposition~\ref{prop:asymptotics-linear} there is no equilibrium, and hence by Proposition~\ref{prop:Maeda2-3} the mass of system grows indefinitely.
If all terminal components with no excretions are not connected to intakes, then the equilibrium set is obtained from the properties of the nullspace of $J_2$, given by Proposition~\ref{prop:Laplacian}. Moreover, Proposition~\ref{prop:Maeda2-3} ensures that trajectories remain bounded and approach the equilibrium set.
\end{proof}

\subsection{Zero-deficiency theory}
%%%%%%%%%%%% ZERO DEFICIENCY %%%%%%%%%%%
In this section, we shortly recall the zero-deficiency theory, and compare it with our results on equilibria of LIFE systems.
Zero-deficiency theory arises in the literature on chemical reaction networks, a seminal paper is \cite{Zero-Deficiency}. Our short overview is based on \cite{zero-def-notes}.

A free closed chemical reaction network with $m$ reactions between $p$ complexes involving $n$ species can be described by
$\dot x = S \Gamma R(x)$ where $S \in M_{n\times p}$, $\Gamma \in M_{p \times m}$ is the incidence matrix of the network, and $R(x)$ a column vector of size $m$.
The \emph{deficiency} of the chemical reaction network is the difference of the dimensions of the nullspaces of $S \Gamma$ and $\Gamma$:
$\delta  = \dim \ker (S \Gamma) - \dim \ker (\Gamma)$, which is equivalent to
$\delta = \rank (\Gamma) - \rank(S\Gamma) $ and to
$\delta = p - \ell - \rank (S\Gamma)$, where $\ell$ is the number of weakly connected components. The first equivalence is due to rank-nullity Theorem, and the second to the fact that $ \rank (\Gamma) = p - \ell$, since $\Gamma$ is an incidence matrix (Proposition 4.3 of \cite{Biggs}).
The reaction rate vector $R(x)$ is governed by the `mass-action kinetics': $R(x) = K \Psi(x)$, where $K \in M_{m \times p}$ and $\Psi(x)$ is a vector of size $p$,  defined as follows:
$K_{ej} = k_e$ if complex $j$ is the reactant complex of reaction $e$, $K_{ij}=0$ otherwise; $\Psi_i(x) = \prod_{j=1}^n  x_j^{s_{ji}}$.
Hence, the dynamics can be equivalently re-written as
$\dot x = S \Gamma K \Psi(x)$.
The following results hold:
\begin{proposition}[Zero-deficiency Theorem]
Consider a free closed chemical reaction network with mass-action kinetics.
If its deficiency is zero, then:
there exists an equilibrium with strictly positive entries if and only if the system is weakly reversible (i.e. each weakly connected component is also strongly connected).

Moreover, this strictly positive equilibrium is unique in each stoichiometric class (i.e. each weakly connected component has a space of equilibria which has dimension one, so that its equilibrium is unique up to a multiplicative constant representing the total mass in the component), and it is locally asymptotically stable.
\end{proposition}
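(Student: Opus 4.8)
The plan is to reduce the theorem to the theory of \emph{complex-balanced equilibria} and then exploit the Laplacian structure already developed in this paper, together with the classical Horn--Jackson entropy-like Lyapunov function. Throughout, write $A = \Gamma K \in M_{p\times p}$, so that the dynamics read $\dot x = S A\, \Psi(x)$, and note that $A$ has exactly the structure of a weighted graph Laplacian on the complexes: its off-diagonal entries record the reaction arrows between complexes and its columns sum to zero, so that (after transposition if needed) Proposition~\ref{prop:Laplacian} applies to it. The one extra identity I will use repeatedly is that the monomial map satisfies $\ln \Psi(x) = S^T \ln x$ (entrywise logarithm), which converts the nonlinear equilibrium condition into a linear-algebraic one.

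First I would settle existence. Call a strictly positive $x^\ast$ \emph{complex-balanced} if $A\,\Psi(x^\ast)=0$; any such point is automatically an equilibrium, since then $\dot x = S\cdot 0 = 0$. If the network is weakly reversible, every weakly connected component (linkage class) of the complex graph is strongly connected, hence terminal, so Proposition~\ref{prop:Laplacian} produces a strictly positive vector $\psi^\ast \in \mathcal N(A)$. It then remains to realize $\psi^\ast$ as $\Psi(x^\ast)$, i.e.\ to solve $S^T \ln x^\ast = \ln \psi^\ast$, which is possible exactly when $\ln \psi^\ast \in \operatorname{im}(S^T)$. \textbf{This is the step where the zero-deficiency hypothesis is essential}, and I expect it to be the main obstacle: the condition $\delta = \dim\ker(S\Gamma) - \dim\ker\Gamma = 0$ forces $\ker\Gamma = \ker(S\Gamma)$, and a dimension count comparing the logarithmic image of $\mathcal N(A)$ with $\operatorname{im}(S^T)$ is what guarantees that a positive kernel vector with logarithm in $\operatorname{im}(S^T)$ exists. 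This is the only place where the two matrices $\Gamma$ and $S$ must be used jointly rather than separately.

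For stability and for the converse direction I would introduce the pseudo-Helmholtz function
\begin{equation*}
V(x) = \sum_i \bigl( x_i \ln \tfrac{x_i}{x_i^\ast} - x_i + x_i^\ast \bigr),
\end{equation*}
which is strictly convex and attains its unique minimum at $x^\ast$. Using $\ln\Psi(x)=S^T\ln x$ together with $\nabla V = \ln x - \ln x^\ast$ one computes
\begin{equation*}
\dot V = \bigl( \ln \Psi(x) - \ln \psi^\ast \bigr)^T A\, \Psi(x),
\end{equation*}
and the Horn--Jackson estimate — an edge-by-edge bound on the Laplacian $A$ that combines the complex-balancing identity $A\psi^\ast=0$ with the elementary convexity inequality $a - b \le a\ln(a/b)$ for $a,b>0$ — yields $\dot V \le 0$, with equality precisely at complex-balanced equilibria. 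Hence $V$ is a strict Lyapunov function relative to each stoichiometric class, giving local asymptotic stability. The converse implication, that a strictly positive equilibrium forces weak reversibility, follows because under deficiency zero every positive equilibrium is in fact complex-balanced, and $A\psi = 0$ for a strictly positive $\psi$ forces each connected component of the complex graph to be strongly connected, i.e.\ weakly reversible.

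Finally, uniqueness within a stoichiometric class is the geometric shadow of the strict convexity of $V$. The compatibility class through $x^\ast$ is the convex set $(x^\ast + \operatorname{im}(S\Gamma)) \cap (\R_+)^n$, and at any positive equilibrium $\bar x$ the gradient $\ln \bar x - \ln x^\ast$ must be orthogonal to the stoichiometric subspace $\operatorname{im}(S\Gamma)$; since $V$ is strictly convex, it has at most one such critical point on the class, namely its minimizer $x^\ast$. This is essentially Birch's theorem, and combined with the fact (from Proposition~\ref{prop:Laplacian}) that the nullspace contribution of each strongly connected component is one-dimensional, it delivers the claimed uniqueness up to the total mass in each component.
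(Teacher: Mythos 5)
First, a point of comparison: the paper does not prove this proposition at all --- it is recalled from the chemical reaction network literature (the deficiency-zero theorem of \cite{Zero-Deficiency}, following the presentation in \cite{zero-def-notes}), and the surrounding section only compares its hypotheses with the LIFE results. So your proposal cannot be judged against a paper proof; it must stand on its own. Structurally it does: the route you outline --- reduce to complex-balanced equilibria, extract a positive kernel vector of $A=\Gamma K$ from the Laplacian structure (Proposition~\ref{prop:Laplacian} applies, since $A$ is Metzler with zero column sums), prove stability with the pseudo-Helmholtz function, and get uniqueness from strict convexity on each stoichiometric class --- is exactly the classical Horn--Jackson argument. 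Your converse direction is also correctly reduced to the observation that zero deficiency forces every positive equilibrium to be complex balanced: at an equilibrium, $\Gamma K \Psi(x) \in \ker S \cap \operatorname{im}\Gamma$, and since $\ker \Gamma \subseteq \ker(S\Gamma)$ with equality precisely when $\delta = 0$, zero deficiency is equivalent to $\ker S \cap \operatorname{im}\Gamma = \{0\}$.

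The genuine gap is exactly where you flagged it: the existence direction. You assert that ``a dimension count'' produces some positive $\psi \in \mathcal N(A)$ with $\ln \psi \in \operatorname{im}(S^T)$, but you never perform it, and this is the one step that carries the whole theorem --- everything else is linear algebra or the standard Lyapunov computation. To close it: by weak reversibility and Proposition~\ref{prop:Laplacian}, the positive part of $\mathcal N(A)$ consists of the vectors $\psi = \sum_{j=1}^{\ell} \alpha_j \pi_j$ with $\alpha_j > 0$, where $\pi_j$ is strictly positive on the $j$-th linkage class $L_j$ and zero elsewhere, and the supports $L_1, \dots, L_\ell$ cover all complexes. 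Disjointness of supports gives $\ln \psi = v_0 + \sum_j (\ln \alpha_j)\,\chi_{L_j}$ with $v_0$ fixed and $\chi_{L_j}$ the indicator vector of $L_j$; hence the set of logarithms of positive kernel vectors is the affine subspace $v_0 + \operatorname{span}\{\chi_{L_1},\dots,\chi_{L_\ell}\}$, and $\operatorname{span}\{\chi_{L_1},\dots,\chi_{L_\ell}\} = \ker(\Gamma^T)$. Now take orthogonal complements in the identity above: $\ker S \cap \operatorname{im}\Gamma = \{0\}$ is equivalent to $\operatorname{im}(S^T) + \ker(\Gamma^T) = \R^p$. Therefore $v_0 = s + \sum_j c_j \chi_{L_j}$ for some $s \in \operatorname{im}(S^T)$, and choosing $\ln \alpha_j = -c_j$ places $\ln\psi = s$ in $\operatorname{im}(S^T)$; solving $S^T \ln x^\ast = \ln \psi$ then yields the strictly positive complex-balanced equilibrium $x^\ast$. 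With this paragraph inserted your outline becomes a complete proof; without it, the proposal establishes the converse, stability, and uniqueness, but not existence, which is precisely where the deficiency-zero hypothesis does its work.
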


Notice the close resemblance with the results for linear LIFE systems in the case with no intakes nor excretions, described in Propositions~\ref{prop:Laplacian} and~\ref{prop:asymp-linear-closed}.
It turns out that a particular class of closed free chemical reaction networks with mass-action kinetics and zero deficiency exactly coincides with linear LIFE systems with no intakes nor excretions. Indeed, let $p=n$, let $S$ be the identity matrix of size $n$, let $K$ be defined by $K_{ej} = f_e$ if edge $e$ is of the form $e=(v_j, w)$ and $K_{ej}=0$ otherwise; moreover choose exponents $s_{ji}$ to be zero and ones so that $\Psi(x) = x$. This gives exactly a linear LIFE system with no intakes nor excretions. It is immediate to see that the deficiency is zero, since $S = I$ implies $\rank(S\Gamma) = \rank(\Gamma)$.

In the more general case, free closed chemical reaction networks with mass-action kinetics and zero deficiency are a class of LIFE systems with different assumptions than those considered in this paper, e.g., they might not even satisfy Assumption~(A), due to the presence of a rectangular matrix $S$ left-multiplying $\Gamma$, and often do not satisfy Assumption~(B), due to the dependence of entries of $\Psi(x)$ on various entries of $x$.

\section{Conclusion} \label{sec:conclusion}
For general LIFE systems (Assumption (A)), we show the existence of positive solutions.  Stability criteria have been shown for the structure of a graph associated to LIFE systems.  We show that further conclusions cannot be drawn in the general case for LIFE systems because arbitrary dynamics may be defined.

For the problem of understanding equilibria for a fix set of metabolites, we show that the rank of the stoichiometric matrix (and the nullspace of this matrix) are determined by structural properties of the graph associated to the system, discuss the effect of the associated graph having intake vertices and excretion vertices on the rank of the stoichiometric matrix, and give necessary conditions on the structure of the graph associated to LIFE systems for the existence of equilibria.  More biologically relevant equilibria of LIFE systems are those with all positive metabolites and fluxes and we prove that the extreme pathways method for finding a positive basis describes all such equilibria.

The field of network flows contributes to LIFE systems by way of the min cut max flow theorem.  We show the capacity of edges in network flows relates to saturation of functions corresponding to edges of LIFE systems.  The method of extreme pathways to calculate a positive basis is proven to include the entire intersection of the nullspace with the positive orthant.  This basis is essential to describing equilibria of LIFE systems.

Equilibria and asymptotic behavior of metabolites for fixed fluxes are studied.  We show that under stricter assumptions (Assumption (C)) we determine the eigenvalues of the jacobian of the LIFE system as well as structure of the graph which admit certain equilibria.  We analyze the case with intakes and excretions versus the cases without intakes or excretions.

Equilibria for LIFE systems with terminal components exist, but these equilibria are dependent upon initial mass of the system.  Furthermore, conditions are given for LIFE systems to tend to equilibrium from non-negative initial conditions.  LIFE systems under Assumption (B) have bounded solutions if and only if there exits a non-negative equilibrium.  Then for Assumption (B), we give conditions on the structure of the associated graph for which nontrivial equilibria exist.

We show that LIFE systems with no intakes or excretions have ``zero deficiency.''  The rank of the stoichiometric matrix (defined in this work) gives information about the structure of the associated graph, specifically the connectivity of the graph and the existence of strongly connected components.  Zero deficiency theory also provides information about the existence of equilibria with respect to this structure.

The structural conditions of graphs discussed in this work have implications on metabolic networks.  We have included results about networks which are considered non-biological for the sake of completeness.  These results allow a clear picture of the structure of metabolic systems which are capable of admitting a biologically relevant equilibrium.  With this clear picture, we are able to determine metabolic networks for which it is most advantageous to analyze using Linear-In-Flux-Expressions.
\section*{Appendix}
This Appendix contains the calculations for examples \ref{ex:extremes} and\ref{ex:compare}. Example \ref{ex:extremes} illustrates finding the extreme pathways of RCT using the algorithm described in Section \ref{sec:algorithm}. Example \ref{ex:compare} compares a standard basis and positive basis for RCT and shows that when only the positive orthant is considered they describe the same space.

\vspace*{4pt}\noindent\textbf{Example \ref{ex:extremes} continued.}
 As shown previously, the stoichiometric matrix for the RCT is given by:
\begin{equation*}\label{eq:stoichiometric_matrix}
\scalemath{.93}{
S(x)=\left(\begin{array}{cccccccccc}
 1 & 0 & 0 & -x1 & 0 &0 &0&0&0& 0\\
  0 & 1 & 0 & 0 & -x2 &0 &0&0&0&0\\
 0 & 0 & 1 & 0 & 0 &-x3 &0&0&0&0\\
 0 & 0 & 0 & x1 & x2 &x3 &-x4&-x4&0&0\\
 0 & 0 & 0 & 0 & 0 &0 &x4&0&-x5&0\\
 0 & 0 & 0 & 0 & 0 &0 &0&x4&x5&-x6\\
\end{array}\right)
}
\end{equation*}

First let $V_0$ be the $m\times m$ identity matrix where $m$ is the number of columns in $S$. Then define $C_0$ as follows, $V_0 \cdot S^T = C_0$. To better illustrate the row combinations in the algorithm a column is appended to $C_0$ which labels the rows.

We have
\begingroup
\renewcommand*{\arraystretch}{1}
\begin{equation*}
\scalemath{.86}{
\left(\begin{array}{cccccccccc}
 1& 0 &0  &0  &0  &0 &0&0&0&0   \\
  0& 1&  0&  0&  0& 0&0&0&0&0   \\
  0& 0 & 1&0  &0  &0 &0&0&0&0   \\
  0&  0&  0& 1 &  0& 0&0&0&0&0  \\
  0&  0&  0&  0& 1 & 0&0&0&0&0  \\
  0&  0&  0&  0&  0&1 &0&0&0&0  \\
   0&  0&  0&  0&  0& 0&1&0&0&0 \\
  0&  0&  0&  0&  0& 0&0&1&0&0  \\
  0&  0&  0&  0&  0& 0&0&0&1&0  \\
  0&  0&  0&  0&  0& 0&0&0&0&1   \\
\end{array}\right)\cdot S^T = \left(\begin{array}{ccccccc}
1  &0   &0   &0   &0   & 0&R1\\
0  &1   &0   &0   &0   & 0&R2\\
0  &0   &1   &0   &0   & 0&R3\\
-x_1&0   &0   &x_1 &0   & 0&R4\\
0   &-x_2&0   &x_2 &0   & 0&R5\\
0   &0   &-x_3&x_3 &0   & 0&R6\\
0   &0   &0   &-x_4&x_4 & 0&R7\\
0   &0   &0   &-x_4&0   & x_4&R8\\
0   &0   &0   &0   &-x_5& x_5&R9\\
0  &0   &0   &0   &0   & -x_6&R10\\
\end{array}\right)
}
\end{equation*}
\endgroup

The next step is to identify any columns of $C_0$ that have no sources or sinks. For this example there are two columns, column $4$ and column $5$. Then we modify both $V_0$ and $C_0$ by first copying all the rows with a zero in the entry of the first identified column (column $4$), then of the remaining rows add all possible positive combinations of two rows which produce a zero in this column. Note that $V_0$ is $10\times 10$ and $C_0$ is $10\times 6$.

This gives us
\begingroup
\renewcommand*{\arraystretch}{1}
\begin{equation*}
\scalemath{.80}{
\left(\begin{array}{cccccccccc}
 1&0  &0  &0  &0  &0 &0&0&0&0   \\
  0& 1&0  &0  &0  &0 &0&0&0&0   \\
  0&  0& 1&0  &0  &0 &0&0&0&0   \\
  0&  0&  0& \frac{x_4}{x_1} & 0 & 0&1&0&0&0  \\
  0&  0&  0& \frac{x_4}{x_1} & 0 & 0&0&1&0&0  \\
  0&  0&  0&  0& \frac{x_4}{x_2} & 0&1&0&0&0  \\
  0&  0&  0&  0& \frac{x_4}{x_2} & 0&0&1&0&0  \\
  0&  0&  0&  0&  0&\frac{x_4}{x_3} &1&0&0&0  \\
  0&  0&  0&  0&  0&\frac{x_4}{x_3} &0&1&0&0  \\
  0&  0&  0&  0&  0& 0&0&0&1&0                 \\
  0&  0&  0&  0&  0& 0&0&0&0&1   \\
\end{array}\right)\cdot S^T = \left(\begin{array}{ccccccc}
1  &0   &0   &0   &0   & 0&R1\\
0  &1   &0   &0   &0   & 0&R2\\
0  &0   &1   &0   &0   & 0&R3\\
-x_4&0   &0   &0   &x_4 & 0&\frac{x_4}{x_1}R4 + R7\\
-x_4&0   &0   &0   &0   & x_4&\frac{x_4}{x_1}R4 + R8\\
0  &-x_4&0   &0   &x_4 & 0&\frac{x_4}{x_2}R5 + R7\\
0  &-x_4&0   &0   &0   & x_4&\frac{x_4}{x_2}R5 + R8\\
0  &0   &-x_4&0   &x_4 & 0&\frac{x_4}{x_3}R6 + R7\\
0  &0   &-x_4&0   &0   & x_4&\frac{x_4}{x_3}R6 + R8\\
0  &0   &0   &0   &-x_5& x_5&R9\\
0  &0   &0   &0   &0   & -x_6&R10\\
\end{array}\right)
}
\end{equation*}
\endgroup
Note that $V_1$ is $11\times 10$ and $C_1$ is $11\times 6$.

Now doing the same for column $5$ gives
\begingroup
\renewcommand*{\arraystretch}{1}
\begin{equation*}
\scalemath{.75}{
\left(\begin{array}{cccccccccc}
1&0  &0  &0  &0  &0 &0&0&0&0   \\
  0& 1&0  &0  &0  &0 &0&0&0&0   \\
  0&  0& 1&0  &0  &0 &0&0&0&0   \\
  0&  0&  0& \frac{x_4}{x_1} &  0&0 &1&0&\frac{x_4}{x_5}& 0\\
  0&  0&  0& \frac{x_4}{x_1} &  0&0 &0&1&0&0  \\
  0&  0&  0&  0& \frac{x_4}{x_2} & 0&1&0&\frac{x_4}{x_5}&0  \\
  0&  0&  0&  0& \frac{x_4}{x_2} &0 &0&1&0&0\\
  0&  0&  0&  0&  0&\frac{x_4}{x_3} &1&0&\frac{x_4}{x_5}&0  \\
  0&  0&  0&  0&  0&\frac{x_4}{x_3} &0&1&0&0  \\
  0&  0&  0&  0&  0& 0&0&0&0&1   \\
\end{array}\right)\cdot S^T = \left(\begin{array}{ccccccc}
1  &0   &0   &0   &0   & 0&R1\\
0  &1   &0   &0   &0   & 0&R2\\
0  &0   &1   &0   &0   & 0&R3\\
-x_4&0   &0   &0   &0   & x_4&\frac{x_4}{x_1}R4 + R7 +\frac{x_4}{x_5}R9\\
-x_4&0   &0   &0   &0   & x_4&\frac{x_4}{x_1}R4 + R8\\
0  &-x_4&0   &0   &0   & x_4&\frac{x_4}{x_2}R5 + R7 +\frac{x_4}{x_5}R9\\
0  &-x_4&0   &0   &0   & x_4&\frac{x_4}{x_2}R5 + R8\\
0  &0   &-x_4&0   &0   & x_4&\frac{x_4}{x_3}R6 + R7 +\frac{x_4}{x_5}R9\\
0  &0   &-x_4&0   &0   & x_4&\frac{x_4}{x_3}R6 + R8\\
0  &0   &0   &0   &0   & -x_6&R10\\
\end{array}\right)
}
\end{equation*}
\endgroup
Note that $V_2$ is $10\times 10$ and $C_2$ is $10\times 6$.

The same process is then followed for columns containing sources and sinks. Starting at the rightmost column add all possible positive combinations of two rows that create a $0$ entry in this column. Do the same operations to both $V_2$ and $C_2$. Doing this for just the rightmost column we have

\ \vspace*{-10pt}
\begingroup
\renewcommand*{\arraystretch}{1}
\begin{equation*}
\scalemath{.68}{
\left(\begin{array}{cccccccccc}
 1&0  &0  &0  &0  &0 &0&0&0&0   \\[0.5mm]
  0& 1&0  &0  &0  &0 &0&0&0&0   \\[0.5mm]
  0&  0& 1&0  &0  &0 &0&0&0&0   \\[0.5mm]
  0&  0&  0& \frac{x_4}{x_1} &  0& 0&1&0&\frac{x_4}{x_5}&\frac{x_4}{x_6}  \\[0.5mm]
  0&  0&  0& \frac{x_4}{x_1} &  0& 0&0&1&0&\frac{x_4}{x_6}  \\[0.5mm]
  0&  0&  0&  0& \frac{x_4}{x_2} & 0&1&0&\frac{x_4}{x_5}&\frac{x_4}{x_6}  \\[0.5mm]
  0&  0&  0&  0& \frac{x_4}{x_2} & 0&0&1&0&\frac{x_4}{x_6}  \\[0.5mm]
  0&  0&  0&  0&  0&\frac{x_4}{x_3} &1&0&\frac{x_4}{x_5}&\frac{x_4}{x_6}  \\[0.5mm]
  0&  0&  0&  0&  0&\frac{x_4}{x_3} &0&1&0&\frac{x_4}{x_6}  \\[0.5mm]
\end{array}\right)\cdot S^T = \left(\begin{array}{ccccccc}
1  &0   &0   &0   &0   & 0&R1\\[0.5mm]
0  &1   &0   &0   &0   & 0&R2\\[0.5mm]
0  &0   &1   &0   &0   & 0&R3\\[0.5mm]
-x_4&0   &0   &0   &0   & 0&\frac{x_4}{x_1}R4 + R7 +\frac{x_4}{x_5}R9+\frac{x_4}{x_6}R10\\[0.5mm]
-x_4&0   &0   &0   &0   & 0&\frac{x_4}{x_1}R4 + R8+\frac{x_4}{x_6}R10\\[0.5mm]
0  &-x_4&0   &0   &0   & 0&\frac{x_4}{x_2}R5 + R7 +\frac{x_4}{x_5}R9+\frac{x_4}{x_6}R10\\[0.5mm]
0  &-x_4&0   &0   &0   & 0&\frac{x_4}{x_2}R5 + R8+\frac{x_4}{x_6}R10\\[0.5mm]
0  &0   &-x_4&0   &0   & 0&\frac{x_4}{x_3}R6 + R7 +\frac{x_4}{x_5}R9+\frac{x_4}{x_6}R10\\[0.5mm]
0  &0   &-x_4&0   &0   & 0&\frac{x_4}{x_3}R6 + R8+\frac{x_4}{x_6}R10\\
\end{array}\right)
}
\end{equation*}
\endgroup
$V_3$ is $9 \times 10$ and $C_3$ is $9 \times 6$. After the third column we have the following,
\begingroup
\renewcommand*{\arraystretch}{1}
\begin{equation*}
\scalemath{.66}{
\left(\begin{array}{cccccccccc}
 1&0  &0  &0  &0  &0 &0&0&0&0   \\[0.5mm]
  0& 1&0  &0  &0  &0 &0&0&0&0   \\[0.5mm]
 0 &  0& 0 & \frac{x_4}{x_1} & 0 & 0&1&0&\frac{x_4}{x_5}&\frac{x_4}{x_6}  \\[0.5mm]
 0 & 0 & 0 & \frac{x_4}{x_1} &  0& 0&0&1&0&\frac{x_4}{x_6}  \\[0.5mm]
 0 & 0 &  0& 0 & \frac{x_4}{x_2} & 0&1&0&\frac{x_4}{x_5}&\frac{x_4}{x_6}  \\[0.5mm]
 0 &  0& 0 & 0 & \frac{x_4}{x_2} & 0&0&1&0&\frac{x_4}{x_6}  \\[0.5mm]
  0&  0&x_4  & 0 & 0 &\frac{x_4}{x_3} &1&0&\frac{x_4}{x_5}&\frac{x_4}{x_6}  \\[0.5mm]
  0&  0&x_4  &  0& 0 &\frac{x_4}{x_3} &0&1&0&\frac{x_4}{x_6}  \\[0.5mm]
\end{array}\right)\cdot S^T = \left(\begin{array}{ccccccc}
1  &0   &0   &0   &0   & 0&R1\\[0.5mm]
0  &1   &0   &0   &0   & 0&R2\\[0.5mm]
-x_4&0   &0   &0   &0   & 0&\frac{x_4}{x_1}R4 + R7 +\frac{x_4}{x_5}R9+\frac{x_4}{x_6}R10\\[0.5mm]
-x_4&0   &0   &0   &0   & 0&\frac{x_4}{x_1}R4 + R8+\frac{x_4}{x_6}R10\\[0.5mm]
0  &-x_4&0   &0   &0   & 0&\frac{x_4}{x_2}R5 + R7 +\frac{x_4}{x_5}R9+\frac{x_4}{x_6}R10\\[0.5mm]
0  &-x_4&0   &0   &0   & 0&\frac{x_4}{x_2}R5 + R8+\frac{x_4}{x_6}R10\\[0.5mm]
0  &0   &0&0   &0   & 0&x_4R3+\frac{x_4}{x_3}R6 + R7 +\frac{x_4}{x_5}R9+\frac{x_4}{x_6}R10\\[0.5mm]
0  &0   &0&0   &0   & 0&x_4R3+\frac{x_4}{x_3}R6 + R8+\frac{x_4}{x_6}R10\\
\end{array}\right)
}
\end{equation*}
\endgroup
$V_4$ is $8 \times 10$ and $C_4$ is $8 \times 6$.  After the second column we have the following,
\begingroup
\renewcommand*{\arraystretch}{1}
\begin{equation*}
\scalemath{.68}{
\left(\begin{array}{cccccccccc}
1&0  &0  &0  &0  &0 &0&0&0&0   \\[0.5mm]
 0 &  0& 0 & \frac{x_4}{x_1} & 0 & 0&1&0&\frac{x_4}{x_5}&\frac{x_4}{x_6}  \\[0.5mm]
 0 & 0 &  0& \frac{x_4}{x_1} & 0 & 0&0&1&0&\frac{x_4}{x_6}  \\[0.5mm]
 0 &x_4  &  0&  0& \frac{x_4}{x_2} & 0&1&0&\frac{x_4}{x_5}&\frac{x_4}{x_6}  \\[0.5mm]
 0 &x_4  &  0& 0 & \frac{x_4}{x_2} & 0&0&1&0&\frac{x_4}{x_6}  \\[0.5mm]
  0& 0 &x_4  &  0&  0&\frac{x_4}{x_3} &1&0&\frac{x_4}{x_5}&\frac{x_4}{x_6}  \\[0.5mm]
  0& 0 &x_4  &  0&  0&\frac{x_4}{x_3} &0&1&0&\frac{x_4}{x_6}  \\[0.5mm]
\end{array}\right)\cdot S^T = \left(\begin{array}{ccccccc}
1  &0   &0   &0   &0   & 0&R1\\
-x_4&0   &0   &0   &0   & 0&\frac{x_4}{x_1}R4 + R7 +\frac{x_4}{x_5}R9+\frac{x_4}{x_6}R10\\[0.5mm]
-x_4&0   &0   &0   &0   & 0&\frac{x_4}{x_1}R4 + R8+\frac{x_4}{x_6}R10\\[0.5mm]
0  &0&0   &0   &0   & 0&x_4R2+\frac{x_4}{x_2}R5 + R7 +\frac{x_4}{x_5}R9+\frac{x_4}{x_6}R10\\[0.5mm]
0  &0&0   &0   &0   & 0&x_4R2+\frac{x_4}{x_2}R5 + R8+\frac{x_4}{x_6}R10\\[0.5mm]
0  &0   &0&0   &0   & 0&x_4R3+\frac{x_4}{x_3}R6 + R7 +\frac{x_4}{x_5}R9+\frac{x_4}{x_6}R10\\[0.5mm]
0  &0   &0&0   &0   & 0&x_4R3+\frac{x_4}{x_3}R6 + R8+\frac{x_4}{x_6}R10\\

\end{array}\right)
}
\end{equation*}
\endgroup
$V_5$ is $7 \times 10$ and $C_5$ is $7\times 6$. And finally,
\begingroup
\renewcommand*{\arraystretch}{1}
\begin{equation*}
\scalemath{.69}{
\left(\begin{array}{cccccccccc}
 x_4 & 0 &  0& \frac{x_4}{x_1} & 0 & 0&1&0&\frac{x_4}{x_5}&\frac{x_4}{x_6}  \\[1mm]
 x_4 &  0& 0 & \frac{x_4}{x_1} &  0& 0&0&1&0&\frac{x_4}{x_6}  \\[1mm]
 0 &x_4  & 0 &  0& \frac{x_4}{x_2} & 0&1&0&\frac{x_4}{x_5}&\frac{x_4}{x_6}  \\[1mm]
 0 &x_4  & 0 & 0 & \frac{x_4}{x_2} & 0&0&1&0&\frac{x_4}{x_6}  \\[1mm]
 0 &  0&x_4  & 0 & 0 &\frac{x_4}{x_3} &1&0&\frac{x_4}{x_5}&\frac{x_4}{x_6}  \\[1mm]
  0&  0&x_4  & 0 & 0 &\frac{x_4}{x_3} &0&1&0&\frac{x_4}{x_6}  \\[1mm]
\end{array}\right)\cdot S^T = \left(\begin{array}{ccccccc}
0&0   &0   &0   &0   & 0&x_4R1+\frac{x_4}{x_1}R4 + R7 +\frac{x_4}{x_5}R9+\frac{x_4}{x_6}R10\\[1mm]
0&0   &0   &0   &0   & 0&x_4R1+\frac{x_4}{x_1}R4 + R8+\frac{x_4}{x_6}R10\\[1mm]
0  &0&0   &0   &0   & 0&x_4R2+\frac{x_4}{x_2}R5 + R7 +\frac{x_4}{x_5}R9+\frac{x_4}{x_6}R10\\[1mm]
0  &0&0   &0   &0   & 0&x_4R2+\frac{x_4}{x_2}R5 + R8+\frac{x_4}{x_6}R10\\[1mm]
0  &0   &0&0   &0   & 0&x_4R3+\frac{x_4}{x_3}R6 + R7 +\frac{x_4}{x_5}R9+\frac{x_4}{x_6}R10\\[1mm]
0  &0   &0&0   &0   & 0&x_4R3+\frac{x_4}{x_3}R6 + R8+\frac{x_4}{x_6}R10\\
\end{array}\right)
}
\end{equation*}
\endgroup
$V_6$ is $6\times 10$ and $C_6$ is $6 \times 6$. The rows of $V_6$ are the basis for $\mathcal{N}(S(x))$.  While the $\mathrm{dim}\mathcal{N}(S(x))=4$, there are $6$ extreme pathway vectors.  These vectors are positive, and positively linearly independent,
\begingroup
\renewcommand*{\arraystretch}{1}
\begin{equation*}
\scalemath{.88}{
V_6=\left(\begin{array}{cccccccccc}
 x_4 &0  &0  & \frac{x_4}{x_1} &0  &0 &1&0&\frac{x_4}{x_5}&\frac{x_4}{x_6}\\[0.5mm]
 x_4 &0  &0  & \frac{x_4}{x_1} &  0& 0&0&1&0&\frac{x_4}{x_6}  \\[0.5mm]
0  &x_4  &0  &0  & \frac{x_4}{x_2} &0 &1&0&\frac{x_4}{x_5}&\frac{x_4}{x_6} \\[0.5mm]
0  &x_4  &0  &0  & \frac{x_4}{x_2} &0 &0&1&0&\frac{x_4}{x_6} \\[0.5mm]
0  &0  &x_4  &0  &0  &\frac{x_4}{x_3} &1&0&\frac{x_4}{x_5}&\frac{x_4}{x_6}\\[0.5mm]
0  &0  &x_4  &0  &0  &\frac{x_4}{x_3} &0&1&0&\frac{x_4}{x_6} \\
\end{array}\right)}.
\end{equation*}
\endgroup

\vspace*{4pt}\noindent\textbf{Example \ref{ex:compare} continued.}
The positive basis of $S$ was found in example \ref{ex:extremes}. This basis has six vectors despite the four dimensional nullspace. Also note that the basis vectors are not linearly independent, but positively linearly independent. The span of the positive basis vectors is shown below,
\begin{equation}\label{convex}
b_1
\begin{pmatrix}
x_6  \\
0  \\
0 \\
\frac{x_6}{x_1} \\
0 \\
0 \\
0 \\
\frac{x_6}{x_4} \\
0 \\
1 \\
\end{pmatrix}
+
b_2
\begin{pmatrix}
x_6  \\
0  \\
0 \\
\frac{x_6}{x_1} \\
0 \\
0 \\
\frac{x_6}{x_4} \\
0 \\
\frac{x_6}{x_5} \\
1 \\
\end{pmatrix}
+
b_3
\begin{pmatrix}
0  \\
x_6  \\
0 \\
0 \\
\frac{x_6}{x_2} \\
0 \\
0 \\
\frac{x_6}{x_4} \\
0 \\
1 \\
\end{pmatrix}
+
b_4
\begin{pmatrix}
0  \\
x_6  \\
0 \\
0 \\
\frac{x_6}{x_2} \\
0 \\
\frac{x_6}{x_4} \\
0 \\
\frac{x_6}{x_5} \\
1 \\
\end{pmatrix}
+
b_5
\begin{pmatrix}
0  \\
0  \\
x_6 \\
0 \\
0 \\
\frac{x_6}{x_3} \\
0 \\
\frac{x_6}{x_4} \\
0 \\
1 \\
\end{pmatrix}
+
b_6
\begin{pmatrix}
0  \\
0  \\
x_6 \\
0 \\
0 \\
\frac{x_6}{x_3} \\
\frac{x_6}{x_4} \\
0 \\
\frac{x_6}{x_5} \\
1 \\
\end{pmatrix}.
\end{equation}

Next the positive basis vectors for the nullspace must be intersected with the positive orthant $\mathcal{N}(S(x)) \cap (\mathbb{R}_+)^{m}$.
When this span is intersected with the positive orthant it is clear there is only one condition,
\begin{equation}\label{bs}
    \text{For all i, } b_i \geq 0 \qquad b_i\in\mathbb{R}.
\end{equation}

We refer to this span intersected with the positive orthant as $B$.

For the stoichiometric matrix $S(x)$ the basis for the null space was found. The span of the four basis vectors is shown below,
\begin{equation}\label{standard}
a_1
\begin{pmatrix}
x_6  \\
0  \\
0 \\
\frac{x_6}{x_1} \\
0 \\
0 \\
0 \\
\frac{x_6}{x_4} \\
0 \\
1 \\
\end{pmatrix}
+
a_2
\begin{pmatrix}
0  \\
0  \\
0 \\
0 \\
0 \\
0 \\
\frac{x_5}{x_4} \\
-\frac{x_5}{x_4} \\
1 \\
0 \\
\end{pmatrix}
+
a_3
\begin{pmatrix}
-x_3  \\
0  \\
x_3 \\
-\frac{x_3}{x_1} \\
0 \\
1 \\
0 \\
0 \\
0 \\
0 \\
\end{pmatrix}
+
a_4
\begin{pmatrix}
-x_2  \\
x_2  \\
0 \\
-\frac{x_2}{x_1} \\
1 \\
0 \\
0 \\
0 \\
0 \\
0 \\
\end{pmatrix}.
\end{equation}

To find the intersection of this span with the positive orthant, three conditions on the $a_i$ must hold.
\begin{equation}\label{as}
\begin{cases}
     a_i\geq 0, \text{ \hspace{2cm} for } i \in \{1,2,3,4\}\\
     a_1x_6 \geq a_3x_3 +a_4x_2,\\
     a_1x_6 \geq a_2x_5. \\
\end{cases}
\end{equation}

We refer to the intersection of this span with the positive orthant as $C$. We show that under the conditions given, $B = C$.

First it is shown that $B\subset C$ by showing that for an arbitrary set of $b_i \geq 0$, $a_i$'s can be chosen to reach the same vector.  Using the following substitutions for $a_i$ it is clear that if $b_i \geq 0$ the inequalities of \eqref{as} are satisfied
\begin{equation}\label{asubs}
    \begin{cases}
    a_1 = b_1+b_2+b_3+b_4+b_5+b_6,\\
    a_2 = (b_2+b_4+b_6)\frac{x_6}{x_5},\\
    a_3 = (b_5+b_6)\frac{x_6}{x_3},\\
    a_4 = (b_3+b_4) \frac{x_6}{x_2}.\\
    \end{cases}
\end{equation}

This shows that any vector in $B$ can be represented by vectors in $C$ i.e. $B\subset C$.

Now it is shown that $C\subset B$. For arbitrary $a$'s which satisfy \eqref{as}, the following substitutions for $b$ are used and the conditions of \eqref{bs} are checked.
\begin{equation}\label{bsubs}
    \begin{cases}
    b_1 = a_1 - a_3\frac{x_3}{x_6} - a_4\frac{x_2}{x_6} - a_2\frac{x_5}{x_6}+ b_4 +b_6,\\
    b_2 = a_2\frac{x_5}{x_6} - b_4 - b_6,\\
    b_3 = a_4\frac{x_2}{x_6} - b_4,\\
    b_5 = a_3\frac{x_3}{x_6} - b_6.\\
    \end{cases}
\end{equation}

To insure that \eqref{bs} is satisfied the following inequalities must hold,
\begin{align}
a_1 + b_4 +b_6&\geq a_3\frac{x_3}{x_6} + a_4\frac{x_2}{x_6} + a_2\frac{x_5}{x_6} \label{ineq1}, \\
a_2\frac{x_5}{x_6} &\geq b_4 + b_6  \label{ineq2}, \\
a_4\frac{x_2}{x_6} &\geq b_4  \label{ineq3}, \\
a_3\frac{x_3}{x_6} &\geq b_6 \label{ineq4}.
\end{align}

If choices for $b_4$ and $b_6$ can be found which satisfy these inequalities then we will have $C\subset B$.

We have the following two conditions:

\vspace*{4pt}\noindent\textbf{Condition 1.}
$a_4\frac{x_2}{x_6} + a_3\frac{x_3}{x_6} \leq  a_2\frac{x_5}{x_6}$.
Setting $b_4 = a_4\frac{x_2}{x_6}$ and $b_6 = a_3\frac{x_3}{x_6}$ immediately satisfies \eqref{ineq2}, \eqref{ineq3}, and \eqref{ineq4}. From \eqref{as} we have $a_1x_6 \geq a_2x_5$ which means that inequality \eqref{ineq1} is satisfied.

\vspace*{4pt}\noindent\textbf{Condition 2.}
$a_4\frac{x_2}{x_6} + a_3\frac{x_3}{x_6} > a_2\frac{x_5}{x_6}$.

Under Condition 2 three cases must be considered.

\vspace*{4pt}\noindent\textbf{Case 1.} $a_4\frac{x_2}{x_6} < a_2\frac{x_5}{x_6}$.  In this case we set $b_4 = a_4\frac{x_2}{x_6}$ and $b_6 = a_2\frac{x_5}{x_6} - a_4\frac{x_2}{x_6}$. This immediately satisfies \eqref{ineq2} and \eqref{ineq3}. Since $a_4\frac{x_2}{x_6} + a_3\frac{x_3}{x_6} > a_2\frac{x_5}{x_6}$  \eqref{ineq4} is satisfied as well. Then from \eqref{as} we have $a_1x_6 \geq a_3x_3 + a_4x_2$, which means that \eqref{ineq1} is satisfied.

\vspace*{4pt}\noindent\textbf{Case 2.} $a_4\frac{x_2}{x_6} > a_2\frac{x_5}{x_6}$ and $a_3\frac{x_3}{x_6} < a_2\frac{x_5}{x_6}$. In this case we set $b_6 = a_3\frac{x_3}{x_6}$ and $b_4 = a_2\frac{x_5}{x_6} - a_3\frac{x_3}{x_6}$, this satisfies \eqref{ineq2}, \eqref{ineq3}, \eqref{ineq4}. And from \eqref{as} we have $a_1x_6 \geq a_3x_3 + a_4x_2$, which means that \eqref{ineq1} is also satisfied.

\vspace*{4pt}\noindent\textbf{Case 3.} $a_4\frac{x_2}{x_6} > a_2\frac{x_5}{x_6}$ and $a_3\frac{x_3}{x_6} > a_2\frac{x_5}{x_6}$. In this case we set $b_4 = b_6 = \frac{1}{2}a_2\frac{x_5}{x_6}$ which satisfies \eqref{ineq2}, \eqref{ineq3}, \eqref{ineq4}. Then from \eqref{as} we have $a_1x_6 \geq a_3x_3 + a_4x_2$, which means that \eqref{ineq1} is also satisfied.

Since the $a's$ were arbitrary and $b's$ are found which satisfy $\eqref{bs}$ this gives us that  $C\subset B$ as desired.

\section{Acknowledgments}
The authors acknowledge the support of the Joseph and Loretta Lopez Chair Professorship endowment,
Sanofi via the project ``Optimization and Simulation Approaches or Systems Pharmacology in the Pharmaceutical Industry"
and the NSF Grant \# 1107444 KI-Net ``Kinetic description of emerging challenges in multiscale problems of natural sciences".

\renewcommand{\refname}{REFERENCES}

\medskip
% The data information below will be filled by AIMS editorial staff
Received for publication June  2018.
\medskip
\end{document}